\documentclass[a4paper,reqno,11pt]{amsart}
\usepackage{amsfonts,amsmath,amsthm,amssymb,stmaryrd}
\usepackage{mathrsfs}

% \usepackage[pagewise]{lineno}\linenumbers
%geometry change
\usepackage[left=1 in, right=1 in,top=1 in, bottom=1 in]{geometry}

\allowdisplaybreaks

\numberwithin{equation}{section}
\usepackage{color}

\newtheorem{theorem}{Theorem}[section]
\newtheorem{lemma}[theorem]{Lemma}
\newtheorem{proposition}[theorem]{Proposition}

\newtheorem{remark}[theorem]{Remark}

\theoremstyle{definition}

\newcommand{\Div}{\operatorname{div}}
\newcommand{\curl}{\operatorname{curl}}

\renewcommand{\epsilon}{\varepsilon}

\providecommand{\abs}[1]{\left\vert#1\right\vert}
\providecommand{\norm}[1]{\left\Vert#1\right\Vert}

%fix for no mathbold package

%other macros

\providecommand{\norm}[1]{\left\Vert#1\right\Vert}

\def\dt{\partial_t}

\def\pa{\partial}

\def\RRvert2{\right \vert\! \right\vert}
\def\Lvert3{\left \vert\!\left\vert\!\left\vert}
\def\Rvert3{\right \vert\!\right\vert\!\right\vert}

\def\dt{\partial_t}

\def\hal{\frac{1}{2}}

\def\ls{\lesssim}

\def\a{\mathcal{A}}

\def\j{\mathcal{J}}

\def\fj1{\mathcal{J}^{-1}}

%More More macors

\def\bp{\partial_z}

\def\ak{{\bar{\mathcal{A}}}}

\makeindex

\title[FBP of MHD]{Well-posedness of axially symmetric incompressible ideal magnetohydrodynamic equations with vacuum under the non-collinearity condition}

\author[X. Gu]{Xumin Gu}
\address[X. Gu]{Department of Mathematics, Shanghai University of Finance and Economics;
Shanghai Center of Mathematical Sciences, People's Republic of China}
\email{gu.xumin@shufe.edu.cn}

%\date{}

\begin{document}

%\tableofcontents

\begin{abstract}
We consider a free boundary problem for the axially symmetric incompressible ideal magnetohydrodynamic equations that describes the motion of the plasma in vacuum. Both the plasma magnetic field and vacuum magnetic field are tangent along the plasma-vacuum interface. Moreover, the vacuum magnetic field is composed in a non-simply connected domain and hence is non-trivial. Under the non-collinearity condition on the free surface, we prove the local well-posedness of the problem in Sobolev spaces.
\end{abstract}

\maketitle

\section{Introduction}
\subsection{Eulerian formulation}
In this paper, we consider the free boundary problem of the axially symmetric incompressible ideal MHD equations:
\begin{equation}
  \label{mhd}
\begin{cases}
\partial_tu^r + (u^r \partial_r+u^z\partial_z) u^r-\dfrac{(u^{\theta})^2}{r} + \partial_r (P+\dfrac{1}{2}\abs{B}^2) = (B^r\partial_r+B^z\partial_z)B^r -\dfrac{(B^{\theta})^2}{r}&\text{in}\ \  \Omega(t),\\
\partial_tu^{\theta} + (u^r \partial_r+u^z\partial_z) u^{\theta}+\dfrac{u^{\theta}u^r}{r} = (B^r\partial_r+B^z\partial_z)B^{\theta} +\dfrac{B^{\theta}B^r}{r}&\text{in}\ \  \Omega(t),\\
\partial_tu^z + (u^r \partial_r+u^z\partial_z) u^z+ \partial_z (P+\dfrac{1}{2}\abs{B}^2) = (B^r\partial_r+B^z\partial_z)B^z &\text{in}\ \ \Omega(t),\\
\partial_tB^r +  (u^r \partial_r+u^z\partial_z) B^r = (B^r\partial_r+B^z\partial_z) u^r &\text{in}\ \  \Omega(t),\\
\partial_tB^{\theta} +  (u^r \partial_r+u^z\partial_z) B^{\theta}+\dfrac{B^ru^{\theta}}{r}= (B^r\partial_r+B^z\partial_z) u^{\theta}+\dfrac{u^rB^{\theta}}{r} &\text{in}\ \ \Omega(t),\\
\partial_tB^z +  (u^r \partial_r+u^z\partial_z) B^z = (B^r\partial_r+B^z\partial_z) u^z &\text{in}\ \ \Omega(t),\\
\partial_r u^r+\dfrac{u^r}{r}+\partial_z u^z =0 &\text{in}\ \ \Omega(t),\\
\partial_r B^r+\dfrac{B^r}{r}+\partial_z B^z =0 &\text{in}\ \ \Omega(t).
\end{cases}
\end{equation}
In the equation \eqref{mhd}, $u(t,x)=u^r(t,r,z)e_r+u^{\theta}(t,r,z)e_{\theta}+u^z(t,r,z)e_z$ is the Eulerian or spatial velocity field, $B=B^r(t,r,z)e_r+B^{\theta}(t,r,z)e_{\theta}+B^z(t,r,z)e_z$ is the magnet field, and $P$ denotes the pressure function of the fluid which occupies the moving vessel domain:
$$\Omega(t): \{(x_1, x_2, z)| 0\leq r< r(z, t), z\in \mathbb T\}.$$
Here $e_r=(\cos\theta,-\sin\theta,0), e_{\theta}=(-\sin\theta,\cos\theta,0), e_z=(0,0,1), r=\sqrt{x_1^2+x_2^2}, z=x_3$, $\theta=\arctan{\frac{x_2}{x_1}}$.
%\begin{subequations}
%  \label{mhd4}
%\begin{align}
%u_t + u \cdot \nabla u + \nabla (P+\dfrac{1}{2}\abs{B}^2) &= B\cdot\nabla B &\text{in}\ \  \Omega^+(t),\\
%B_t + u \cdot \nabla B&= B\cdot\nabla u &\text{in}\ \  \Omega^+(t),\label{Hequ4}\\
%\nabla \cdot u &=0 &\text{in}\ \  \Omega^+(t),\\
%\label{ma4g}\nabla \cdot B &=0 &\text{in}\ \  \Omega^+(t),\\
%  \nu (\Gamma(t)) &= u\cdot n,&\text{on}\ \  \Gamma(t),\\
%   (u,B) &= (u_0, B_0) &\text{in}\ \  \Omega^+(0).\label{53453}
%\end{align}
%\end{subequations}
We require the following boundary condition on the free surface $\Gamma(t):=\partial\Omega(t)$:
\begin{equation}
\label{kbd}
V(\Gamma(t))=u\cdot n \,\,\text{on}\,\, \Gamma(t)
\end{equation}
and
\begin{equation}
\label{hbd}
B\cdot n=0,\,\,P+\dfrac{1}{2}\abs{B}^2=\dfrac{1}{2}\dfrac{C^2(t)}{r^2}, \,\,\text{on}\,\,\Gamma(t).
\end{equation}
The equation \eqref{kbd} is called the kinematic boundary condition which states that the free surface $\Gamma(t)$ moves with the velocity of the fluid, where $V(\Gamma(t))$ denote the normal velocity of $\Gamma(t)$ and $n$ is the outward normal of the domain $\Omega(t)$. The first condition of the equation \eqref{hbd} means the fluid is perfect conductor. The second condition expresses the continuity of pressure on the free interface and $C(t)$ is given by
\begin{equation}
\label{cformula}
C(t)=C(0)e^{\int_0^tA(\tau)\,d\tau}, \,\,A(t)=\dfrac{\int_{\mathbb T}(u\cdot n)\sqrt{1+(\partial_z r(z,t))^2}\,dz}{\int_{\mathbb T}(\ln R_S-\ln r(z,t))\,dz},
\end{equation}
$R_S$ is a constant larger than $r(z,t)$.

The system \eqref{mhd}-\eqref{hbd} can be used to describe the motion of the plasma confined inside a rigid wall and isolated from it by vacuum, which is one of laboratory plasma model problems (see \cite[Chapter 4.6.1]{Go_04}). In the general setting, the plasma region $\Omega(t)$ is surrounded by the vacuum region $\Omega_v(t)$, and the moving plasma-vacuum interface $\Gamma(t)=\partial\Omega(t)$ does not intersect with the outer wall $\partial\Omega_w$, where $\Omega_w=\Omega(t)\cup \Gamma(t)\cup \Omega_v(t)$ is a fixed domain. When the characteristic plasma velocity is very small compared to the speed of sound, the motion of the plasma is governed by the incompressible ideal MHD in $\Omega(t)$, i.e., \eqref{mhd}. In the vacuum region $\Omega_v(t)$, we neglect the displacement current in the Maxwell equations as usual in the non-relativistic MHD and assume the pre-Maxwell equations:
\begin{equation}
\label{premax}
\begin{cases}
\curl \mathcal{B} =0,\quad \Div \mathcal{B} =0 &\text{in }\Omega_v(t),\\
\curl \mathcal{E} =-\dt \mathcal{B},\quad \Div \mathcal{E} =0&\text{in }\Omega_v(t).
\end{cases}
\end{equation}
In the equations \eqref{premax}, $\mathcal{B}$ and $\mathcal{E}$ denotes the magnetic and electric fields in vacuum, respectively. The motion of the plasma is connected with the vacuum through the jump condition on the interface $\Gamma(t)$:
\begin{equation*}
  \left(\big(P  + \dfrac{1}{2}|B|^2\big)I  - B\otimes B\right)n=\left(  \dfrac{1}{2}|\mathcal{B}|^2I  -\mathcal{B}\otimes\mathcal{ B}\right)n\quad\text{on } \Gamma(t)
\end{equation*}
and
\begin{equation}
\label{ebd}
 (B-\mathcal{B})\cdot n  = 0,\quad   (E-  \mathcal{E}) \times n=  -(u \cdot n) (B- \mathcal{B}) \quad\text{on } \Gamma(t).
\end{equation}
Here  $E$ is the electric field of the plasma, i.e.,
\begin{equation}
\label{efor}
E=-u\times B.
\end{equation}
 We also require the jump condition on $\partial\Omega_w$:
\begin{equation*}
 (\mathcal{B}-\hat{\mathcal{B}})\cdot \nu  = 0,\quad (\mathcal E-\hat{\mathcal E})\times \nu=0 \quad\text{on } \partial\Omega_w.
\end{equation*}
Here $\hat{\mathcal{B}}$ and $\hat{\mathcal E}$ denotes the magnetic and electric fields outside the wall $\partial\Omega_w$ and $\nu$ is the outward unit normal of $\partial\Omega_w$.

The well-posedness of the general plasma-vacuum interface problem is still an open question. In this paper we restrict to an axially symmetric case and $\Omega_w$ is a cylinder domain: $\{(x_1, x_2, z)|0\leq r< R_S, z\in \mathbb T\}$, $\pa\Omega_w:=\{(x_1,x_2, z)|r=R_S, z\in \mathbb T\}$ is a perfectly conducting wall and when the plasma is a perfect conductor. In this setting, we have the following boundary conditions
\begin{equation}
\label{B1}
\mathcal{B}\cdot \nu=0,\quad \mathcal{E}\times\nu=0, \quad\text{on } \partial\Omega_w
\end{equation}
and
\begin{equation}
\label{B2}
B\cdot n=\mathcal{B}\cdot n=0\quad \text{on } \Gamma(t).
\end{equation}

Now with the first equation of \eqref{B1} and the first equation of \eqref{B2}, we can derive a formula for the vacuum magnet filed $\mathcal B=\mathcal B^{r}(r,z,t)e_{r}+\mathcal B^{\theta}(r,z,t) e_{\theta}+\mathcal B^z(r,z,t)e_z$. In fact, we can transfer the first equation of \eqref{premax} and the first equation of \eqref{B1} into the two decoupled system:
\begin{equation*}
	\partial_r\mathcal B^{\theta}+\dfrac{\mathcal B^{\theta}}{r}=0,\,\,
\partial_z\mathcal B^{\theta}=0,
\end{equation*}
with no boundary condition and
\begin{equation*}
\begin{cases}
\partial_r\mathcal B^r+\dfrac{1}{r}\mathcal B^r+\partial_z \mathcal B^z=0,\\
\partial_r\mathcal B^r-\partial_z\mathcal B^z=0,
\end{cases}
\end{equation*}
with boundary condition
\begin{equation*}
\begin{split}
\mathcal B^r+\mathcal B^z\partial_z r=0, \,\,&\text{on}\,\, \Gamma(t): r=r(z,t),\\
\mathcal B^r=0, \,\,&\text{on}\,\, r=R_S.
\end{split}
\end{equation*}
Then we see
\begin{equation}
\mathcal B^r=\mathcal B^z=0, \mathcal B^{\theta}=\dfrac{C(t)}{r}
\label{vab}
\end{equation} are solutions to the above two systems.
In order to determine $C(t)$, we consider the elliptic system of the vacuum electronic field:
\begin{equation*}
\begin{cases}
\nabla\times \mathcal E=-\dfrac{C'(t)}{r},\,\,&\text{in}\,\,\Omega_v(t),\\
\nabla\cdot \mathcal E=0, \,\,&\text{in}\,\,\Omega_v(t),\\
n\times \mathcal E= (u\cdot n)\dfrac{C(t)}{r} \,\,&\text{on}\,\, \Gamma(t), \\\nu\times \mathcal E=0 \,\,&\text{on}\,\, \pa \Omega_w.
\end{cases}
\end{equation*}
The third equation is obtained by using the first equation of \eqref{ebd} and the equation \eqref{efor}.
%\begin{equation}
%\label{edbd}
%n\times\mathcal E=(u\cdot n)\mathcal B.
%\end{equation}
Now by using integration by parts, we have
\begin{equation*}
\int_{\Omega^-}\nabla \times \mathcal E\cdot \mathcal B\,dV=-\int_{\Gamma}n\times \mathcal E\cdot \mathcal B\,d\sigma,
\end{equation*}
where we used $\nabla \times \mathcal B=0$ in $\Omega_v(t)$ and $\nu\times \mathcal E=0$ on $\pa \Omega_w$. Thus, we have
\begin{equation*}
C'(t)\int_{\Omega^-}\dfrac{1}{r^2}\,dV=C(t)\int_{\Gamma}\dfrac{(u\cdot n)}{r}\,d\sigma
\end{equation*}
which gives
\begin{equation*}
C(t)=C(0)e^{\int_0^tA(\tau)\,d\tau}, \,\,A(t)=\dfrac{\int_{\Gamma}\dfrac{(u\cdot n)}{r}\,d\sigma}{\int_{\Omega^-}\dfrac{1}{r^2}\,dV}=\dfrac{\int_{\mathbb T}(u\cdot n)\sqrt{1+(\partial_z r(z,t))^2}\,dz}{\int_{\mathbb T}(\ln R_S-\ln r(z,t))\,dz}.
\end{equation*}

Hence the plasma-vacuum interface problem reduces to the free boundary problem \eqref{mhd}--\eqref{hbd}. Our purpose of this paper is to establishing the local well-posedness for this problem.
\begin{remark}
If we set $C(0)=0$ in the formula \eqref{cformula}, then we have $\mathcal B=0$ for all time. With this trivial vacuum magnet field, the local well-posedness was proved in \cite{GW_16} without axially symmetric assumption. In fact, if the vacuum domain $\Omega_v$ is simply connected, we only have trivial magnet field. It is interesting to study the non-trivial vacuum magnet field and its interaction with plasma magnet field, this is also the main reason for us to consider the problem with a non-simply-connected vacuum domain in this paper.
\end{remark}
\subsection{Lagrangian reformulation}
We tranform the Eulerian problem \eqref{mhd}-\eqref{hbd} on the moving domain $\Omega(t)$ to be one on the fixed domain $\Omega$ by the use of Lagrangian coordinates.
Let $x\in\Omega$ be the Lagrangian coordinate and $\eta(x,t)$ be the Eulerian coordinate, which means $\eta(x,t)\in\Omega(t)$ denote the "position" of the fluid particle $x$ at $t$. Thus,
 $$\partial_t\eta(x,t)=u(\eta(x,t),t)\,\,\text{for}\,\, t>0,\,\,\eta(x,0)=x.$$
% Then the deformation tensor $F$ is written as $F_{ij}=\partial_{j}\eta^i(x,t)$. We denote $v(x,t)=u(\eta(x,t),t)$, $p(x,t)=P(\eta(x,t),t)$, $b(x,t)=B(\eta(x,t),t)$ and $q(x,t)=p(x,t)+\frac{1}{2}|\tilde b|^2$.
Now we denote $R(x,t)=\sqrt{(\eta_1)^2+(\eta_2)^2}, \Theta(x,t)=\arctan \dfrac{\eta_2}{\eta_1}, Z(x,t)=\eta_3$, and $r=\sqrt{(x_1)^2+(x_2)^2}$, $\theta=\arctan\dfrac{x_2}{x_1}$, $z=x_3$. Then we can derive
\begin{equation}
	\label{rst}
	\begin{cases}
&\partial_tR=u^r\left(R(x,t),Z(x,t),t\right),\\
&\partial_t\Theta=\dfrac{1}{R(x,t)}u^{\theta}\left(R(x,t),Z(x,t),t\right),\\
&\partial_tZ=u^z\left(R(x,t),Z(x,t),t\right).
\end{cases}
\end{equation}
and
\begin{equation}
R(x,0)=r, \Theta(x,0)=\theta, Z(x,0)=z.
	\label{inz}
\end{equation}
From the first and third equation of \eqref{rst} and the initial data \eqref{inz}, we have
\begin{equation}
	\label{rzRz}
	R(x,t)=R(r,z,t), \quad Z(x,t)=Z(r,z,t).
\end{equation}
And for $\Theta$, we have \begin{equation}
	\label{dtheta}
	\Theta(r,\theta, z, t)=\theta+\int_0^t\dfrac{u^{\theta}(R,Z,\tau)}{R}\,d\tau=\theta+\hat\Theta(r,z,t).
\end{equation}
Now we denote
\begin{equation}
	\label{deflag}
	\begin{cases}
	&v^r(r,z,t)=u^r\left(R, Z, t\right), v^{\theta}(r,z,t)=u^{\theta}\left(R, Z, t\right), v^z(r,z,t)=u^z\left(R, Z, t\right),\\
	&b^r(r,z,t)=B^r\left(R, Z, t\right), b^{\theta}(r,z,t)=B^{\theta}\left(R, Z, t\right), b^z(r,z,t)=B^z\left(R, Z, t\right),\\&q(r,z,t)=P\left(R, Z, t\right)+\dfrac{1}{2}|B|^2\left(R, Z, t\right),
\end{cases}
\end{equation}
and denote the deformation tensor between $(R, Z)$ and $(r, z)$ as $\mathcal{F}_{ij}=\partial_{a_j}\zeta^i(r,z,t)$, where $\zeta=(R, Z), a=(r, z)$, (e.g. $\mathcal F_{11}=\partial_r R$).
Then we have the following Lagrangian version of \eqref{mhd} in the fixed reference domain $\Omega$:
\begin{equation}
	\label{eq:mhdo}
\begin{cases}
	\partial_t v^r-\dfrac{(v^{\theta})^2}{R}+\pa^\a_{R} q
=(b^r\pa^\a_R+b^z\pa^\a_Z)b^r-\dfrac{(b^{\theta})^2}{R}\,\,&\text{in}\,\,\Omega,\\
\partial_t v^{\theta}+\dfrac{v^{\theta}v^r}{R}
=(b^r\pa^\a_R+b^z\pa^\a_Z)b^{\theta}+\dfrac{b^rb^{\theta}}{R}\,\,&\text{in}\,\,\Omega,\\
\partial_t v^z+\pa^\a_Z q
=(b^r\pa^\a_R+b^z\pa^\a_Z)b^z\,\,&\text{in}\,\,\Omega,\\
\partial_tb^r =(b^r\pa^\a_R+b^z\pa^\a_Z) v^r\,\,&\text{in}\,\,\Omega,\\
\partial_tb^{\theta} +\dfrac{b^rv^{\theta}}{R}=(b^r\pa^\a_R+b^z\pa^\a_Z) v^{\theta}+\dfrac{v^rb^{\theta}}{R}\,\,&\text{in}\,\,\Omega,\\
\partial_tb^z =(b^r\pa^\a_R+b^z\pa^\a_Z) v^z\,\,&\text{in}\,\,\Omega,\\
\pa^\a_R(Rv^r)+\pa^\a_Z(Rv^z)=0\,\,&\text{in}\,\,\Omega,\\
\pa^\a_R(Rb^r)+\pa^\a_Z(Rb^z)=0\,\,&\text{in}\,\,\Omega,\\
 (f ,v, b)|_{t=0}=(\text{Id}, v_0, b_0).
\end{cases}
\end{equation}
where $\a=\mathcal{F}^{-T}$, $\pa^\a_{\zeta^i}:=\a_{ij}\partial_{a_j}$.
Here we donote
	\begin{equation}
		\Omega:=\{(x_1,x_2,x_3)|(x_1)^2+(x_2)^2< R_0, x_3\in \mathbb T\}
		\label{domain}
	\end{equation}
Two dynamic boundary conditions become:
\begin{equation*}
\begin{cases}
	q=\dfrac{1}{2}\dfrac{C^2(t)}{R^2}\,\,&\text{on}\,\,\Gamma\times(0,T],\\
(b^r,b^z) \a N =0\,\,&\text{on}\,\,\Gamma\times(0,T]\label{hebd}
\end{cases}
\end{equation*}
where $\Gamma:=\{(x_1,x_2, z)|(x_1)^2+(x_2)^2= R_0, z\in \mathbb T\}$, $N=(1,0)$ and
$$C(t)=C(0)e^{\int_0^t A(\tau)\,d\tau},
	A(t)=\dfrac{\int_{\mathbb T}\left(v^r\partial_z Z(R_0,z,t)-v^z\partial_z R(R_0,z,t)\right)\,dz}{\int_{\mathbb T}\left(\ln R_S-\ln R(R_0,z,t)\right)\partial_z Z\,dz}.$$

Now we follow the idea used in \cite{GW_16} to transfer the system \eqref{eq:mhdo} to a free-surface incompressible Euler system with a forcing term induced by the flow map. In fact, it can be checked directly that
\begin{equation*}
	\partial_t\left((b^r,b^z)\a\right)=0.
\end{equation*}
Then we have
\begin{equation}
	\label{hformula}
	\begin{cases}
	b^r=(b_0^r\partial_r+b_0^z\partial_z) R,\\
 %b^{\theta}=Rb_0\cdot\nabla \Theta,\\
	b^z=(b_0^r\partial_r+b_0^z\partial_z) Z.
 \end{cases}
\end{equation}
With this equality, it can be checked that if $\partial_r(rb_0^r)+\partial_z(rb_0^z)=0$ in $\Omega$ and $b_0^r=0$ on $\Gamma$, then the condition $\pa^\a_R(Rb^r)+\pa^\a_Z(Rb^z)=0$ and the second condition of \eqref{hebd} are satiesfied naturally. Then we plug \eqref{hformula} into the equation for $b^{\theta}$, we have
\begin{equation*}
	\partial_t b^{\theta}-\dfrac{v^rb^{\theta}}{R}=(b_0^r\partial_r+b_0^z\partial_z)v^{\theta}-\dfrac{v^{\theta}(b_0^r\partial_r+b_0^z\partial_z)R}{R}
\end{equation*}
Using \eqref{rst}, we have
\begin{equation*}
\partial_t\left(\dfrac{b^{\theta}}{R}\right)=\partial_t\left(\left(b_0^r\partial_r+b_0^z\partial_z \right)\Theta\right)
\end{equation*}
and hence
\begin{equation}
	\label{hfor2}
b^{\theta}=R(b_0^r\partial_r+b_0^z\partial_z)\Theta+\dfrac{Rb_0^{\theta}}{r}.
\end{equation}
Thus, with \eqref{hformula} and \eqref{hfor2}, we arrive at:
\begin{equation}
	\label{eq:mhd}
\begin{cases}
	\partial_t R= v^r\,\,&\text{in}\,\,\Omega,\\
		\partial_t Z = v^z\,\,&\text{in}\,\,\Omega,\\
\partial_t v^r-\dfrac{(v^{\theta})^2}{R}+\pa^\a_R q
=(b_0\cdot\nabla)^2R-R(b_0\cdot\nabla \Theta)^2\,\,&\text{in}\,\,\Omega,\\
\partial_t v^z+\pa^\a_Z q
=(b_0\cdot\nabla)^2Z\,\,&\text{in}\,\,\Omega,\\
\pa^\a_Rv^r+\dfrac{v^r}{R}+\pa^\a_Zv^z=0\,\,&\text{in}\,\,\Omega,\\
\partial_t \Theta = \dfrac{v^{\theta}}{R}\,\,&\text{in}\,\,\Omega,\\
\partial_t v^{\theta}+\dfrac{v^{\theta}v^r}{R}
=(b_0\cdot\nabla)(Rb_0\cdot\nabla\Theta)+b_0\cdot\nabla Rb_0\cdot\nabla\Theta\,\,&\text{in}\,\,\Omega,\\
(R,\Theta, Z, v)|_{t=0}=(r, \theta, z, u_0).
\end{cases}
\end{equation}
with boundary condition:
\begin{equation}
	q=\dfrac{C^2(t)}{R^2}\quad \text{on } \Gamma, C(t)=C(0)e^{\int_0^t }A(\tau)\,d\tau
	\label{qbdd}
\end{equation}
where $b_0\cdot\nabla=b_0^r\partial_r+b_0^z\partial_z+\dfrac{1}{r}b_0^{\theta}\partial_{\theta}$, $A(t)=\dfrac{\int_{\mathbb T}\left(v^r\partial_z Z-v^z\partial_z R(R_0,z,t)\right)\,dz}{\int_{\mathbb T}\left(\ln R_S-\ln R(R_0,z,t)\right)\partial_zZ\,dz}.$
In the system \eqref{eq:mhd}, the initial magnet field $b_0$ can be regarded as a parameter vector that satisfies
\begin{equation}
\label{bcond}
\partial_rb_0^r+\dfrac{1}{r}b_0^r+\partial_zb_0^z=0\,\,\text{in}\,\, \Omega \,\,\text{and}\,\, b_0^r=0  \,\,\text{on}\,\, \Gamma.
\end{equation}
\subsection{Previous works}
 Free boundary problems in fluid mechanics have important physical background and have been studied intensively in the mathematical community. There are a huge amount of mathematical works, and we only mention briefly some of them below that are closely related to the present work,  that is, those of the incompressible Euler equations and the related ideal MHD models.

For the incompressible Euler equations, the early works were focused on the irrotational fluids, which began
with the pioneering work of Nalimov \cite{N} of the local well-posedness for the small initial data and was generalized to the
general initial data by the breakthrough of Wu \cite{Wu1,Wu2} (see also Lannes \cite{Lannes}). For the irrotational inviscid fluids, certain dispersive effects can be used to establish the global well-posedness for the small initial data; we refer to  Wu \cite{Wu3,Wu4}, Germain, Masmoudi and Shatah \cite{GMS1, GMS2}, Ionescu and Pusateri \cite{IP,IP2} and Alazard
and Delort \cite{AD}. For the general incompressible Euler equations, the first local well-posedness in 3D was obtained by Lindblad \cite{Lindblad05} for the case without surface tension (see Christodoulou and Lindblad \cite{CL_00} for the a priori estimates) and
by Coutand and Shkoller \cite{CS07} for the case with (and without) surface tension. We also refer to the results of Shatah and Zeng \cite{SZ} and Zhang and Zhang
\cite{ZZ}. Recently, the well-posedness in conormal Sobolev spaces can be found by the the inviscid limit of the free-surface incompressible Navier-Stokes equations, see Masmoudi and Rousset \cite{MasRou} and  Wang and Xin \cite{Wang_15}.

The study of free boundary problems for the ideal MHD models seems far from being complete; it attracts many research interests, but up to now only few well-posedness theory for the nonlinear problem could be found. For the plasma-vacuum interface model that a surface current $J$ is added as an outer force term to the vacuum pre-Maxwell system \eqref{premax}, with the non-collinearity condition holding for two magnet fields on the boundary:
\begin{equation}
\label{st}
\abs{B\times \mathcal B}>0\,\,\text{on}\,\,\Gamma(t),
\end{equation} the well-posedness
of the nonlinear compressible problem was proved in Secchi and Trakhinin \cite{Secchi_14} by
the Nash-Moser iteration based on the previous results on the linearized problem \cite{Trak_10,Secchi_13}. The well-posedness of the linearized incompressible problem was proved by Morando, Trakhinin and Trebeschi \cite{Mo_14}, the nonlinear incompressible problem was solved by Sun, Wang and Zhang \cite{Sun_17} very recently.  On the other hand, Hao and Luo \cite{Hao_13} established
a priori estimates for the incompressible plasma-vacuum interface problem under the Taylor’s sign condition:
\begin{equation}
\label{taylor}
\dfrac{\partial\left(P+\dfrac{1}{2}\abs{B}^2-\dfrac{1}{2}\abs{\mathcal B}^2\right)}{\partial{n}}\leq -\epsilon<0\,\,\text{on}\,\,\Gamma(t),
\end{equation}
under the assumption that the strength of the magnetic field is constant on the
free surface by adopting a geometrical point of view \cite{CL_00}. Recently, Gu and Wang proved the well-posedness of the incompressible plasma-vacuum problem under \eqref{taylor} with the vacuum magnet field is zero and the well-posedness of the axially symmertic ideal MHD equation \eqref{mhd} under \eqref{taylor} will be addressed in the forth coming paper. However, without axially symmetric assumption, the well-posedness of the plasma-vacuum interface problem under \eqref{taylor} is still unknown when the vacuum magnetic field $\mathcal B$ is non trivial. Finally, we also mention some works about the current-vortex sheet problem, which describes a velocity and magnet field discontinuity in two ideal MHD flows. The nonlinear stability of compressible current-vortex sheets was solved independently by Chen and Wang \cite{Chen_08} and Trakinin \cite{Trak_09} by using the Nash-Moser iteration. For incompressible current-vortex sheets, Coulombel, Morando, Secchi and Trebeschi \cite{CMST} proved an a priori estimate for the nonlinear problem under a strong stability condition, and  Sun, Wang and Zhang \cite{Sun_15} solved the nonlinear stability.

\section{Main Result}
Before stating our results of this paper, we may refer the readers to our notations and conveniences in Section \ref{Notation}.

We define the higher order energy functional
\begin{equation}
\label{edef}
\mathfrak{E}(t)=\norm{(v^r, v^{\theta}, v^z)}_4^2+\norm{(R, Z)}_4^2+\norm{(b_0\cdot\nabla R, Rb_0\cdot\nabla \Theta, b_0\cdot\nabla Z)}_4^2
\end{equation}
Then the main result in this paper is stated as follows.
\begin{theorem}\label{mainthm}
Suppose that the initial data $ (v_0^r, v_0^{\theta}, v_0^z) \in H^4_{r,z}(\Omega)$ with $v^r_0(0,z)=v^{\theta}_0(0,z)=0$ and $\partial_rv^r_0+\frac{1}{r}v^r_0+\partial_zv^z_0=0$ and $(b_0^r, b_0^{\theta}, b_0^z) \in H^4_{r,z}(\Omega)$ satisfies \eqref{bcond} and that
\begin{equation}
\label{stc}
\abs{b_0^z}\geq\delta>0\,\,\text{on}\,\,\Gamma
\end{equation}
 holds initially. Then there exists a $T_0>0$ and a unique solution $(v^r, v^{\theta}, v^z, q, R, \Theta, Z)$ to  \eqref{eq:mhd} on the time interval $[0, T_0]$ which satisfies
\begin{equation}\label{enesti}
\sup_{t \in [0,T_0]} \mathfrak E(t) \leq P\left(\norm{\left(v_0^r, v_0^{\theta}, v_0^z\right)}_4^2+\norm{\left(b_0^r, b_0^{\theta}, b_0^z\right)}_4^2\right),
\end{equation}
where $P$ is a generic polynomial.

\end{theorem}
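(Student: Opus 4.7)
The plan is to establish Theorem \ref{mainthm} via a standard scheme: construct approximate solutions by a nonlinear regularization (for instance tangential smoothing in the spirit of \cite{GW_16}), derive a priori $H^4$ energy estimates uniform in the regularization parameter $\kappa$, and then pass to the limit and prove uniqueness. Because \eqref{eq:mhd} is already written in the favourable frozen-field Lagrangian form (the magnetic force appears as $(b_0\cdot\nabla)^2\zeta$), the difficulty is concentrated in the a priori estimates, and specifically in how the nonlocal boundary condition \eqref{qbdd} interacts with the non-collinearity hypothesis \eqref{stc}.

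The central object is the functional $\mathfrak E(t)$. The norms $\norm{(R,Z)}_4$ and $\norm{(b_0\cdot\nabla R,Rb_0\cdot\nabla\Theta,b_0\cdot\nabla Z)}_4$ are propagated from $\norm{v}_4$ by the evolutions $\partial_tR=v^r$, $\partial_tZ=v^z$, $\partial_t\Theta=v^\theta/R$ combined with the magnetic representation formulas \eqref{hformula}--\eqref{hfor2}, so the essential task is to control $\norm{v}_4$. I would apply tangential derivatives $\partial^\alpha$ with $|\alpha|\le 4$ (combinations of $\partial_t,\partial_\theta,\partial_z$) to the momentum equations, take the $L^2$ inner product with $\partial^\alpha v$, and integrate by parts. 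Using $b_0^r|_\Gamma=0$ and the divergence condition \eqref{bcond}, the magnetic forcing produces, modulo lower-order commutators, the cancellation
\[
\int(b_0\cdot\nabla)^2\partial^\alpha\zeta\cdot\partial^\alpha v\,dx=-\tfrac12\tfrac{d}{dt}\int|b_0\cdot\nabla\partial^\alpha\zeta|^2\,dx,
\]
whose positivity is the ``magnetic tension'' energy that replaces the Rayleigh--Taylor sign \eqref{taylor} absent from our assumptions.

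The boundary contribution from the pressure, $\int_\Gamma q\,\mathcal AN\cdot\partial^\alpha v\,d\sigma$, is where the non-collinearity \eqref{stc} intervenes. Since $b_0^r|_\Gamma=0$ and $|b_0^z|\ge\delta$, on $\Gamma=\{r=R_0\}$ the operator $b_0\cdot\nabla$ is $b_0^z\partial_z+r^{-1}b_0^\theta\partial_\theta$, in which $\partial_z$ has a non-degenerate coefficient; thus any top-order $\partial_z$ of an unknown evaluated on $\Gamma$ can be rewritten as $(b_0^z)^{-1}(b_0\cdot\nabla)$ of that quantity plus lower-order terms, and the latter is controlled by the magnetic tension energy identified above. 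This is the axisymmetric analogue of the non-collinearity mechanism used in \cite{Secchi_14,Sun_17}. Once the tangential estimates are closed, normal derivatives are recovered from the incompressibility constraint in \eqref{eq:mhd} and from the vorticity equation obtained by taking $\curl$ of the momentum equations; since the curl of the forcing $(b_0\cdot\nabla)^2\zeta$ is controlled, $\curl v$ is again estimated by $\mathfrak E$.

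Two places are likely to be delicate. One is the elliptic problem for $q$: taking the $\mathcal A$-divergence of the momentum equation and using the constraint, $q$ solves a variable-coefficient second-order elliptic equation in divergence form with the nonlocal time-dependent Dirichlet datum $C^2(t)/R^2$. Control of $q$ in $H^4$ therefore requires $H^4$ control of $R|_\Gamma$ together with propagation bounds for $C(t),C'(t)$; fortunately $A(t)$ in \eqref{qbdd} has denominator bounded below so long as $R|_\Gamma$ stays away from $R_S$ and numerator at most linear in $v$, so $C$ remains comparable to $C(0)$ on a short time interval. The principal obstacle, in my view, is closing the top-order boundary integral produced by this nonlocal datum, namely $\int_\Gamma\partial^\alpha(C^2/R^2)\,\mathcal AN\cdot\partial^\alpha v\,d\sigma$: one must show it can be absorbed into the magnetic tension energy using the non-collinearity. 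The strategy is to rewrite every top-order tangential derivative on the boundary as $(b_0^z)^{-1}(b_0\cdot\nabla)$ acting on a lower-order quantity, integrate by parts along $\Gamma$, and convert remaining $\partial_t R$-type factors into $v^r$ via \eqref{eq:mhd}; after these manipulations the boundary term splits into an exact time derivative plus a remainder bounded by $P(\mathfrak E)$. A Gr\"onwall argument together with a standard contraction/compactness scheme then yields local existence, uniqueness and \eqref{enesti}.
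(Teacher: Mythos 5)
Your overall architecture agrees with the paper's: split off the $(v^{\theta},\Theta)$ equations, run tangential energy estimates plus div--curl recovery of normal derivatives, use the magnetic tension energy $\frac{d}{dt}\int|b_0\cdot\nabla\partial^{\alpha}\zeta|^2$ in place of a Taylor sign, exploit $|b_0^z|\geq\delta$ on $\Gamma$ to trade a top-order $\partial_z$ on the boundary for $(b_0^z)^{-1}b_0\cdot\nabla$ (the paper's estimate $\abs{\bp^4\zeta}_{1/2}\ls\norm{b_0\cdot\nabla\zeta}_4$, used through an $(H^{-1/2},H^{1/2})$ duality), and control $q$ by the elliptic problem with the nonlocal Dirichlet datum $C^2(t)/R^2$. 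The construction differs only in packaging: the paper builds solutions by a linearization iteration (systems \eqref{sub21}--\eqref{sub22}) solved via an artificial viscosity $\epsilon(b_0\cdot\nabla)^2\zeta$, rather than a nonlinear tangential smoothing, and then contracts the iterates in $H^3_{r,z}$.

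There is, however, one genuine gap. When you apply $\partial^{\alpha}$ with $|\alpha|=4$ to the momentum equation and pair with $\partial^{\alpha}\nu$, the interior pressure term produces $\int_{\Omega}\partial^{\alpha}(\a_{ij}\partial_{a_j}q)\,\partial^{\alpha}\nu_i$, whose leading commutator contribution is of the form $\partial^{\alpha}\a_{ij}\,\partial_{a_j}q\sim \nabla\partial^{4}\zeta\cdot\nabla q$. This contains \emph{five} derivatives of $\zeta$, while $\mathfrak{E}$ only controls $\norm{\zeta}_4$ and $\norm{b_0\cdot\nabla\zeta}_4$; no integration by parts of the naive kind removes this loss, and your proposal never addresses it (you discuss only the boundary contribution of the pressure). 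The paper's resolution is Alinhac's good unknowns $\mathcal V=\bp^4\nu-\bp^4\zeta\cdot\nabla_{\a}\nu$ and $\mathcal Q=\bp^4 q-\bp^4\zeta\cdot\nabla_{\a}q$: rewriting $\bp^4(\pa^{\a}_{\zeta_i}g)=\pa^{\a}_{\zeta_i}(\bp^4g-\bp^4\zeta\cdot\nabla_{\a}g)+\mathcal C_i(g)$ cancels the top-order $\zeta$-term exactly and leaves commutators $\mathcal C_i(g)$ bounded by $P(\norm{\zeta}_4)\norm{g}_4$. Without this device (or an equivalent, e.g.\ a careful symmetrization of the $\partial^{4}\a\,\nabla q$ term), the tangential estimate does not close, so this step must be added for the argument to be complete. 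A second, smaller caveat: your suggestion to close the boundary integral by integrating by parts along $\Gamma$ and extracting an exact time derivative is not needed once the good unknowns are in place --- the duality pairing $\abs{\mathcal Q}_{1/2}\abs{\mathcal V}_{-1/2}$ combined with the non-collinearity trace gain suffices, and is what the paper does.
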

\begin{remark}
Recall the vacuum magnet field formula \eqref{vab},  the inequality \eqref{stc} is actually the non-collinearity condition \eqref{st} under our axially symmetric settings.
\end{remark}
\subsection{Strategy of the proof}
The strategy of proving the local well-posedness for the inviscid free boundary problems consists of three main parts: the a priori estimates in certain energy functional spaces, a suitable approximate problem which is asymptotically consistent with the a priori estimates, and the construction of solutions to the approximate problem.  For the incompressible MHD equations \eqref{eq:mhd}, we derive our a priori estimates in the following way. First, we divide \eqref{eq:mhd} into two sub-systems: one is for $(v^r, v^z, q, R, Z)$, the other one is for $(v^{\theta}, \Theta)$ (see \eqref{sub1} and \eqref{sub2}). The a priori estimates for $(v^{\theta}, \Theta)$ can be obtained by standard energy method. This is because there is no pressure in this subsystem and no boundary integral needs to be considered. Here, one will meet the difficulty to deal with the singularity brought by the cylinder coordinates, i.e. the estimates of $\dfrac{v^{\theta}}{r}$. Hence, we will apply the high order Hardy inequality to control these terms. On the other hand, the estimates for $(v^r, v^z, q, R, Z)$ is more complicated. We shall use tangential energy estimates combining with divergence and curl estimates to close the a priori estimates of $(\nu, q, \zeta)$, where we denote $\nu=(v^r, v^z), \zeta=(R, Z)$. During this process, there are several difficulties to deal with. In the usual derivation of the a priori tangential energy estimates of \eqref{sub1} in the $H^4_{r,z}$ setting, one deduces
\begin{equation}
\begin{split}
&\hal\dfrac{d}{dt} \int_{\Omega}\abs{\bp^4 \nu}^2+\abs{\bp^4 ( b_0\cdot\nabla\zeta)}^2+\underbrace{\int_{\Gamma}-\a_{mj}\partial_{a_j} q\bp^4{\zeta}_m\bp^4 \nu_i{\mathcal A}_{i1}+\bp^4q\bp^4\nu_i\a_{i1}}_{\mathcal{I}_b}
\\&\quad \approx \underbrace{\int_\Omega \bp^4 D\zeta \bp^4  \zeta+ \bp^4 \nabla\zeta\bp^4  q}_{\mathcal{R}_Q}+l.o.t.,
\end{split}
\end{equation}
The first difficulty one will meet is the loss of derivatives in estimating $\mathcal{R}_Q$ (by recalling the energy functional $\mathfrak{E}(t)$ defined by \eqref{edef}).  Our idea to overcome this difficulty is, motivated by \cite{MasRou,Wang_15, GW_16}, to use Alinhac's good unknowns $
\mathcal{V} =\bp^4 \nu- \bp^4\zeta \cdot\nabla_\a \nu$ and $ \mathcal{Q} =\bp^4 q- \bp^4\zeta \cdot\nabla_\a q$, which derives a crucial cancellation observed by Alinhac \cite{Alinhac}, i.e., when considering the equations for $\mathcal{V} $ and $Q$, the term $\mathcal{R}_Q$ disappears. The second difficulty is to estimate the boundary integral $\mathcal{I}_b$. Recalling the boundary condition for $q$, one needs to control $\abs{\bp^4\zeta}_{1/2}$, which means a loss of derivatives again. To overcome this difficulty, we use the following important observation: with the non-collinearity condition \eqref{stc} and boundary condition \eqref{bcond}, one can have
$$\abs{\bp^4\zeta}_{1/2}\ls \abs{\dfrac{\bp^3(b_0^z\bp\zeta)}{b_0^z}}_{1/2}\ls \norm{b_0\cdot\nabla\zeta}_4.$$
This means the non-collinearity condition \eqref{stc} can actually improve one order boundary regularity, which plays a big role here. Hence, the boundary integral $\mathcal{I}_b$ now can be estimated by using $(H^{-1/2}, H^{1/2})$ dual estimate and the tangential energy estimates can be finished. Doing the divergence and curl estimates is somehow standard and combining with the tangential energy estimates, we can close the a priori estimates.

After we obtaining the a priori estimates, we use linearization method to construct approximate system to \eqref{eq:mhd}. Again, we have two linearized sub-systems: \eqref{sub21} and \eqref{sub22}. Thanks to the boundary smoothing effect of non-collinearity condition \eqref{stc}, we can avoid losing derivatives on the boundary estimates in the linearization iteration, which means the approximate system is asymptotically consistent with the a priori estimates for the original system. Then by a contraction argument, the solutions to \eqref{eq:mhd} can be obtained based on the approximated solutions to the linearized system.  What now remains in the proof of the local well-posedness of \eqref{eq:mhd} is to constructing solutions to the linearized approximate problem \eqref{sub21} and \eqref{sub22}. This solvability can be obtained by the viscosity vanishing method used in \cite[Section 5.1]{GW_16}. Consequently, the construction of solutions to the incompressible MHD equations \eqref{eq:mhd} is completed.
\section{Preliminary}
{\subsection{Notation}\label{Notation}
Einstein's summation convention is used throughout the paper, and repeated
Latin indices $i,j,$ etc., are summed from 1 to 2.
We use $C$ to denote generic constants, which only depends on the domain $\Omega$ and the boundary $\Gamma$, and  use $f\ls g$ to denote $f\leq Cg$.  We use $P$ to denote a generic polynomial function of its arguments, and the polynomial coefficients are generic constants $C$.
We use $D$ to denote the spatial derives: $\partial_r, \bp$.
%\begin{equation*}
%D=(\dfrac{\partial}{\partial x_1},\dfrac{\partial}{\partial x_2},\dfrac{\partial}{\partial x_3}),
%\end{equation*}
%and use $\bar{\partial}$ to denote the tangential derivatives $\bar{\partial}=(\dfrac{\partial}{\partial x_1},\dfrac{\partial}{\partial x_2})$. We also denote the gradient vector in the Eulerian coordinates as $D_{\eta^i}=A_{ik}\partial_k$, $i=1,2,3$.

%We use $\Delta$ to denote the three dimensional Laplace operator in $\Omega$ and $\Delta_{*}$ to denote two dimensional Laplace operator on $\Gamma$:
%$$\Delta=\partial_1^2+\partial_2^2+\partial_3^2,\,\, \Delta_{*}=\partial_1^2+\partial_2^2.$$

\subsubsection{Sobolev spaces}
For integers $k\geq0$, we define the axially symmetic Sobolev space $H^k_{r,z}(\Omega)$ to be the completion of the functions in $C^{\infty}(\bar{\Omega})$ in the norm
\begin{equation*}
	\|u\|_k:=\left(\sum_{|\alpha|\leq k}2\pi\int_{\mathbb T}\int_{0}^{R_0}r\abs{D^{\alpha}u(r,z)}^2\,drdz\right)^{1/2}
\end{equation*}
for a multi-index $\alpha\in \mathbb{Z}_{+}^2$. For real numbers $s\geq0$, the Sobolev spaces $H^s_{r,z}(\Omega)$ are defined by interpolation.

\noindent
On the boundary $\Gamma$, for functions $w\in H^k(\Gamma)$, $k\geq0$, we set
\begin{equation*}
	|w|_k:=\left(\sum_{\beta\leq k}2\pi R_0\int_{\mathbb T} \abs{\partial_z^{\beta}w(R_0,z)}^2\,dz\right)^{1/2}
\end{equation*}
for a multi-index $\beta\in\mathbb{Z}_+$. The real number $s\geq0$ Sobolev space $H^s(\Gamma)$ is defined by interpolation. The negative-order Sobolev spaces $H^{-s}(\Gamma)$ are defined via duality: for real $s\geq0$, $H^{-s}(\Gamma):=[H^s(\Gamma)]'$.
\subsection{Product and commutator estimates}

We recall the following product and commutator estimates.
\begin{lemma}
It holds that

\noindent $(i)$ For $|\alpha|=k\geq 0$,
\begin{equation}
\label{co0}
\norm{D^{\alpha}(gh)}_0 \ls \norm{g}_{k}\norm{h}_{[\frac{k}{2}]+2}+\norm{g}_{[\frac{k}{2}]+2}\norm{h}_{k}.
\end{equation}
$(ii)$ For $|\alpha|=k\geq 1$, we define the commutator
\begin{equation*}
[D^{\alpha}, g]h =D^{\alpha}(gh)-gD^{\alpha} h.
\end{equation*}
Then we have
\begin{align}
\label{co1}
&\norm{[D^{\alpha}, g]h}_0\ls\norm{D g}_{k-1}\norm{h}_{[\frac{k-1}{2}]+2}+\norm{D g}_{[\frac{k-1}{2}]+2}\norm{h}_{k-1}.
%\label{co3}
%&\abs{[\partial^{\alpha}, g]h}_0\ls\abs{\partial g}_{k-1}\abs{h}_{[\frac{k-1}{2}]+\frac{3}{2}}+\abs{\partial g}_{[\frac{k-1}{2}]+\frac{3}{2}}\abs{h}_{k-1} .
\end{align}
$(iii)$ For $|\alpha|=k\geq 2$, we define the symmetric commutator
\begin{equation*}
\left[D^{\alpha}, g, h\right] = D^{\alpha}(gh)-D^{\alpha}g h-gD^{\alpha} h.
\end{equation*}
Then we have
\begin{equation}
\label{co2}
\norm{\left[D^{\alpha}, g, h\right]}_0\ls\norm{D g}_{k-2}\norm{D h}_{[\frac{k-2}{2}]+2}+ \norm{D g}_{[\frac{k-2}{2}]+2}\norm{D h}_{k-2} .
\end{equation}
\end{lemma}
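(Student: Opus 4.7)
My plan is to derive all three estimates as classical Moser-type consequences of the Leibniz rule, H\"older's inequality, and the Sobolev embedding $H^2\hookrightarrow L^{\infty}$. The embedding holds on $H^k_{r,z}(\Omega)$ because the weighted norm $\norm{\cdot}_k$ uses the measure $r\,dr\,d\theta\,dz$, which (up to the factor $2\pi$ from $\theta$-integration) agrees with the usual three-dimensional Lebesgue measure restricted to axially symmetric functions on the solid torus $\Omega$; hence for such functions $\|u\|_{L^{\infty}(\Omega)}\ls\norm{u}_2$ and all the standard 3D interpolation inequalities transfer to $H^k_{r,z}(\Omega)$.

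For (i), expand $D^{\alpha}(gh)=\sum_{\beta\leq\alpha}\binom{\alpha}{\beta}D^{\beta}g\,D^{\alpha-\beta}h$ by Leibniz and estimate each summand in $L^2$. I split the sum into two disjoint pieces according to whether $|\beta|\leq[k/2]$ or $|\beta|\geq\lceil k/2\rceil$; together these exhaust all values since $[k/2]+\lceil k/2\rceil=k$. In the first case, $\|D^{\beta}g\|_{L^{\infty}}\ls\norm{g}_{|\beta|+2}\leq\norm{g}_{[k/2]+2}$ by Sobolev embedding and $\|D^{\alpha-\beta}h\|_{L^2}\leq\norm{h}_k$, so H\"older yields the bound $\norm{g}_{[k/2]+2}\norm{h}_k$. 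In the second case, $|\alpha-\beta|=k-|\beta|\leq[k/2]$, and the symmetric choice gives the bound $\norm{g}_k\norm{h}_{[k/2]+2}$. Summing over $\beta$ yields (i).

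For (ii) and (iii) I reduce to (i) at lower order. Since $[D^{\alpha},g]h$ removes the $\beta=0$ Leibniz term,
\[
[D^{\alpha},g]h=\sum_{|\beta|\geq 1,\ \beta\leq\alpha}\binom{\alpha}{\beta}D^{\beta}g\,D^{\alpha-\beta}h.
\]
Choosing any coordinate direction $e_i$ with $\beta_i\geq 1$ and writing $D^{\beta}g=D^{\beta-e_i}(\partial_i g)$ turns every summand into a product of derivatives of $Dg$ and $h$ of total order $k-1$, so applying (i) at order $k-1$ to the pair $(Dg,h)$ delivers (ii). Analogously, $[D^{\alpha},g,h]$ removes both the $\beta=0$ and $\beta=\alpha$ Leibniz terms, forcing $|\beta|\geq 1$ \emph{and} $|\alpha-\beta|\geq 1$ in every surviving summand; pulling one derivative out of each factor reduces (iii) to (i) at order $k-2$ applied to $(Dg,Dh)$.

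The computation is standard; the only place requiring care is the bookkeeping of the case split in (i), which is disjoint and exhaustive precisely because $[k/2]+\lceil k/2\rceil=k$, and the handling of low-$k$ endpoints in (ii)--(iii) where $[(k-1)/2]+2$ or $[(k-2)/2]+2$ might exceed $k-1$ or $k-2$; in those corner cases the right-hand side trivially majorizes the full $L^2$-norm of the product through Sobolev embedding, so no additional analytic input is required.
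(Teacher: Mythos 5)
Your argument is correct and is essentially the paper's own proof, which likewise just expands by the Leibniz formula and bounds each product by putting the lower-order factor in $L^\infty$ via $H^2_{r,z}\hookrightarrow L^\infty$ and the higher-order factor in $L^2_{r,z}$ (the paper only sketches this and refers to Lemma A.1 of Wang--Xin), with (ii) and (iii) reduced to the product estimate exactly as you do. One small caveat on your justification of the embedding: since $\norm{\cdot}_k$ counts only $\partial_r,\partial_z$ derivatives, the weighted norm does not literally coincide with the full 3D Sobolev norm for $k\geq 2$ (Cartesian second derivatives of an axisymmetric function involve $\partial_r u/r$), but the needed embedding still holds, e.g. because $\partial_r u,\partial_z u\in H^1_{r,z}(\Omega)=H^1(\Omega)\subset L^6(\Omega)$, whence $u\in W^{1,6}(\Omega)\subset L^\infty(\Omega)$.
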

\begin{proof}
The proof of these estimates is standard; we first use the Leibniz formula to expand these terms as sums of products and then control the $L^2_{r,z}$ norm of each product with the lower order derivative term in $L^\infty\subset H^2_{r,z} $ and the higher order derivative term in $L^2_{r,z}$. See for instance Lemma A.1 of \cite{Wang_15}.
\end{proof}

We will also use the following lemma.
\begin{lemma}
It holds that
\begin{equation}
\label{co123}
\abs{gh}_{1/2} \ls \abs{g}_{W^{1,\infty}}\abs{h}_{1/2}.
\end{equation}
 \end{lemma}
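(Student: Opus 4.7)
The plan is to establish the bound by the intrinsic Gagliardo--Slobodeckij representation of $H^{1/2}(\Gamma)$, which (since $\Gamma$ is essentially a one-dimensional torus $\mathbb{T}$ at $r=R_0$) is
\begin{equation*}
|f|_{H^{1/2}(\Gamma)}^2 \simeq |f|_{L^2(\Gamma)}^2 + \int_\Gamma\!\int_\Gamma \frac{|f(x)-f(y)|^2}{|x-y|^2}\,dx\,dy.
\end{equation*}
I would first handle the $L^2$ part trivially by H\"older: $|gh|_{L^2(\Gamma)} \le |g|_{L^\infty(\Gamma)} |h|_{L^2(\Gamma)}$, which is already controlled by $|g|_{W^{1,\infty}}|h|_{H^{1/2}}$.

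For the Gagliardo seminorm, I would use the pointwise splitting
\begin{equation*}
(gh)(x) - (gh)(y) = g(x)\bigl(h(x)-h(y)\bigr) + h(y)\bigl(g(x)-g(y)\bigr),
\end{equation*}
so that the double integral splits into two terms. The first term is bounded directly by $|g|_{L^\infty}^2 \,[h]_{H^{1/2}}^2$ after pulling $g(x)$ out in $L^\infty$. For the second term, the plan is to split the region of integration into a short-range piece $|x-y|\le 1$ and a long-range piece $|x-y|>1$ (both intrinsic distances on $\mathbb{T}$). On the short-range piece, the Lipschitz bound $|g(x)-g(y)|\le |\nabla g|_{L^\infty}|x-y|$ cancels the singular denominator and yields $|\nabla g|_{L^\infty}^2 |h|_{L^2}^2$ up to an $O(1)$ factor from $|\Gamma|$. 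On the long-range piece the denominator $|x-y|^{-2}$ is bounded, and $|g(x)-g(y)|\le 2|g|_{L^\infty}$ again gives a multiple of $|g|_{L^\infty}^2|h|_{L^2}^2$. Summing and using $|h|_{L^2}\le |h|_{H^{1/2}}$ closes the estimate.

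An alternative that I would mention in passing is the real-interpolation route: multiplication by $g$ is a bounded operator $L^2(\Gamma)\to L^2(\Gamma)$ of norm $\le |g|_{L^\infty}$ and $H^1(\Gamma)\to H^1(\Gamma)$ of norm $\le |g|_{W^{1,\infty}}$ (the latter by the ordinary Leibniz rule $\partial(gh) = (\partial g) h + g\,\partial h$ together with H\"older). Since $[L^2,H^1]_{1/2,2} = H^{1/2}$, the bilinear bound on $H^{1/2}$ follows at once with operator norm $\lesssim |g|_{W^{1,\infty}}$. This route is cleaner but relies on the black box of interpolation.

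The only mildly delicate point is the decomposition of the Gagliardo integral according to whether $|x-y|$ is small or not: one cannot use the Lipschitz bound globally (it would give a non-integrable contribution if $\Gamma$ were unbounded, and even on the torus one needs to be careful with intrinsic distance), and one cannot use the $L^\infty$ bound globally (the singularity at $x=y$ is not integrable). This splitting is the only place where the full strength of $|g|_{W^{1,\infty}} = |g|_{L^\infty}+|\nabla g|_{L^\infty}$, rather than just $|g|_{L^\infty}$, is used; everything else is routine.
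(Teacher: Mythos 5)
Your argument is correct, but your primary route is not the one the paper takes; the paper's proof is precisely the ``alternative'' you mention in passing: it checks $\abs{gh}_{s}\ls\abs{g}_{W^{1,\infty}}\abs{h}_{s}$ for $s=0$ (H\"older) and $s=1$ (Leibniz plus H\"older), and then interpolates. That two-line route is the natural one here because the paper \emph{defines} $H^{1/2}(\Gamma)$ by interpolation between the integer-order spaces, so no further identification of the norm is needed. Your main route, via the Gagliardo--Slobodeckij seminorm on $\Gamma\simeq\mathbb{T}$ with the splitting $(gh)(x)-(gh)(y)=g(x)\bigl(h(x)-h(y)\bigr)+h(y)\bigl(g(x)-g(y)\bigr)$ and the near-diagonal/far-diagonal dichotomy (Lipschitz bound where $|x-y|$ is small, $L^\infty$ bound where it is not), is sound and correctly isolates where the full $W^{1,\infty}$ norm is used; its price is the additional (standard, but unstated in the paper) fact that the Gagliardo norm is equivalent to the interpolation norm on the one-dimensional torus, while its benefit is that it is self-contained and avoids the interpolation ``black box.'' Either way the constant depends only on $\Gamma$, as required.
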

\begin{proof}
It is direct to check that $\abs{gh}_{s} \ls \abs{g}_{W^{1,\infty}}\abs{h}_{s}$ for $s=0,1$. Then the estimate \eqref{co123} follows by the interpolation.
\end{proof}
\subsection{Hardy-type inequality}
We recall the following Hardy inequality:
\begin{lemma}[A higher order Hardy-type inequality]
\label{hd1}
  Let $s\geq 1$ be a given integer, and suppose that $g\in H^s_{r,z}(\Omega)$
and $g(0,z)=0$, we have
  \begin{equation}
	 \norm{\dfrac{g}{r}}_{s-1}\leq C\norm{g}_s.
    \label{}
  \end{equation}
  \label{hardy}
\end{lemma}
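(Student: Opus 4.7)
The plan is to argue by induction on $s \ge 1$, reducing to the one‐dimensional weighted Hardy inequality as the essential analytic ingredient.

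\textbf{Base case ($s=1$).} The goal is $\|g/r\|_0 \le C\|g\|_1$, i.e.\ the pointwise (in $z$) weighted estimate
\[
\int_0^{R_0}\frac{|g(r,z)|^2}{r}\,dr \;\le\; C\int_0^{R_0} r\,|\partial_r g(r,z)|^2\,dr + C|g(R_0,z)|^2.
\]
By density it suffices to treat smooth $g$ with $g(0,z)=0$. Writing $g(r,z)=\int_0^r \partial_\rho g(\rho,z)\,d\rho$ and combining the Cauchy--Schwarz inequality with the classical weighted Hardy inequality on $(0,R_0)$ at the critical exponent yields the desired one-dimensional estimate. Integrating in $z\in\mathbb T$ and applying the trace inequality $\int_{\mathbb T}|g(R_0,z)|^2\,dz \le C\|g\|_1^2$ (valid because the weight $r$ is bounded below by $R_0/2$ on a neighbourhood of $\{r=R_0\}$) completes the base case.

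\textbf{Inductive step.} Assume the lemma is established for all $s'<s$. The key structural observation is that whenever $h\in H^{s'}_{r,z}$ satisfies $h(0,z)=0$, one has
\[
\partial_r\!\left(\frac{h}{r}\right)=\frac{h_r-h/r}{r},
\]
and the new numerator $h_r-h/r$ again vanishes at $r=0$: indeed, for smooth $h$ with $h(0,z)=0$, Taylor expansion gives $h=h_r(0,z)\,r+O(r^2)$, so $h/r=h_r(0,z)+O(r)$ and $h_r-h/r=O(r)$. Define inductively $h_0:=g$ and $h_k:=\partial_r h_{k-1}-h_{k-1}/r$; each $h_k$ vanishes at $r=0$, and an easy induction on $k$ gives $\partial_r^k(g/r)=h_k/r$. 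Hence for any multi-index $\alpha=(\alpha_1,\alpha_2)$,
\[
D^\alpha\!\left(\frac{g}{r}\right) \;=\; \frac{\partial_z^{\alpha_2}h_{\alpha_1}}{r}.
\]
Applying the base case to the function $\partial_z^{\alpha_2}h_{\alpha_1}$ (which vanishes at $r=0$), we obtain
\[
\|D^\alpha(g/r)\|_0 \;\le\; C\|\partial_z^{\alpha_2}h_{\alpha_1}\|_1 \;\le\; C\|h_{\alpha_1}\|_{\alpha_2+1}.
\]
To bound $\|h_k\|_l$ for $k+l\le s$, I run an auxiliary induction on $k$: from $h_k=\partial_r h_{k-1}-h_{k-1}/r$ and the triangle inequality,
\[
\|h_k\|_l \le \|h_{k-1}\|_{l+1} + \|h_{k-1}/r\|_l,
\]
where the second term is controlled by the main inductive hypothesis (for $s'=l+1\le s-1$) applied to $h_{k-1}$. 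Summing over $|\alpha|\le s-1$ then gives $\|g/r\|_{s-1}\le C\|g\|_s$.

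\textbf{Main obstacle.} The genuine analytic difficulty sits in the base case: a direct integration by parts on $\int_0^{R_0}|g|^2/r\,dr$ only produces tautologies, since the weighted Hardy inequality here lives at the critical Muckenhoupt exponent. The proof must exploit the vanishing $g(0,z)=0$ quantitatively, either through Cauchy--Schwarz applied to the representation $g(r,z)=\int_0^r\partial_\rho g\,d\rho$ paired with the trace on $\{r=R_0\}$, or equivalently by a Poincar\'e-type argument on $(-\infty,\ln R_0)$ after the substitution $r=e^t$. Once the base case is in hand, the inductive machinery above is essentially bookkeeping organized around the recursive identity $\partial_r^k(g/r)=h_k/r$.
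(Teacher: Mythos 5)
Your base case is where the proof breaks, and it breaks as written. The one-dimensional estimate you claim,
\begin{equation*}
\int_0^{R_0}\frac{|g(r,z)|^2}{r}\,dr \;\le\; C\int_0^{R_0} r\,\abs{\partial_r g(r,z)}^2\,dr + C\abs{g(R_0,z)}^2,
\end{equation*}
is false for functions that merely vanish at $r=0$: take $g=r^\epsilon$ (or the smooth regularization $(r^2+\delta^2)^{\epsilon/2}-\delta^\epsilon$). The left side equals $R_0^{2\epsilon}/(2\epsilon)$ while the right side equals $C(\epsilon R_0^{2\epsilon}/2+R_0^{2\epsilon})$, so the ratio blows up as $\epsilon\to0^+$. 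This is exactly the excluded case of the weighted Hardy inequality ($\int x^{a}\abs{g}^2\,dx\lesssim\int x^{a+2}\abs{g'}^2\,dx$ holds for $a\ne-1$; here $a=-1$), and neither of the two mechanisms you invoke repairs it: Cauchy--Schwarz on $g(r)=\int_0^r\partial_\rho g\,d\rho$ produces the divergent factor $\int_0^r\rho^{-1}\,d\rho$, and the substitution $r=e^t$ converts the claim into a Poincar\'e inequality on the half-line $(-\infty,\ln R_0)$, which fails (same exponential counterexample). Your ``Main obstacle'' paragraph correctly diagnoses that the inequality sits at the critical exponent, but then asserts that the qualitative vanishing $g(0,z)=0$ rescues it; it does not. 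What is needed is quantitative vanishing, e.g.\ $\abs{g}\lesssim r^{1/2+\delta}$ near the axis, or the structural fact (true for the radial and angular components of vector fields smooth across the axis, which is how the lemma is used in this paper) that $g=r\phi$ with $\phi$ regular. The inductive superstructure inherits the same defect: the claim that $h_k=\partial_rh_{k-1}-h_{k-1}/r$ ``again vanishes at $r=0$'' rests on a second-order Taylor expansion at the axis that $H^{s'}_{r,z}$ regularity does not provide, and for $s=1$ the trace $g(0,z)$ is not even well defined for a general $H^1_{r,z}$ function, the axis being a codimension-two set.

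For comparison, the paper gives no proof at all: it only remarks that the lemma ``can be proved by a similar approach'' to the one-dimensional, unweighted Hardy inequality of Gu--Lei, where the base estimate $\int_0^1\abs{g}^2x^{-2}\,dx\le4\int_0^1\abs{g'}^2\,dx$ is noncritical and genuinely true for $g(0)=0$. Transplanting that argument to the present $r\,dr\,dz$-weighted setting shifts the exponents onto the critical line, so the analogy is not automatic; the lemma should be read (and in the paper is only ever applied) for functions with the additional axis structure just described, for which your recursive identity $\partial_r^k(g/r)=h_k/r$ becomes legitimate and the rest of your bookkeeping would go through.
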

Lemma \ref{hd1} can be proved by a similar approach used in \cite[Lemma 3.1]{Gu_2011}.
\subsection{Geometry Identities}
\begin{align}
\label{dJ}
&\partial J=\dfrac{\partial J}{\partial {\mathcal F}_{ij}}\partial {\mathcal F}_{ij} =  J{\mathcal{A}}_{ij}\partial {\mathcal F}_{ij},\\
&\partial {\mathcal{A}}_{ij}  = -{\mathcal{A}}_{i\ell}\partial {\mathcal F}_{m\ell}{\mathcal{A}}_{mj},
\label{partialF}
\end{align}
where $\partial$ can be $\partial_r$, $\partial_z$ and $\partial_t$ operators.

From the incompressible constraint, we have $\partial_t J= -J \dfrac{v^r}{R}$ for $J=\text{det} \mathcal{F}$, which means $J=\dfrac{r}{R}$.

Moreover, we have the Piola identity
\begin{equation}\label{polia}
	\partial_r\left(J\a_{1j}\right)+\partial_z\left(J\a_{2j}\right) =0.
\end{equation}
\section{Linearized approximate system}\label{lap}
In this section, we construct the approximate system by linearizing method and then derive a priori estimates for this system and also prove the solvability of the system.
\subsection{Approximate system}First, we denote $\zeta=(R, Z), \nu=(v^r, v^z)$ and reformulate the the system \eqref{eq:mhd} into two coupled sub-systems which are both defined in $\Omega$:
\begin{equation}
	\begin{cases}
		\partial_t \zeta= \nu\,\,&\text{in}\,\,\Omega,\\
%		\partial_t Z=v^z\,\,&\text{in}\,\,\Omega,\\
\partial_t \nu+\nabla_{\a} q-(b_0\cdot\nabla)^2\zeta=\left(\dfrac{(v^{\theta})^2}{R}-R(b_0\cdot\nabla \Theta)^2, 0\right)\,\,&\text{in}\,\,\Omega,\\
%\partial_t v^r+\pa^\a_R q-(b_0\cdot\nabla)^2R
%=\dfrac{(v^{\theta})^2}{R}-R(b_0\cdot\nabla \Theta)^2\,\,&\text{in}\,\,\Omega,\\
%\partial_t v^z+\pa^\a_Z q-(b_0\cdot\nabla)^2Z
%=0\,\,&\text{in}\,\,\Omega,\\
\Div_{\a}\nu=0\,\,&\text{in}\,\,\Omega,\\
%\pa^\a_Rv^r+\dfrac{v^r}{R}+\pa^\a_Zv^z=0\,\,&\text{in}\,\,\Omega,\\
q=\dfrac{1}{2}\dfrac{C^2(t)}{R^2}\,\,&\text{on}\,\,\Gamma,\\
(\zeta, \nu)|_{t=0}=(r, z, v_0^r, v_0^z).\,\,&\text{in}\,\,\Omega.
\end{cases}
	\label{sub1}
\end{equation}
and
\begin{equation}
	\begin{cases}
		\partial_t \Theta=\dfrac{v^{\theta}}{R}\,\,&\text{in}\,\,\Omega,	\\
	\partial_t v^{\theta}-(b_0\cdot\nabla)(Rb_0\cdot\nabla\Theta)=-\dfrac{v^{\theta}v^r}{R}
	+b_0\cdot\nabla Rb_0\cdot\nabla\Theta\,\,&\text{in}\,\,\Omega,\\(\Theta, v^{\theta})|_{t=0}=(\theta, u_0^{\theta}).
\end{cases}
	\label{sub2}
\end{equation}
where $\nabla_{\a}=(\partial_R^{\a}, \partial_Z^{\a}), \partial_{\zeta_i}^{\a}:=\a_{ij}\partial_{a_j}, \Div_{\a}g=\dfrac{1}{R}\partial_{\zeta_i}^{\a}(Rg_i).$

Then given $\bar\nu=(\bar v^r, \bar v^z), \bar v^{\theta}$, $\bar\zeta=(\bar R, \bar Z), \bar \Theta$,  $\bar{\mathcal F}=\mathcal F(\bar\zeta)$ and $\bar \a = \a(\bar\zeta)$, $\bar C(t)=C(\bar \nu, \bar\zeta),$ we introduce our approximate system as two-step linearized system: firstly, we solve
\begin{equation}
	\begin{cases}
		%\partial_t \zeta= \nu\,\,&\text{in}\,\,\Omega,\\
			\partial_t \zeta= \nu\,\,&\text{in}\,\,\Omega,\\
%		\partial_t Z=v^z\,\,&\text{in}\,\,\Omega,\\
\partial_t \nu+\nabla_{\bar\a} q-(b_0\cdot\nabla)^2\zeta=\left(\dfrac{(\bar v^{\theta})^2}{R}-\bar R(b_0\cdot\nabla \bar\Theta)^2, 0\right)\,\,&\text{in}\,\,\Omega,\\
%\partial_t v^r+\pa^\a_R q-(b_0\cdot\nabla)^2R
%=\dfrac{(v^{\theta})^2}{R}-R(b_0\cdot\nabla \Theta)^2\,\,&\text{in}\,\,\Omega,\\
%\partial_t v^z+\pa^\a_Z q-(b_0\cdot\nabla)^2Z
%=0\,\,&\text{in}\,\,\Omega,\\
\Div_{\bar\a}\nu=0\,\,&\text{in}\,\,\Omega,\\
%\pa^\a_Rv^r+\dfrac{v^r}{R}+\pa^\a_Zv^z=0\,\,&\text{in}\,\,\Omega,\\
q=\dfrac{1}{2}\dfrac{\bar C^2(t)}{\bar R^2}\,\,&\text{on}\,\,\Gamma,\\
(\zeta, \nu)|_{t=0}=(r, z, v_0^r, v_0^z).\,\,&\text{in}\,\,\Omega.
\end{cases}
	\label{sub21}
\end{equation}
%where
%\begin{equation}
%	\begin{split}
%	&\pa^\ak_R= \ak_{11}\partial_r+\ak_{13}\partial_z,\quad\pa^\ak_Z=\ak_{31}\partial_r+\ak_{33}\partial_z,\\&\dfrac{r}{\bar R}\ak_{11}=\partial_z\bar Z, \dfrac{r}{\bar R}\ak_{13}=-\partial_r\bar Z,
%		\dfrac{r}{\bar R}\ak_{31}=-\partial_z\bar R, \dfrac{r}{\bar R}\a_{33}=\partial_r\bar R
%	\end{split}
%	\label{a2d}
%\end{equation}
After solving the system \eqref{sub21}, then we solve the following system:
\begin{equation}
	\begin{cases}
		\partial_t \Theta=\dfrac{v^{\theta}}{R}\,\,&\text{in}\,\,\Omega,\\
	\partial_t v^{\theta}-(b_0\cdot\nabla)(Rb_0\cdot\nabla\Theta)=-\dfrac{\bar v^{\theta}\bar v^r}{\bar R}
	+b_0\cdot\nabla \bar Rb_0\cdot\nabla\bar\Theta\,\,&\text{in}\,\,\Omega,\\(\Theta, v^{\theta})|_{t=0}=(\theta, v_0^{\theta}).
\end{cases}
	\label{sub22}
\end{equation}

We define
\begin{equation}\label{apbound}
	M=\sup_{t\in [0,T]}\norm{(\bar v^r,\bar v^{\theta}, \bar v^z,  \bar R, \bar Z, \partial_t\bar R, \partial_t\bar Z, b_0\cdot\nabla \bar R,\bar R b_0\cdot\nabla \bar\Theta, b_0\cdot\nabla \bar Z)}_4^2.
\end{equation}
We take the time $T>0$  sufficiently small so that for $t\in[0,T]$,
\begin{align}
\label{inin3}\abs{\bar{\mathcal F}_{ij}(t)-\delta_{ij}}\leq \dfrac{1}{8}, \abs{\bar\a_{ij}(t)-\delta_{ij}}\leq \dfrac{1}{8} \text{ in }\Omega.
\end{align}
From the definition of $\bar C(t)$:
$$\bar C(t)=C(0)e^{\int_0^t \bar A(\tau)\,d\tau}, \bar A(t)=\dfrac{\int_{\mathbb T}\left(\bar v^r\partial_z\bar Z(R_0, z, t)-\bar v^z\partial_z \bar R(R_0,z,t)\right)\,dz}{\int_{\mathbb T}\left(\ln R_S-\ln \bar R(R_0,z,t)\right)\partial_z \bar Z(R_0, z, t)\,dz},$$
we can also have
\begin{equation}
\sup_{t\in [0,T]}\abs{\bar C(t)} \leq C(M).
\end{equation}
We define the high order energy functional:
\begin{equation}
	\mathfrak{E}(t)=\norm{(v^r, v^{\theta},  v^z, R, Z, b_0\cdot\nabla R, Rb_0\cdot\nabla \Theta, b_0\cdot\nabla Z)}_4^2
\end{equation}
We will prove that $\mathfrak{E}$ remains bounded on a time interval dependent of $M$, which is stated as the following theorem.

\begin{theorem} \label{th43}
There exists a time $T_1$ dependent of $M$ such that
\begin{equation}
\label{bound}
\sup_{[0,T_1]}\mathfrak{E}(t)\leq 2M_0,
\end{equation}
where $M_0=P\left(\norm{\left(v_0^r, v_0^{\theta}, v_0^z\right)}_4^2+\norm{\left(b_0^r, b_0^{\theta}, b_0^z\right)}_4^2\right).$
\end{theorem}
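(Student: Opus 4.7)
The plan is to estimate $\mathfrak{E}(t)$ by treating the two linearized subsystems \eqref{sub21} and \eqref{sub22} in sequence, using tangential/divergence/curl decompositions in the $H^4_{r,z}$ setting, and then to close the estimate on a time interval $T_1(M)$ so short that $\int_0^{T_1}P(M)\,ds \leq M_0$. The key point is that, because the systems are linear in the unknowns $(\nu,q,\zeta)$ and $(v^{\theta},\Theta)$ with coefficients depending only on the bar-quantities (controlled by $M$), all nonlinear feedback of the unknown into itself is absent, so the a priori argument is clean: one only needs to show $\frac{d}{dt}\mathfrak{E}\lesssim P(M)(1+\mathfrak{E})$ modulo boundary and commutator contributions.

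First I would handle the scalar subsystem \eqref{sub22}. Since $b_0\cdot\nabla$ is a fixed first-order operator and the right-hand side depends only on bar-quantities, I would apply $D^{\alpha}$ with $|\alpha|\le 4$, pair the $v^{\theta}$ equation with $D^{\alpha}v^{\theta}$ and the $\partial_t(Rb_0\cdot\nabla\Theta)$ equation with $D^{\alpha}(Rb_0\cdot\nabla\Theta)$, and integrate by parts in $b_0\cdot\nabla$. Because $b_0^r|_{\Gamma}=0$ by \eqref{bcond}, the boundary term produced by this integration by parts vanishes identically, so no boundary regularity issue appears here. The only genuine difficulty is the $1/R$ and $1/r$ singularities (e.g.\ $v^{\theta}v^r/R$ and $v^{\theta}/R$), which I would control using the higher-order Hardy inequality \eqref{hardy} applied to quantities that vanish on the axis (since $v^r_0(0,z)=v^{\theta}_0(0,z)=0$, this vanishing is preserved along the axially symmetric system), together with the product estimate \eqref{co0}. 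This yields
\[
\tfrac{d}{dt}\bigl(\|v^{\theta}\|_4^2+\|Rb_0\cdot\nabla\Theta\|_4^2\bigr)\lesssim P(M)\bigl(1+\|v^{\theta}\|_4^2+\|Rb_0\cdot\nabla\Theta\|_4^2\bigr).
\]

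The main work lies in \eqref{sub21}. I would split $\|(\nu,\zeta,b_0\cdot\nabla\zeta)\|_4^2$ into tangential, divergence, and curl pieces. For the tangential part I would apply $\bp^4$ to the momentum and magnetic equations, then rewrite $\bp^4(\nabla_{\bar\a}q)$ and $\bp^4(\diverge_{\bar\a}\nu)$ using Alinhac's good unknowns $\V=\bp^4\nu-\bp^4\bar\zeta\cdot\nabla_{\bar\a}\bar\nu$ and $\Q=\bp^4 q-\bp^4\bar\zeta\cdot\nabla_{\bar\a}q$, which converts the would-be derivative-losing terms $\mathcal{R}_Q$ into commutators of $H^3$-order, controllable by \eqref{co0}--\eqref{co2} and $M$. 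Pairing with $\V$ produces the energy identity
\[
\tfrac12\tfrac{d}{dt}\!\int_{\Omega}|\V|^2+|\bp^4 b_0\cdot\nabla\zeta|^2+\mathcal{I}_b\approx\text{controllable commutators},
\]
where the boundary integral $\mathcal{I}_b$ involves $\bp^4\zeta$ and $\bp^4 q$ on $\Gamma$. The \emph{main obstacle} is $\mathcal{I}_b$: since $q|_{\Gamma}=\tfrac12\bar C^2(t)/\bar R^2$ depends on $\bar\zeta$ (not the unknown), differentiating the Dirichlet condition only yields $|\bp^4\bar\zeta|_{1/2}\lesssim P(M)$, while to dualize $\mathcal{I}_b$ I need $|\bp^4\zeta|_{1/2}$. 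Here I would invoke the non-collinearity/boundary-smoothing observation: because $b_0^r|_{\Gamma}=0$ and $|b_0^z|\ge\delta$ on $\Gamma$ by \eqref{stc}, I can write
\[
|\bp^4\zeta|_{1/2}\lesssim \Bigl|\frac{\bp^3(b_0^z\,\bp\zeta)}{b_0^z}\Bigr|_{1/2}\lesssim \|b_0\cdot\nabla\zeta\|_4+\text{l.o.t.},
\]
trading a half-derivative loss on $\Gamma$ for a full-derivative gain inside thanks to the magnetic regularization. The boundary integral is then closed via an $(H^{-1/2},H^{1/2})$ duality against the already-controlled $\bp^4\nu$ trace.

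Having controlled the tangential norm, I would finish with standard divergence and curl estimates: the incompressibility $\Div_{\bar\a}\nu=0$ and the flow map equation give $\Div\nu$ and $\Div\zeta$ in terms of commutators with $\bar\a$, while $\curl\nu$ and $\curl(b_0\cdot\nabla\zeta)$ are estimated by pairing the momentum equation with $\curl$ and using the cancellation $\curl\nabla_{\bar\a}q=[\curl,\nabla_{\bar\a}]q$. Combining the three pieces yields
\[
\tfrac{d}{dt}\mathfrak{E}(t)\le P(M)\bigl(1+\mathfrak{E}(t)\bigr).
\]
Gronwall together with $\mathfrak{E}(0)\le M_0$ delivers $\mathfrak{E}(t)\le (M_0+P(M)t)e^{P(M)t}$, and choosing $T_1=T_1(M,M_0)$ small enough gives the desired bound $\sup_{[0,T_1]}\mathfrak{E}\le 2M_0$. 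Throughout, the smallness \eqref{inin3} of $\bar{\mathcal F}-I$ and $\bar\a-I$ guarantees that $\nabla_{\bar\a}$ and $\Div_{\bar\a}$ are uniformly elliptic/coercive and that the change of variables does not degrade the constants.
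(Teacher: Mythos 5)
Your proposal follows the paper's strategy at a high level: decompose into the two subsystems \eqref{sub21} and \eqref{sub22}, handle \eqref{sub22} by direct energy plus Hardy, handle \eqref{sub21} by tangential estimates with Alinhac's good unknowns plus the non-collinearity boundary smoothing plus div/curl recovery, and close in time by smallness of $T_1(M)$ — this is essentially the paper's Propositions \ref{th412}, \ref{vthe} combined in the short proof of Theorem \ref{th43}. However, there are a few places where what you wrote would not actually close.

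The most substantive gap is the claim that after the Alinhac substitution the residual commutators are ``controllable by \eqref{co0}--\eqref{co2} and $M$.'' The commutator $\mathcal{C}_i(q)$ is bounded by $P(\norm{\bar\zeta}_4)\norm{q}_4$, and $q$ is an \emph{unknown} of \eqref{sub21}, not a bar-quantity; it is not controlled by $M$. The paper therefore establishes a separate elliptic estimate $\norm{q}_4^2\leq C(M)P(\sup\mathfrak{E})$ (Proposition \ref{pr}) before the tangential energy argument, and this estimate is nontrivial in the axially symmetric geometry: to recover the $\partial_r$-derivatives of $q$ one has to introduce $\mathfrak g=\bar E_{1j}\partial_{a_j}q/\bar R$ and carry out the weighted integration by parts \eqref{jhieoe}--\eqref{riep}. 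Your proposal simply omits this step, and without it the tangential estimate does not close.

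Two secondary issues. First, with the frozen coefficient $\bar\a=\a(\bar\zeta)$, the boundary integral $\mathcal{I}_b$ produced by the good unknowns carries $\bp^4\bar\zeta$, not $\bp^4\zeta$, and your assertion that ``differentiating the Dirichlet condition only yields $|\bp^4\bar\zeta|_{1/2}\lesssim P(M)$'' is not right: from $\norm{\bar\zeta}_4\leq M$ the trace theorem gives $|\bar\zeta|_{3.5}$, a half-derivative short of $|\bp^4\bar\zeta|_{1/2}$. Thus the non-collinearity smoothing $|\bp^4\bar\zeta|_{1/2}\lesssim |\bp^3(b_0^z\bp\bar\zeta)/b_0^z|_{1/2}\lesssim\norm{b_0\cdot\nabla\bar\zeta}_4\leq C(M)$ is not optional but essential, and it should be applied to $\bar\zeta$, not to the unknown $\zeta$ as you wrote. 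Second, the recovery of normal derivatives from $\Div$ and $\curl$ is not the usual Hodge argument here: in the cylindrical Sobolev spaces one must separate $\partial_r v^r$ from $v^r/r$ inside $\Div\nu$, which the paper does with the weighted identities \eqref{sln}--\eqref{sfo2}, producing boundary and axis terms with the right signs. Calling these ``standard divergence and curl estimates'' glosses over a real subtlety. The Gronwall-versus-direct-integration finish is immaterial; both close once the above are in place.
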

\subsection{A priori estimates for system \eqref{sub21}.}\label{apest}
For system \eqref{sub21}, we have the following a priori estimates:
\begin{proposition} \label{th412}
For $t\in[0,T]$, it holds that:
	\begin{equation}
	\norm{\nu}_4^2+\norm{\zeta}_4^2+\norm{b_0\cdot\nabla\zeta}_4^2\leq M_0+TC(M)P\left(\sup_{t\in [0,T]}\mathfrak E(t)\right).
\end{equation}
\end{proposition}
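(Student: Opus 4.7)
The plan is to carry out tangential $H^4$ energy estimates combined with divergence and curl estimates, using Alinhac's good unknowns to absorb the apparent loss of derivatives from the pressure gradient and invoking the non-collinearity hypothesis \eqref{stc} to handle the half-derivative loss at the boundary. With $\bp=\partial_z$ denoting the tangential derivative, I apply $\bp^4$ to the momentum equation of \eqref{sub21} and introduce the good unknowns
\[
\V=\bp^4\nu-\bp^4\bar\zeta\cdot\nabla_{\bar\a}\nu,\qquad \Q=\bp^4 q-\bp^4\bar\zeta\cdot\nabla_{\bar\a}q.
\]
A direct commutator computation in the spirit of \cite{Alinhac,MasRou,Wang_15,GW_16} then produces
\[
\partial_t\V+\nabla_{\bar\a}\Q-(b_0\cdot\nabla)^2\bp^4\zeta=\mathcal R_v,\qquad \Div_{\bar\a}\V=\mathcal R_d,
\]
where $\mathcal R_v,\mathcal R_d$ are bounded in $L^2(\Omega)$ by $C(M)P(\mathfrak E)$ via the product/commutator estimates \eqref{co0}--\eqref{co2} and Hardy's inequality; the essential cancellation is that the naive top-order pressure/geometry term $\mathcal R_Q$ drops out of both remainders by construction of $\V$ and $\Q$.

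Pairing the $\V$-equation with $\V$ in $L^2(\Omega)$, integrating the magnetic term by parts in $b_0\cdot\nabla$, which is tangent to $\Gamma$ since $b_0^r|_\Gamma=0$ by \eqref{bcond}, and using $\partial_t\zeta=\nu$, yields an identity of the form
\[
\tfrac12\tfrac{d}{dt}\bigl(\norm{\V}_0^2+\norm{\bp^4(b_0\cdot\nabla\zeta)}_0^2\bigr)+\mathcal I_b\le C(M)P(\mathfrak E),
\]
where $\mathcal I_b$ is the boundary integral produced by integrating $\int_\Omega\V\cdot\nabla_{\bar\a}\Q$ against $\Div_{\bar\a}\V=\mathcal R_d$. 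Hardy's inequality is essential here for the singular factors $1/R$ that appear both in the forcing $(\bar v^\theta)^2/R$ and in the angular part of $\Div_{\bar\a}$.

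The main obstacle is $\mathcal I_b$: the Dirichlet condition $q=\tfrac12\bar C^2/\bar R^2$ on $\Gamma$ forces $\Q|_\Gamma$ to involve $\bp^4\bar\zeta|_\Gamma$, so a naive bound needs $\abs{\bp^4\zeta}_{1/2}$, which is a half derivative beyond the energy level. The non-collinearity hypothesis \eqref{stc} combined with $b_0^r|_\Gamma=0$ rescues this: since $\abs{b_0^z}\ge\delta$ on $\Gamma$, the product estimate \eqref{co123} together with trace theory gives
\[
\abs{\bp^4\zeta}_{1/2}\ls\Bigl|\tfrac{1}{b_0^z}\bp^3(b_0^z\bp\zeta)\Bigr|_{1/2}+\text{l.o.t.}\ls\norm{b_0\cdot\nabla\zeta}_4+\text{l.o.t.},
\]
trading the missing half derivative on $\zeta$ for a magnetic derivative already controlled by $\mathfrak E$. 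Dualizing $\mathcal I_b$ as an $(H^{-1/2},H^{1/2})$ pairing between $\V\cdot N$, whose $H^{-1/2}$ control follows from $\Div_{\bar\a}\V=\mathcal R_d$, and $\Q|_\Gamma$ bounded as above, yields $\mathcal I_b\le\epsilon\norm{b_0\cdot\nabla\zeta}_4^2+C(M)P(\mathfrak E)$, with the $\epsilon$-term absorbed by the magnetic energy on the left.

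To close, I run a div-curl analysis: the constraint $\Div_{\bar\a}\nu=0$ and the equation for $\curl_{\bar\a}\nu$ obtained by applying $\curl_{\bar\a}$ to the momentum equation (where the pressure drops out up to controllable commutators) supply the missing normal-direction derivatives and upgrade the tangential control into the full $\norm{\nu}_4$. Then $\norm{\zeta}_4$ is recovered by time integration of $\partial_t\zeta=\nu$, which is the source of the $T$-factor in the final bound, and $\norm{b_0\cdot\nabla\zeta}_4$ is given by the magnetic energy in the identity above. Integrating over $[0,T]$ produces $\norm{\nu}_4^2+\norm{\zeta}_4^2+\norm{b_0\cdot\nabla\zeta}_4^2\le M_0+TC(M)P(\sup_{[0,T]}\mathfrak E)$, with the $M_0$ term accounting for the initial data. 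The hard part is the boundary integral step, where without the non-collinearity improvement \eqref{stc} the estimate would apparently lose a half derivative.
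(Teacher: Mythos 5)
Your overall strategy coincides with the paper's: Alinhac good unknowns to cancel the top-order pressure/geometry remainder, the non-collinearity condition \eqref{stc} together with $b_0^r|_\Gamma=0$ to trade $\abs{\bp^4\zeta}_{1/2}$ for $\norm{b_0\cdot\nabla\zeta}_4$, an $(H^{-1/2},H^{1/2})$ pairing for the boundary integral, and a div--curl argument plus time integration of $\partial_t\zeta=\nu$ to recover the full norms. There is, however, one genuine missing ingredient. You assert that the remainders $\mathcal R_v$, $\mathcal R_d$ (and, implicitly, $\norm{\Q}_0$ and $\abs{\Q}_{1/2}$ in the boundary pairing) are controlled by $C(M)P(\mathfrak E)$ ``via the product/commutator estimates and Hardy's inequality,'' but $\mathcal R_v$ contains the commutator $\mathcal C_i(q)$, whose bound \eqref{comest} requires $\norm{q}_4$ --- and $q$ does not appear in the energy functional $\mathfrak E$. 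One must first establish the elliptic estimate $\norm{q}_4^2\le C(M)P\left(\sup_{t\in[0,T]}\mathfrak E(t)\right)$ (Proposition \ref{pr} of the paper), and in this axisymmetric setting that is not a routine application of elliptic regularity: the operator $\frac{1}{\bar R}\partial_{a_i}(\bar R\bar E_{ij}\partial_{a_j}\,\cdot\,)$ carries the degenerate weight $r$ at the axis, only the $\partial_z$ derivatives can be treated by the standard variational bootstrap, and the normal ($\partial_r$) derivatives of $q$ are recovered from the weighted first-order identity \eqref{pressure3} together with the sign argument \eqref{labelss}. Without this estimate the tangential energy inequality cannot be closed.

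Two smaller points. First, in the div--curl step, $\Div\nu=\partial_r v^r+\frac{v^r}{r}+\partial_z v^z$, so control of $\Div\nu$ and $\curl\nu$ in $H^3_{r,z}$ yields $\partial_r v^r+\frac{v^r}{r}$ but not $\partial_r v^r$ by itself; the paper separates these via the integration-by-parts identities \eqref{sln}--\eqref{sfo}, in which the cross term has a favorable sign. Second, your proposed $\varepsilon$-absorption of the boundary term into ``the magnetic energy on the left'' is not available pointwise in time, since the left-hand side carries $\frac{d}{dt}\norm{\bp^4(b_0\cdot\nabla\zeta)}_0^2$ rather than $\norm{b_0\cdot\nabla\zeta}_4^2$; it is also unnecessary, because a pointwise-in-time bound of the boundary term by $C(M)P\left(\sup_{t\in[0,T]}\mathfrak E(t)\right)$ suffices, the factor $T$ in the conclusion arising from the final time integration.
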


\subsubsection{Pressure estimates}\label{pressure1}
\begin{proposition}
The following estimate holds:
	\begin{equation}
\label{nueq}
	\norm{q}_4^2\leq C(M)P\left(\sup_{t\in [0,T]}\mathfrak E(t)\right).
\end{equation}
\label{pr}
\end{proposition}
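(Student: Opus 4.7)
The plan is to view $q$ as the solution of a second-order elliptic Dirichlet boundary value problem. Applying $\Div_{\bar\a}$ to the momentum equation in \eqref{sub21} and exploiting the constraint $\Div_{\bar\a}\nu=0$ (so that $\Div_{\bar\a}\partial_t\nu = -[\partial_t, \Div_{\bar\a}]\nu$), one obtains
\[
-\Div_{\bar\a}\nabla_{\bar\a} q = [\partial_t, \Div_{\bar\a}]\nu - \Div_{\bar\a}\bigl((b_0\cdot\nabla)^2\zeta\bigr) + \Div_{\bar\a}\mathcal G =: \mathcal F \quad \text{in } \Omega,
\]
where $\mathcal G = \bigl(\tfrac{(\bar v^\theta)^2}{\bar R} - \bar R(b_0\cdot\nabla\bar\Theta)^2,\,0\bigr)$, together with the Dirichlet condition $q = \tfrac{1}{2}\bar C(t)^2/\bar R^2$ on $\Gamma$. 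By \eqref{inin3}, the operator $-\Div_{\bar\a}\nabla_{\bar\a}$ is uniformly elliptic with coefficients built from $\bar\a$ and $\bar R$ that are controlled in $H^3$ by $M$, so standard elliptic regularity yields
\[
\norm{q}_4 \lesssim \norm{\mathcal F}_2 + \abs{q}_{7/2} + \norm{q}_0,
\]
and it suffices to bound each term on the right.

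I expect the main obstacle to be the magnetic forcing $\Div_{\bar\a}\bigl((b_0\cdot\nabla)^2\zeta\bigr)$: expanded naively it carries three derivatives on $\zeta$, whereas $\mathfrak E(t)$ only controls $\zeta$ in $H^4$, one derivative short. The remedy exploits the product structure $(b_0\cdot\nabla)^2\zeta=(b_0\cdot\nabla)(b_0\cdot\nabla\zeta)$ together with the key fact that $b_0\cdot\nabla\zeta$ is itself controlled in $H^4$ by the definition \eqref{edef} of $\mathfrak E$. Writing
\[
\Div_{\bar\a}\bigl((b_0\cdot\nabla)(b_0\cdot\nabla\zeta)\bigr) = (b_0\cdot\nabla)\Div_{\bar\a}(b_0\cdot\nabla\zeta) + [\Div_{\bar\a},\,b_0\cdot\nabla](b_0\cdot\nabla\zeta),
\]
the first summand requires only $\Div_{\bar\a}(b_0\cdot\nabla\zeta)\in H^3$ (which follows from $b_0\cdot\nabla\zeta\in H^4$), while the commutator pairs one derivative of $\bar\a$ or $b_0$ with $D(b_0\cdot\nabla\zeta)\in H^3$. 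Both summands lie in $H^2$ with norm bounded by $C(M)P(\mathfrak E(t))$ via \eqref{co0} and the 2D Sobolev embedding $H^3\hookrightarrow L^\infty$.

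The remaining contributions to $\mathcal F$ are routine. The commutator $[\partial_t,\Div_{\bar\a}]\nu$ is a first-order operator in $\nu$ with coefficients $\partial_t\bar\a$ and $\partial_t\bar R=\bar v^r$ bounded in $H^3$ by $M$, so its $H^2$ norm is $\lesssim C(M)\norm{\nu}_4$. An $H^2$ bound for $\Div_{\bar\a}\mathcal G$ follows from \eqref{co0} together with the higher order Hardy inequality (Lemma \ref{hardy}) to handle the factor $1/\bar R$, relying on the preserved axis condition $\bar v^\theta(0,z,t)=0$. For the boundary data, $\tfrac{1}{2}\bar C(t)^2/\bar R^2$ is bounded in $H^{7/2}(\Gamma)$ by $C(M)$ via the trace theorem and the already-established bound $\abs{\bar C(t)}\leq C(M)$, while $\norm{q}_0$ is absorbed using a harmonic extension of the Dirichlet data and a Poincar\'e-type estimate. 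Combining these bounds yields the claimed $\norm{q}_4^2 \leq C(M)P(\sup_{[0,T]}\mathfrak E(t))$.
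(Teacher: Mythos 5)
Your overall strategy coincides with the paper's: apply $\Div_{\bar\a}$ to the momentum equation, use $\Div_{\bar\a}\nu=0$ to reduce $\Div_{\bar\a}\partial_t\nu$ to commutator terms, decompose the magnetic forcing as $b_0\cdot\nabla\big(\Div_{\bar\a}(b_0\cdot\nabla\zeta)\big)+[\Div_{\bar\a},b_0\cdot\nabla](b_0\cdot\nabla\zeta)$ to avoid losing a derivative on $\zeta$, and handle the boundary data via a harmonic extension (the paper's $\hat h$, bounded in $H^4_{r,z}$ by $C(M)$ through the trace theorem). Up to this point you have reproduced the paper's setup \eqref{pressure} with $G_1$ and $G_2=\bar J\Div_{\bar\a}(b_0\cdot\nabla\zeta)$ essentially verbatim.

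The gap is the step ``standard elliptic regularity yields $\norm{q}_4\ls\norm{\mathcal F}_2+\abs{q}_{7/2}+\norm{q}_0$.'' This is precisely the statement the paper has to prove, and it occupies almost the entire proof of Proposition \ref{pr}. The operator here is $\frac{1}{\bar R}\partial_{a_i}(\bar R\bar E_{ij}\partial_{a_j}\,\cdot\,)$ acting on functions of $(r,z)$, measured in the weighted spaces $H^k_{r,z}$ with measure $r\,dr\,dz$; the weight degenerates at the axis $r=0$, so off-the-shelf $H^4$ elliptic regularity on a 2D domain does not apply, and the equivalence with Cartesian Sobolev norms of the corresponding 3D axisymmetric function is exactly what fails to be automatic at high order. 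The paper therefore proceeds by hand: a variational ($L^2$) estimate after testing with $\tfrac{\bar R}{r}\hat q$ — note that here the term $b_0\cdot\nabla G_2$ is integrated by parts onto $\hat q$, so only $\norm{G_2}_0$ is needed, another reason the naive ``source in $H^2$'' formulation is not what is actually used; then tangential estimates by applying $\partial_z^k$, $k=1,2,3$, to the weak formulation; and finally recovery of the radial derivatives by rewriting the equation as the first-order relation \eqref{pressure3} for $\mathfrak g=\bar E_{1j}\partial_{a_j}q/\bar R$ and exploiting the $r$-weighted integration by parts \eqref{labelss} to extract $\norm{\bar R\partial_r\mathfrak g}_0+\norm{\mathfrak g}_0$, iterating in $\partial_z$ and $\partial_r$ up to fourth order. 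Without carrying out (or at least justifying) this tangential-then-normal bootstrap near the axis, your proof is incomplete at its decisive step; the rest of your outline (the commutator bounds, the Hardy inequality for the $1/\bar R$ factors, the $H^{3.5}(\Gamma)$ control of the Dirichlet data) is consistent with the paper.
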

\begin{proof}
Taking $\bar J\Div_{\ak}$ on the second equation of \eqref{sub21} to get:
\begin{equation}
	\begin{cases}
		\dfrac{1}{\bar R}\partial_{a_i}(\bar R\bar E_{ij}\partial_{a_j} q)=G_1+b_0\cdot\nabla G_2 \\
	q|_{\Gamma}=\dfrac{\bar C^2(t)}{\bar R^2}
\end{cases}
	\label{pressure}
\end{equation}
where
\begin{equation*}
	\begin{split}
		&\bar E_{ij}:=\bar J\bar\a_{\ell i}\bar\a_{\ell j},\\
		&G_1:=\bar J\partial_t \bar\a_{i\ell}\partial_{a_{\ell}}\nu_i-\bar J\dfrac{v^r\partial_t \bar R}{\bar R^2}+\left(\dfrac{\bar J}{\bar R}+\bar J\pa^{\bar \a}_R\right)\left(\dfrac{(\bar v^{\theta})^2}{\bar R}-\bar R(b_0\cdot\nabla\bar\Theta)^2\right)+\left[\bar J\Div_{\ak}, b_0\cdot\nabla\right]b_0\cdot\nabla\zeta\\
	&G_2:=\bar J\Div_{\ak}(b_0\cdot\nabla\zeta).
\end{split}
\end{equation*}
Note that  by \eqref{inin3} the matrix $\bar E$ is symmetric and positive.

We denote $\hat h(r,z,t)$ as the harmonic extension of $\frac{\bar C^2(t)}{\bar R^2}$:
\begin{equation*}
\begin{cases}
(\partial_r^2+\dfrac{1}{r}\partial_r+\partial_z^2)\hat h =0\,\,&\text{in } \Omega,\\
\hat h= \frac{\bar C^2(t)}{\bar R^2}\,\,&\text{on } \Gamma,
\end{cases}
\end{equation*}
and by the Trace theorem, we have
\begin{equation}
\label{hq}
	\norm{\hat h}_4^2\ls \abs{\dfrac{\bar C^2(t)}{\bar R^2(R_0,z)}}_{3.5}^2\leq C(M).
\end{equation}
And then $\hat q = q-\hat h$ satisfying the following elliptic equation with zero Dirichlet boundary condition:
\begin{equation}
	\dfrac{1}{\bar R}\partial_{a_i}(\bar R\bar E_{ij}\partial_{a_j}\hat q)=G_1+b_0\cdot\nabla G_2+	\dfrac{1}{\bar R}\partial_{a_i}(\bar R\bar E_{ij}\partial_{a_j} \hat h)
	\label{pressure2}
\end{equation}
Timing $\frac{\bar R}{r}\hat q$ on the equation \eqref{pressure2}, integrating on $\Omega$ and using integration-by-parts, we have
\begin{equation*}
	\int_{\Omega}\dfrac{\bar R}{r}\bar E_{ij}\partial_{a_j} \hat q\partial_{a_{i}} \hat q\,dx=\int_{\Omega}\dfrac{\bar R}{r}G_1\hat q\,dx+\int_{\Omega} \dfrac{\bar R}{r}G_2 b_0\cdot\nabla\hat q\,dx+\int_{\Omega}\dfrac{\bar R}{r}\bar E_{ij}\partial_{a_j} \hat h\partial_{a_i}\hat q\,dx	
\end{equation*}
Thus, with a priori assumption \eqref{inin3} and Poincare's inequality, we arrive at
\begin{equation*}
	\norm{D \hat q}_0^2\ls \norm{G_1}_0^2+\norm{G_2}_0^2+\norm{\bar E_{ij}\partial_{a_j} \hat h}_0^2\leq C(M)P\left(\sup_{t\in [0,T]}\mathfrak{E}(t)\right)
\end{equation*}
and hence with \eqref{hq}, we have
\begin{equation}
\norm{q}_1^2 \leq C(M)P\left(\sup_{t\in [0,T]}\mathfrak{E}(t)\right)
	\label{dfddd}
\end{equation}
Next, applying $\partial_z^{k}$, $k=1,2,3$ to the equation \eqref{pressure2} leads to
\begin{equation*}
\begin{split}
	\dfrac{1}{\bar R}\partial_{a_i}(\bar R\bar E_{ij}\partial_{a_j} \partial_z^k\hat q)=&\partial_z^kG_1+b_0\cdot\nabla \partial_z^kG_2+	\dfrac{1}{\bar R}\partial_{a_i}(\bar R\bar E_{ij}\partial_{a_j} \partial_z^k\hat h)\\&+\dfrac{1}{\bar R}\partial_{a_i}\left(\left[\partial_z^k,\bar R\bar E_{ij}\partial_{a_j}\right](\hat h-\hat q)\right)+\left[\partial_z^k, b_0\cdot\nabla\right]G_2.
\end{split}
\end{equation*}
Thus, similarly, we obtain
\begin{equation*}
\begin{split}
\norm{\partial_z^k \hat q}_1^2\ls& \norm{\partial_z^k G_1}_0^2+\norm{\partial_z^k G_2}_0^2+\norm{\bar E_{ij}\partial_{a_j}\partial_z^k \hat h}_0^2+\norm{\left[\partial_z^k,\bar R\bar E_{ij}\partial_{a_j}\right](\hat h-\hat q)}_0^2\\&+\norm{[\partial_z^k, b_0\cdot\nabla]G_2}_0^2,
\end{split}
\end{equation*}
and then
\begin{equation}
	\label{prs0}
\norm{\partial_z^k \hat q}_1^2\leq C(M)\left(P\left(\sup_{t\in [0,T]}\mathfrak{E}(t)\right)+\norm{\partial_z^{k-1}\hat q}_1^2\right).
\end{equation}
Combining with \eqref{hq} again, we have
\begin{equation}
\label{prs}
\norm{\partial_z^k q}_1^2\leq C(M)\left(P\left(\sup_{t\in [0,T]}\mathfrak{E}^{\kappa}(t)\right)+\norm{\partial_z^{k-1} q}_1^2\right).
\end{equation}
In order to obtain other high order derivatives of $q$, we denote $\mathfrak g=\dfrac{\bar E_{1j}\partial_{a_j} q}{\bar R}$ and rewrite the first equation of \eqref{pressure} as
\begin{equation}
	\label{pressure3}
	\dfrac{1}{\bar R}\partial_r(\bar R^2 \mathfrak g)=\bar R\partial_r\mathfrak g+2\partial_r\bar R\mathfrak g=\mathfrak G
\end{equation}
where
\begin{equation}
\mathfrak G:=\left(G_1+b_0\cdot\nabla G_2+\dfrac{1}{\bar R}\partial_z(\bar R\bar E_{2j}\partial_{a_j} q)\right).
\end{equation}
Then we obtain
\begin{equation}
	\norm{\bar R\partial_r\mathfrak g+2\partial_r\bar R\mathfrak g}_0^2\leq  \norm{\mathfrak G}_0^2\leq C(M)P\left(\sup_{t\in [0,T]}\mathfrak{E}(t)\right).
	\label{jhieoe}
\end{equation}
With integration-by-parts and a priori assumption \eqref{apbound}, \eqref{inin3},  we have
\begin{equation}
\begin{split}
	&\int_{\Omega}\left(\bar R\partial_r\mathfrak g+2\partial_r\bar R\mathfrak g\right)^2\,dx\\=	&\norm{\bar R \partial_r\mathfrak g}_0^2+4\norm{\partial_r\bar R\mathfrak g}_0^2+\int_{\Omega}4\bar R\partial_r\mathfrak g\partial_r\bar R\mathfrak g\,dx\\=&\norm{\bar R \partial_r\mathfrak g}_0^2+2\norm{\partial_r\bar R\mathfrak g}_0^2-2\int_{\mathbb T}\int_0^{R_0}\partial_r(r\partial_r\bar R)\bar R\abs{\mathfrak g}^2\,drdz+2\int_{\mathbb T}R_0\bar R(R_0,z)\abs{\mathfrak g}^2(R_0, z)\,dz\\\geq &\norm{\bar R \partial_r\mathfrak g}_0^2+2\norm{\partial_r\bar R\mathfrak g}_0^2-CT\sup_{t\in [0,T] }\norm{\partial_r(r\partial_r \partial_t \bar R)}_{L^{\infty}}\norm{\mathfrak g}_0^2\\\geq & \norm{\bar R \partial_r\mathfrak g}_0^2+\norm{\mathfrak g}_0^2
\end{split}
	\label{labelss}
\end{equation}
by taking $T$ sufficiently small (only depend on $M$).
Thus,  we arrive at
\begin{equation}
	\norm{\bar R \partial_r\mathfrak g}_0^2+\norm{\mathfrak g}_0^2\leq C(M)P\left(\sup_{t\in [0,T]}\mathfrak{E}(t)\right)
	\label{ef}
\end{equation}
and as a consequence, we have
	\begin{equation}
		\norm{\partial_r (\bar E_{1j}\partial_{a_j}q)}_0^2
\leq \norm{\partial_r\bar R\mathfrak g}_0^2+\norm{\bar R\partial_r\mathfrak g}_0^2\leq C(M)P\left(\sup_{t\in [0,T]}\mathfrak{E}(t)\right).
		\label{jifiefjoq}
	\end{equation}
Then by using \eqref{prs} and a priori assumption \eqref{inin3} again, we have
\begin{equation}
\label{riep}
\norm{\partial^2_r q}_0^2\leq \norm{\dfrac{1}{\bar E_{11}}\partial_r (\bar E_{1j}\partial_{a_j} q)}_0^2+\norm{\dfrac{1}{\bar E_{11}}\partial_r \bar E_{11}\partial_r q}_0^2+\norm{\dfrac{1}{\bar E_{11}}\partial_r (\bar E_{12}\partial_z q)}_0^2 \leq C(M)P\left(\sup_{t\in [0,T]}\mathfrak{E}(t)\right).
\end{equation}
Next, by acting $\partial_z, \partial_z^2$ on the equation \eqref{pressure3}, we have
\begin{equation*}
\begin{split}
\bar R\partial_r\partial_z\mathfrak g+2\partial_r\bar R\partial_z\mathfrak g=&-\partial_z\bar R\partial_r\mathfrak g-2\partial_z\partial_r\bar R\mathfrak g+\partial_z \mathfrak G\\
\bar R\partial_r\partial_z^2\mathfrak g+2\partial_r\bar R\partial_z^2\mathfrak g=&-\partial_z\bar R\partial_r\partial_z\mathfrak g-2\partial_r\partial_z\bar R\partial_z\mathfrak g+\partial_z\left(-\partial_z\bar R\partial_r\mathfrak g-2\partial_z\partial_r\bar R\mathfrak g+\partial_z \mathfrak G\right)
\end{split}
\end{equation*}
Then by a similar approach from \eqref{jhieoe} to \eqref{riep}, for $k=1,2,$ we can obtain
\begin{equation*}
	\norm{\bar R \partial_r\partial_z^k\mathfrak g}_0^2+\norm{\partial_z^k\mathfrak g}_0^2\leq C(M)P\left(\sup_{t\in [0,T]}\mathfrak{E}(t)\right)
\end{equation*}
and hence
\begin{equation*}
\norm{\partial_r^2\partial_z^2 q}_0^2\leq C(M)P\left(\sup_{t\in [0,T]}\mathfrak{E}(t)\right).
\end{equation*}
Finally, we act $\partial_r, \partial_r^2$ on the equation \eqref{pressure3} to obtain for $k=1,2,$
\begin{equation}
\norm{\bar R \partial_r^{k+1}\mathfrak g}_0^2+\norm{\partial_r^k\mathfrak g}_0^2\leq C(M)P\left(\sup_{t\in [0,T]}\mathfrak{E}(t)\right)
	\label{ipre}
\end{equation}
and hence
\begin{equation}
\norm{\partial_r^4 q}_0^2+\norm{\partial_r^3\partial_z q}_0^2\leq C(M)P\left(\sup_{t\in [0,T]}\mathfrak{E}(t)\right).
\end{equation}
Combining with \eqref{prs}, we prove the proposition.
\end{proof}
\subsubsection{Tangential estimates for $\nu=(v^r, v^z)$}
We start with the basic $L^2$ energy estimates.
\begin{proposition}\label{basic}
For $t\in [0,T]$, it holds that
\begin{equation}\label{00estimate}
\norm{\nu(t)}_0^2+\norm{(b_0\cdot\nabla\zeta)(t)}_0^2\leq M_0+TC(M)P\left(\sup_{t\in[0,T]}\mathfrak{E}(t)\right).
\end{equation}
\end{proposition}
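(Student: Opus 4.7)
The plan is to derive the basic $L^2$ energy identity by testing the momentum equation of \eqref{sub21} against $\nu$ in the axisymmetric $L^2$ inner product with measure $r\,dr\,dz$, which gives
\begin{equation*}
\tfrac{1}{2}\tfrac{d}{dt}\|\nu\|_0^2 + \int_\Omega \nabla_{\bar\a} q \cdot \nu\,dV - \int_\Omega (b_0\cdot\nabla)^2\zeta \cdot \nu\,dV = \int_\Omega F\cdot\nu\,dV,
\end{equation*}
where $F = \bigl((\bar v^{\theta})^2/\bar R - \bar R(b_0\cdot\nabla\bar\Theta)^2,\,0\bigr)$. The aim is to turn the magnetic term into $\tfrac{d}{dt}\|b_0\cdot\nabla\zeta\|_0^2$, reduce the pressure term to a harmless boundary integral, and bound the forcing by $C(M)P(\mathfrak E(t))$.

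I handle the magnetic term first, by integrating by parts twice and using that $b_0\cdot\nabla = b_0^r\partial_r + b_0^z\partial_z + \tfrac{1}{r}b_0^\theta\partial_\theta$ is antisymmetric on $L^2(\Omega)$ when acting on axially symmetric functions. The antisymmetry rests on \eqref{bcond}: the relation $\partial_r b_0^r + b_0^r/r + \partial_z b_0^z = 0$ kills the order-zero term generated by transposing $\partial_r,\partial_z$ past the $r$-weight, and $b_0^r|_\Gamma = 0$ eliminates the boundary trace at $r=R_0$; the $\partial_\theta$ piece contributes nothing on axially symmetric data. Using $\nu = \partial_t\zeta$ and the time-independence of $b_0$ then turns $-\int_\Omega (b_0\cdot\nabla)^2\zeta\cdot\nu\,dV$ into exactly $\tfrac12\tfrac{d}{dt}\|b_0\cdot\nabla\zeta\|_0^2$.

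For the pressure term---the technical heart of the estimate---I will integrate by parts in $(r,z)$ against $r\,dr\,dz$. Combining the Piola identity \eqref{polia} with the linearized incompressibility $\Div_{\bar\a}\nu = 0$ and the relation $\bar J = r/\bar R$ (inherited by the iterative scheme from $\bar\zeta(0) = (r,z)$ together with the divergence-free structure of the previous iterate), a direct computation yields the Piola-type cancellation $\partial_{a_j}(r\bar\a_{ij}\nu_i) = 0$. The interior contribution therefore vanishes, and substituting the Dirichlet data $q|_\Gamma = \tfrac{1}{2}\bar C^2(t)/\bar R^2$ leaves only the boundary integral
\begin{equation*}
\int_\Omega \nabla_{\bar\a} q\cdot\nu\,dV = \pi R_0 \int_{\mathbb T} \frac{\bar C^2(t)}{\bar R^2}\, \bar\a_{i1}\nu_i\Big|_{r=R_0}\,dz,
\end{equation*}
which is controlled by $C(M)|\nu|_{L^2(\Gamma)} \le C(M)\|\nu\|_1 \le C(M)\sqrt{\mathfrak E(t)}$ via the trace theorem and the a priori bounds \eqref{apbound}, \eqref{inin3}.

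The forcing term obeys $|\int_\Omega F\cdot\nu\,dV| \le \|F\|_0\|\nu\|_0$. The apparently singular factor $1/\bar R \sim 1/r$ appearing in $(\bar v^\theta)^2/\bar R$ and, after factoring, in $\bar R(b_0\cdot\nabla\bar\Theta)^2 = (\bar R\,b_0\cdot\nabla\bar\Theta)^2/\bar R$ is tamed by the higher-order Hardy inequality of Lemma \ref{hardy}, since both $\bar v^\theta$ and $\bar R\,b_0\cdot\nabla\bar\Theta$ vanish on the axis $r=0$ by propagation from the initial data. This yields $\|F\|_0 \le C(M)$, so altogether
\begin{equation*}
\tfrac{d}{dt}\bigl(\|\nu\|_0^2 + \|b_0\cdot\nabla\zeta\|_0^2\bigr) \le C(M)P(\mathfrak E(t)).
\end{equation*}
Integrating from $0$ to $t\le T$, and using $\|\nu(0)\|_0^2 + \|b_0\cdot\nabla\zeta(0)\|_0^2 = \|(v_0^r, v_0^z)\|_0^2 + \|(b_0^r, b_0^z)\|_0^2 \le M_0$ (since $\zeta(0)=(r,z)$ forces $b_0\cdot\nabla\zeta(0) = (b_0^r, b_0^z)$), gives the stated estimate. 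I expect the pressure integration by parts to be the main obstacle: careful book-keeping of the Piola identity, of the $r$-weighted axisymmetric geometry, and of the persistence of $\bar J = r/\bar R$ along the iteration is needed so that no interior pressure term survives to cost regularity at this base energy level.
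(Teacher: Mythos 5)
Your treatment of the magnetic term, the forcing term (via Lemma \ref{hardy}), and the initial data matches the paper's proof of Proposition \ref{basic}. The gap is in the pressure term. Your claimed cancellation $\partial_{a_j}(r\bar\a_{ij}\nu_i)=0$ rests on the identity $\bar J = r/\bar R$, which is valid for the \emph{nonlinear} flow (where $\partial_t J = -J v^r/R$ follows from $\Div_{\a}\nu=0$ with $\a=\a(\zeta)$ the map's own cofactor), but \emph{not} for the linearized system \eqref{sub21}: there $\bar\zeta$ is a given background constrained only by \eqref{apbound} and \eqref{inin3}, and in the iteration \eqref{its1} the previous iterate satisfies $\Div_{\a^{(n-1)}}\nu^{(n)}=0$, not $\Div_{\a^{(n)}}\nu^{(n)}=0$, so $\partial_t\bar J \neq -\bar J\,\partial_t\bar R/\bar R$ and $\bar J\bar R\neq r$ in general. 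The exact consequence of the Piola identity \eqref{polia} together with $\Div_{\bar\a}\nu=0$ is $\partial_{a_j}(\bar J\bar R\,\bar\a_{ij}\nu_i)=0$, so after integrating by parts against the measure $r\,dr\,dz$ an interior term proportional to $\partial_{a_j}\big((r-\bar J\bar R)\bar\a_{ij}\nu_i\big)$ survives (compare the analogous surviving term $\int_\Omega \partial_{a_j}(\tfrac{r}{\bar R}\bar\a_{ij})\mathcal Q\tfrac{\bar R}{r}\mathcal V_i$ in the paper's own tangential estimate \eqref{j}). Controlling that leftover requires an interior bound on $q$, i.e.\ Proposition \ref{pr}, which defeats the stated purpose of your manoeuvre.

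Once you accept that Proposition \ref{pr} is needed, the whole integration-by-parts apparatus is unnecessary at this base level: the paper simply writes
\begin{equation*}
-\int_{\Omega}\nabla_{\bar\a}q\cdot\nu \;\leq\; C\norm{\bar\a}_{L^\infty}\norm{q}_1\norm{\nu}_0 \;\leq\; C(M)P\Big(\sup_{t\in[0,T]}\mathfrak E(t)\Big),
\end{equation*}
since the pressure estimate \eqref{nueq} (established in Section \ref{pressure1} before the energy estimates) already controls $\norm{q}_1$. Your concern that an interior pressure term would ``cost regularity'' is misplaced here: there is no derivative loss at the $L^2$ level, and the delicate boundary/duality arguments are reserved for the top-order tangential estimate where the good unknowns $\mathcal V,\mathcal Q$ and the non-collinearity condition \eqref{stc} genuinely are needed. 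I recommend replacing your pressure step by the direct Cauchy--Schwarz bound above.
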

\begin{proof}
Taking the $L^2(\Omega)$ inner product of the second equation in \eqref{sub1} with $\nu$ yields
\begin{equation}\label{hhl1}
 \dfrac{1}{2}\dfrac{d}{dt}\int_{\Omega}\abs{\nu}^2 +\int_{\Omega} \nabla_{\bar\a} q\cdot \nu -\int_{\Omega}(b_0\cdot\nabla)^2\zeta \cdot \nu =\int_{\Omega}\left(\dfrac{(\bar v^{\theta})^2}{\bar R}-\bar R(b_0\cdot\nabla \bar\Theta)^2\right) v^r.
\end{equation}
Using the pressure estimates \eqref{nueq}, we have
\begin{equation}
-\int_{\Omega} \nabla_{\bar\a} q\cdot \nu\le\norm{D \a^\kappa}_{L^{\infty}}\norm{\nu}_0\norm{q}_1\le C(M)P\left(\sup_{t\in[0,T]}\mathfrak{E}(t)\right).
\end{equation}
By Hardy's inequality, we have
\begin{equation}
\int_{\Omega}\left(\dfrac{(\bar v^{\theta})^2}{\bar R}-\bar R(b_0\cdot\nabla\bar\Theta)^2\right) v^r \le C(M)P\left(\sup_{t\in[0,T]}\mathfrak{E}(t)\right).
\end{equation}
Since $b_0$ satisfies \eqref{bcond}, by the integration by parts, we obtain
\begin{equation}\label{hhl3}
\begin{split}
-\int_{\Omega}(b_0\cdot\nabla)^2\zeta\cdot \nu&=\int_{\Omega}b_0\cdot\nabla\zeta_ib_0\cdot\nabla \nu_i\\&=\int_{\Omega}b_0\cdot\nabla\zeta_ib_0\cdot\nabla \partial_t\zeta_i\,dx
\\&=\dfrac{1}{2}\dfrac{d}{dt}\int_{\Omega}\abs{b_0\cdot\nabla\zeta}^2\,dx
\end{split}
\end{equation}
Then \eqref{hhl1}--\eqref{hhl3} implies,
\begin{equation}
\dfrac{d}{dt}\int_{\Omega}\abs{\nu}^2+\abs{b_0\cdot\nabla\zeta}^2 \leq C(M)P\left(\sup_{t\in[0,T]}\mathfrak E(t)\right).
\end{equation}
Integrating directly in time of the above yields \eqref{00estimate}.
\end{proof}
In order to perform higher order tangential energy estimates, one needs to compute the equations satisfied by $(\bp^4 \nu, \bp^4 q, \bp^4 \zeta)$, which requires to commutate $\bp^4$ with each term of $\pa^{\bar\a}_{\zeta_i}$. It is thus useful to establish the following general expressions and estimates for commutators.
 we have
\begin{equation}
	\bp^4 (\pa^{\bar\a}_{\zeta_i}g) =  \pa^{\bar\a}_{\zeta_i} \bp^4 g + \bp^4 {\bar\a}_{ij} \pa_{a_j} g+\left[\bp^4, {\bar\a}_{ij} ,\pa_{a_j} g\right].
\end{equation}
By the identity \eqref{partialF}, we have that
\begin{equation}
\begin{split}
&\bp^4 ({\bar\a}_{ij} \pa_{a_j} g)=-\bp^3({\bar\a}_{i\ell}\bp\pa_{a_{\ell}}  \zeta_m  {\bar\a}_{mj})\pa_{a_j} g
\\&\quad=-{\bar\a}_{i\ell}\pa_{a_\ell} \bp^4\zeta^m{\bar\a}_{mj}\pa_{a_j} g
-\left[\bp^3, {\bar\a}_{i\ell}{\bar\a}_{mj} \right]\bp \pa_{a_\ell}  \zeta_m \pa_{a_j} g
\\&\quad=-\pa^{\bar\a}_{\zeta_i}( \bp^4\zeta\cdot\nabla_{\bar\a} g)+\bp^4\zeta\cdot\nabla_{\bar\a}( \pa^{\bar\a}_{\zeta_i} g)
	-\left[\bp^3, {\bar\a}_{i\ell}{\bar\a}_{mj}\right]\bp\pa_{a_\ell} \zeta_m\pa_{a_j} g.\end{split}
\end{equation}
It then holds that
\begin{equation}\label{commf}
	\bp^4 (\pa^{\bar\a}_{\zeta_i}g) =  \pa^{\bar\a}_{\zeta_i}\left(\bp^4 g- \bp^4\zeta\cdot\nabla_{\bar\a} g\right)+ \mathcal{C}_i(g).
\end{equation}
where the commutator $\mathcal{C}_i(g)$ is given by
\begin{equation}
	\mathcal{C}_i(g)=\left[\bp^4, {{\bar\a}_{ij}},\pa_{a_j} g\right]-\bp^4\zeta\cdot\nabla_{\bar\a}( \pa^{\bar\a}_{\zeta_i}g)
	+\left[\bp^3, {\bar\a}_{i\ell}\bar\a_{mj}\right]\bp\pa_{a_\ell} \zeta_m\pa_{a_j} g
\end{equation}
It was first observed by Alinhac \cite{Alinhac} that the highest order term of $\zeta$ will be cancelled when one uses the good unknown $\bp^4 g- \bp^4\zeta\cdot\nabla_{\bar\a} g$, which allows one to perform high order energy estimates.

The following lemma deals with the estimates of the commutator $\mathcal C_i(g)$.
\begin{lemma}
The following estimate holds:
\begin{equation}\label{comest}
	\norm{\mathcal C_i (g)}_0\leq  P\left(\norm{(\bar R,\bar Z)}_4\right) \norm{g}_4.
\end{equation}
\end{lemma}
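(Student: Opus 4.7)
The plan is to estimate each of the three summands of $\mathcal{C}_i(g)$ separately. The recurring inputs are (a) $\bar{\mathcal A}=\bar{\mathcal F}^{-T}$ is a smooth rational function of $D\bar\zeta=D(\bar R,\bar Z)$, so the algebra property of $H^s$ ($s\geq 2$) yields $\|\bar{\mathcal A}\|_3\lesssim P(\|(\bar R,\bar Z)\|_4)$; and (b) the Sobolev embeddings $H^2\hookrightarrow L^\infty$ and $H^1\hookrightarrow L^4$, which are valid in our three-dimensional setting for the axisymmetric Sobolev spaces $H^k_{r,z}$.

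For the symmetric commutator $[\bp^4,\bar{\mathcal A}_{ij},\partial_{a_j}g]$, I would expand it as a linear combination
\begin{equation*}
\sum_{\substack{k_1+k_2=4 \\ 1\leq k_1,k_2\leq 3}} c_{k_1,k_2}\,\bp^{k_1}\bar{\mathcal A}_{ij}\cdot \bp^{k_2}\partial_{a_j}g
\end{equation*}
and treat the three cases $(1,3),(2,2),(3,1)$ by H\"older. For $(k_1,k_2)\in\{(1,3),(3,1)\}$ one uses $L^\infty\cdot L^2$ with $H^2\hookrightarrow L^\infty$ applied to the factor with fewer derivatives; for $(2,2)$ one uses $L^4\cdot L^4$ with $H^1\hookrightarrow L^4$. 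In every case the bound is $\|\bar{\mathcal A}\|_3\|g\|_4$. The third summand $[\bp^3,\bar{\mathcal A}_{i\ell}\bar{\mathcal A}_{mj}](\bp\partial_{a_\ell}\zeta_m)\cdot\partial_{a_j}g$ is handled analogously: the order-three commutator $[\bp^3,F]\phi$ with $F=\bar{\mathcal A}_{i\ell}\bar{\mathcal A}_{mj}$ and $\phi=\bp\partial_{a_\ell}\zeta_m$ is bounded by $\|F\|_3\|\phi\|_2\lesssim P(\|(\bar R,\bar Z)\|_4)\|\zeta\|_4$, after which one multiplies by the $L^\infty$ factor $\partial_{a_j}g$ using $\|Dg\|_{L^\infty}\lesssim\|g\|_3$.

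For the middle term $\bp^4\zeta\cdot\nabla_{\bar{\mathcal A}}(\partial^{\bar{\mathcal A}}_{\zeta_i}g)$, I would apply H\"older in the form $\|\bp^4\zeta\cdot h\|_0\leq\|\bp^4\zeta\|_0\|h\|_{L^\infty}$, where
\begin{equation*}
h_j=\bar{\mathcal A}_{jk}\bar{\mathcal A}_{i\ell}\partial_{a_k}\partial_{a_\ell}g+\bar{\mathcal A}_{jk}(\partial_{a_k}\bar{\mathcal A}_{i\ell})\partial_{a_\ell}g.
\end{equation*}
The first factor obeys $\|\bp^4\zeta\|_0\leq\|\zeta\|_4$, which is part of $\mathfrak E$; the second is bounded in $L^\infty$ by $P(\|\bar{\mathcal A}\|_{W^{1,\infty}})\|g\|_4$, using $\|D^2g\|_{L^\infty}\lesssim\|g\|_4$ (via $H^2\hookrightarrow L^\infty$) and $\|\bar{\mathcal A}\|_{W^{1,\infty}}\lesssim\|\bar{\mathcal A}\|_3\lesssim P(\|(\bar R,\bar Z)\|_4)$.

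The only mild obstacle I anticipate is that the stated symmetric commutator estimate \eqref{co2} with $k=4$, read mechanically, would demand either $\|\bar{\mathcal A}\|_4$ or $\|g\|_5$, neither of which is available to us. The fix, as indicated above, is to redo the commutator expansion by hand and to exploit the sharper Sobolev embedding $H^1\hookrightarrow L^4$ for the balanced case $(k_1,k_2)=(2,2)$; this is precisely the refinement present in the proof of Lemma A.1 of \cite{Wang_15} on which \eqref{co2} is based. Summing the three contributions yields $\|\mathcal{C}_i(g)\|_0\lesssim P(\|(\bar R,\bar Z)\|_4)\|g\|_4$, as claimed.
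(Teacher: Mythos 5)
Your proof is correct and takes essentially the same route as the paper: the same three-term decomposition of $\mathcal C_i(g)$, the same H\"older splitting $\norm{\bp^4\bar\zeta}_0\norm{\cdot}_{L^\infty}$ for the middle term, and the same Leibniz--H\"older--embedding bounds for the two commutators, for which the paper simply cites \eqref{co2} and \eqref{co1}. Your side remark is accurate: read with their stated indices, \eqref{co2} (and likewise \eqref{co1}) applied here would call for $\norm{g}_5$ or $\norm{\bar{\mathcal{A}}}_4$, so the paper's citation is loose, and your hand expansion using $H^1\hookrightarrow L^4$ on the balanced term $(k_1,k_2)=(2,2)$ is precisely the sharp version that makes the step rigorous.
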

\begin{proof}
First, by the commutator estimates \eqref{co2}, we have
\begin{equation}\label{Calpha1}
	\norm{[\bp^4, \bar\a_{ij}, \partial_{a_j} g]}_0\ls\norm{\bar\a}_{3}\norm{D g}_{3}\leq P\left(\norm{(\bar R, \bar Z)}_4\right) \norm{g}_4.
\end{equation}
Next, we get
\begin{equation}
	\norm{\bp^4\zeta\cdot\nabla_{\bar\a}( \pa^{\bar\a}_{\zeta_i} g)}_0\le \norm{\bp^4\zeta}_0\norm{\nabla_{\bar\a}( \pa^{\bar\a}_{\zeta_i} g)}_{L^{\infty} } \leq P\left(\norm{(\bar R, \bar Z)}_4\right) \norm{g}_4.
\end{equation}
Finally, by the commutator estimates \eqref{co1}, we obtain
\begin{equation}\label{Calpha3}
	\norm{\left[\bp^3, \bar\a_{i\ell}\bar\a_{mj}\right]\bp\pa_{a_\ell}\zeta^m\pa_{a_j} g}_0\le \norm{\left[\bp^3, \bar\a_{i\ell}\bar\a_{mj}\right]\bp\pa_{a_\ell} \zeta_m}_0\norm{Dg}_{L^{\infty} }
	\leq P\left(\norm{(\bar R,\bar Z)}_4\right) \norm{g}_3.
\end{equation}

Consequently, the estimate \eqref{comest} follows by collecting \eqref{Calpha1}--\eqref{Calpha3}.
\end{proof}

We now introduce the good unknowns
\begin{equation}
\label{gun}
\mathcal{V}=\bp^4 \nu- \bp^4\zeta\cdot\nabla_{\bar\a} \nu,\quad \mathcal{Q}=\bp^4 q- \bp^4\zeta\cdot\nabla_{\bar\a} q.
\end{equation}

With the condition \eqref{qbdd}, we have \begin{equation}
	\mathcal{Q}=\dfrac{1}{2}\bp^4\left(\dfrac{\bar C^2(t)}{\bar R^2}\right)-\bp^4\zeta\cdot\nabla_{\bar\a} q\quad\text{on } \Gamma.
	\label{qbdc}
\end{equation}
Applying $\bp^4$ to the second equation of \eqref{sub21}, by \eqref{commf}, one gets
\begin{equation}\label{eqValpha}
	\begin{split}
		\partial_t\mathcal{V} &+ \nabla_{\bar\a}\mathcal{Q}-(b_0\cdot\nabla)\left(\bp^4(b_0\cdot\nabla \zeta)\right)\\&=F:=  \dt\left(\bp^4\zeta\cdot\nabla_{\bar\a} \nu\right) - \mathcal{C}_i(q) +\left[\bp^4, b_0\cdot\nabla\right]b_0\cdot\nabla \zeta+\bp^4\left(\dfrac{(\bar v^{\theta})^2}{\bar R}-\bar R(b_0\cdot\nabla\bar\Theta)^2\right),
	%	\\\partial_t\mathcal{V}^z + \pa^\a_Z \mathcal{Q}-(b_0\cdot\nabla)\left(\bp^4(b_0\cdot\nabla Z)\right)=g_2:=&\dt\left(\bp^4f\cdot\nabla_f v^z\right) - \mathcal{C}_i(q)+\left[\bp^4, b_0\cdot\nabla\right]b_0\cdot\nabla Z,
\end{split}
\end{equation}
and
\begin{equation} \label{divValpha}
	\Div_{\bar\a}\mathcal{V}=g_3:=- \mathcal{C}_i(v^r)-\mathcal{C}_i(v^z)-\left(\bp^4\left(\dfrac{v^r}{\bar R}\right)-\dfrac{\mathcal{V}_1}{\bar R}\right) .
\end{equation}

We shall now derive the $\bp^4$-energy estimates and have the following proposition
\begin{proposition}\label{te}
For $t\in [0,T]$, it holds that
\begin{equation}
\label{teee}
\norm{\partial_z^4 \nu}_0^2+\norm{\partial_z^4(b_0\cdot\nabla \zeta)(t)}_0^2\leq M_0+C(M)TP\left(\sup_{t\in[0,T]}\mathfrak{E}(t)\right).
\end{equation}
\end{proposition}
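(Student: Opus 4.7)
The plan is to do the $\partial_z^4$ energy estimate using the good unknowns $\mathcal V, \mathcal Q$ already defined in \eqref{gun}, since equations \eqref{eqValpha} and \eqref{divValpha} have already isolated the Alinhac cancellation. I take the $L^2(\Omega)$ inner product of \eqref{eqValpha} with the weighted field $\frac{R}{r}\mathcal V$ (so the divergence form $\Div_{\bar\a}=\frac{1}{\bar R}\pa_{a_i}(\bar R\,\cdot)$ integrates cleanly against $r\,dx$) and treat the three resulting blocks separately:
\begin{equation*}
\underbrace{\dfrac{1}{2}\dfrac{d}{dt}\int_\Omega |\mathcal V|^2}_{\text{kinetic}}
\;+\;\underbrace{\int_\Omega \mathcal V\cdot \nabla_{\bar\a}\mathcal Q}_{\text{pressure}}
\;-\;\underbrace{\int_\Omega \mathcal V\cdot (b_0\!\cdot\!\nabla)\bp^4(b_0\!\cdot\!\nabla\zeta)}_{\text{magnetic}}
\;=\;\int_\Omega \mathcal V\cdot F.
\end{equation*}

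For the magnetic block I integrate by parts: since $b_0^r=0$ on $\Gamma$ and $\partial_r(rb_0^r)+\partial_z(rb_0^z)=0$ in $\Omega$ by \eqref{bcond}, there is no boundary contribution and no divergence term. Since $b_0$ is time-independent and axisymmetric, $[\,b_0\!\cdot\!\nabla,\,\bp^4\,]$ is of order $3$ and harmless, while $(b_0\!\cdot\!\nabla)\bp^4\nu=(b_0\!\cdot\!\nabla)\bp^4\dt\zeta=\dt\bp^4(b_0\!\cdot\!\nabla\zeta)+l.o.t.$; the term $\bp^4\zeta\cdot\nabla_{\bar\a}\nu$ in $\mathcal V$ contributes only lower-order terms after integration by parts (controlled by $P(M)\|(\nu,b_0\!\cdot\!\nabla\zeta)\|_4\|b_0\!\cdot\!\nabla\zeta\|_4$). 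This produces $\tfrac12\dtt\|\bp^4(b_0\!\cdot\!\nabla\zeta)\|_0^2$ on the left-hand side, which together with the kinetic term gives precisely the energy we want to bound.

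For the pressure block I use \eqref{divValpha}: integration by parts yields
\begin{equation*}
\int_\Omega \mathcal V\cdot \nabla_{\bar\a}\mathcal Q
\;=\;-\!\int_\Omega \Div_{\bar\a}\mathcal V\;\mathcal Q
\;+\;\underbrace{\int_\Gamma \mathcal V_i\,\bar\a_{i1}\,\mathcal Q}_{=:\mathcal I_b}
\;=\;-\!\int_\Omega g_3\,\mathcal Q+\mathcal I_b,
\end{equation*}
where $g_3$ and $\mathcal Q$ are $L^2$-controlled in terms of $\mathfrak E$ (using $\|\mathcal C_i(\cdot)\|_0\lesssim P(\|(\bar R,\bar Z)\|_4)\|\cdot\|_4$ from \eqref{comest}, the pressure bound \eqref{nueq} for $\|q\|_4$, and the Hardy inequality of Lemma \ref{hardy} for $\bp^4(v^r/\bar R)-\mathcal V_1/\bar R$). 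The main obstacle is $\mathcal I_b$: on $\Gamma$ the trace $\mathcal Q$ involves $\bp^4\zeta\cdot\nabla_{\bar\a} q$ from \eqref{qbdc}, which naively demands $|\bp^4\zeta|_{1/2}$ and loses one derivative. Here I invoke the boundary smoothing effect highlighted in the strategy section: since $b_0^z\neq 0$ on $\Gamma$ by \eqref{stc} and $b_0^r|_\Gamma=0$, I write $\bp\zeta=\frac{1}{b_0^z}(b_0\!\cdot\!\nabla\zeta)$ on $\Gamma$ and obtain
\begin{equation*}
|\bp^4\zeta|_{1/2}\;\lesssim\;\Bigl|\dfrac{\bp^3(b_0^z\bp\zeta)}{b_0^z}\Bigr|_{1/2}
+|l.o.t.|_{1/2}\;\lesssim\;\|b_0\!\cdot\!\nabla\zeta\|_4,
\end{equation*}
using \eqref{co123} and the trace theorem. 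Then $\mathcal I_b$ is estimated by an $(H^{-1/2},H^{1/2})$ duality pairing, where $|\mathcal V\cdot\bar\a N|_{-1/2}$ is reduced to tangential quantities via the divergence equation $\Div_{\bar\a}\mathcal V=g_3$ (a standard trick: the normal component of a vector in $L^2$ with $L^2$ divergence has a well-defined $H^{-1/2}$ trace), so $\mathcal I_b\le C(M)P(\mathfrak E)$.

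Finally, the forcing $\int_\Omega \mathcal V\cdot F$ is handled term-by-term: $\dt(\bp^4\zeta\cdot\nabla_{\bar\a}\nu)\cdot\mathcal V$ is integrated by parts in $t$ producing a boundary-in-time correction absorbed into $M_0+TC(M)P(\sup\mathfrak E)$ by \eqref{apbound}; $\mathcal C_i(q)$, the magnetic commutator $[\bp^4,b_0\!\cdot\!\nabla]b_0\!\cdot\!\nabla\zeta$, and the $\bp^4$ of the centrifugal/twist terms $(\bar v^\theta)^2/\bar R-\bar R(b_0\!\cdot\!\nabla\bar\Theta)^2$ are all controlled by $P(M)\cdot\mathfrak E$ via \eqref{comest}, the commutator estimates \eqref{co0}--\eqref{co2}, the Hardy inequality \eqref{hardy} (to absorb the $1/\bar R$ and $1/r$ singularities), and \eqref{nueq}. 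Collecting everything, I obtain
\begin{equation*}
\dtt\bigl(\|\bp^4\nu\|_0^2+\|\bp^4(b_0\!\cdot\!\nabla\zeta)\|_0^2\bigr)\le C(M)P(\mathfrak E(t)),
\end{equation*}
and integrating in time from $0$ to $t\le T$ yields \eqref{teee}. The only genuinely nontrivial step is the $\mathcal I_b$ estimate, where the non-collinearity condition \eqref{stc} is essential.
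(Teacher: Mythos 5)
Your proposal is correct and follows essentially the same route as the paper: testing \eqref{eqValpha} against $\mathcal V$, extracting $\tfrac12\dtt\|\bp^4(b_0\cdot\nabla\zeta)\|_0^2$ from the magnetic block via \eqref{bcond}, converting the pressure block with \eqref{divValpha} and the $(H^{-1/2},H^{1/2})$ duality, and gaining the crucial half derivative on $|\bp^4\zeta|_{1/2}$ through $\bp\zeta=(b_0\cdot\nabla\zeta)/b_0^z$ on $\Gamma$ under \eqref{stc}. The only deviations (testing against $\bar R\mathcal V/r$ instead of absorbing the Jacobian into a commutator term, and integrating $\dt(\bp^4\zeta\cdot\nabla_{\bar\a}\nu)$ by parts in time rather than bounding it directly) are cosmetic and equally valid.
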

\begin{proof}
Taking the $L^2(\Omega)$ inner product of \eqref{eqValpha} with $\mathcal{V}$ yields
\begin{equation}
	\dfrac{1}{2}\dfrac{d}{dt}\int_{\Omega}|\mathcal{V}|^2\,dx+\int_{\Omega}\pa^\a_{\zeta_i}\mathcal Q \mathcal{V}_i\,dx+\int_{\Omega}\bp^4(b_0\cdot\nabla \zeta_i)b_0\cdot\nabla\mathcal{V}_i\,dx=\int_{\Omega}F\cdot\mathcal{V}\,dx.
	\label{fd}
\end{equation}
Firstly, the right hand side of \eqref{fd} can be bounded by
\begin{equation}
\begin{split}
\quad\int_{\Omega}F\cdot\mathcal{V}\,dx &\leq \bigg(\norm{\dt\left(\bp^4\zeta\cdot\nabla_{\bar\a}\nu\right)}_0 - \norm{\mathcal{C}_i(q)}_0+\norm{\left[\bp^4, b_0\cdot\nabla\right]b_0\cdot\nabla\zeta}_0\\&\quad+\norm{\bp^4\left(\dfrac{(\bar v^{\theta})^2}{\bar R}-\bar R(b_0\cdot\nabla\bar\Theta)^2\bigg)}_0\right)\norm{\mathcal V}_0\\
&\leq C(M)P\left(\sup_{t\in [0,T]}\mathfrak{E}(t)\right).
\end{split}
\end{equation}
Here we use \eqref{comest} and we estimate
\begin{equation}
	\begin{split}
	\norm{\bp^4\left(\dfrac{(\bar v^{\theta})^2}{\bar R}\right)}_0&\leq C\left(\norm{\bar R \bp^4(\dfrac{\bar v^{\theta}}{\bar R})}_0\norm{\dfrac{\bar v^{\theta}}{\bar R}}_{L^{\infty}}+\norm{\bar v^{\theta}}_4\norm{\dfrac{\bar v^{\theta}}{\bar R}}_{L^{\infty}}\right)
	\\&\leq C\left(\norm{\bp^4\bar v^{\theta}-\left[\bp^4, \dfrac{\bar v^{\theta}}{\bar R}\right]\bar R}_0 \norm{\dfrac{\bar v^{\theta}}{\bar R}}_{L^{\infty}}+\norm{\bar v^{\theta}}_4\norm{\dfrac{\bar v^{\theta}}{\bar R}}_{L^{\infty}}\right) \leq C(M)
\end{split}
\end{equation}
by using Hardy's inequality \eqref{hardy} and $\norm{\bp^4(\bar R(b_0\cdot\nabla\bar\Theta)^2)}_0$ can also be bounded by $C(M)$.

Next, with \eqref{deflag}, we have
\begin{equation*}
	\begin{split}
		&\int_{\Omega}\bp^4(b_0\cdot\nabla \zeta_i) b_0\cdot\nabla\mathcal{V}_i\,dx\\=&\int_{\Omega}\bp^4(b_0\cdot\nabla \zeta_i)b_0\cdot\nabla(\bp^4\nu_i)\,dx-\int_{\Omega}\bp^4(b_0\cdot\nabla \zeta_i)b_0\cdot\nabla(\bp^4\zeta\cdot\nabla_{\bar\a}\nu_i)\,dx\\=&\dfrac{1}{2}\dfrac{d}{dt}\int_{\Omega}|\bp^4(b_0\cdot\nabla \zeta)|^2\,dx+\int_{\Omega}\bp^4(b_0\cdot\nabla \zeta_i)\left[b_0\cdot\nabla,\bp^4\right]\nu_i\,dx\\&-\int_{\Omega}\bp^4(b_0\cdot\nabla \zeta_i)b_0\cdot\nabla(\bp^4\zeta\cdot\nabla_{\bar\a}\nu_i)\,dx\\\geq&\dfrac{1}{2}\dfrac{d}{dt}\left(2\pi\int_{\mathbb T}\int_{0}^{R_0}r|\bp^4(b_0\cdot\nabla \zeta)|^2\,drdz\right)-C(M)P\left(\sup_{t\in [0,T]}\mathfrak{E}(t)\right).
	\end{split}
\end{equation*}
By integration-by-parts and \eqref{divValpha}, we have
\begin{equation}
	\begin{split}
		\int_{\Omega}\pa^{\bar\a}_{\zeta_i}\mathcal Q \mathcal{V}_i\,dx=&-\int_{\Omega}\dfrac{1}{\bar R}\mathcal{Q}\left(\pa^{\bar\a}_{\zeta_i}(\bar R\mathcal{V}_i)\right)\,dx-\int_{\Omega}\pa_{a_j}\left(\dfrac{r}{\bar R}\bar \a_{ij}\right)\mathcal Q\dfrac{\bar R}{r}\mathcal{V}_i\,dx+2\pi\int_{\mathbb T}R_0\mathcal Q\left(\mathcal{V}_i\bar\a_{i1}\right)\, dz\\\leq& \norm{\mathcal{Q}}_0\left(\norm{g_3}_0+\norm{\mathcal V}_0\norm{\dfrac{r}{\bar R}\bar\a}_3\right)+C(M)\abs{\mathcal{Q}}_{1/2}\abs{\mathcal{V}}_{-1/2}
		\\\leq& \norm{\mathcal{Q}}_0\left(\norm{g_3}_0+\norm{\mathcal V}_0\norm{\dfrac{r}{\bar R}\bar\a}_3\right)+C(M)\abs{\partial_z^4 \left(\dfrac{\bar C^2(t)}{\bar R^2}\right)-\partial_z^4\bar\zeta_i \pa_{\zeta_i}^{\bar\a}q}_{1/2}\abs{\mathcal{V}}_{-1/2}		
\\\leq  & \norm{\mathcal{Q}}_0\left(\norm{g_3}_0+\norm{\mathcal V}_0\norm{\dfrac{r}{\bar R}\bar\a}_3\right)\\&\quad+C(M)\abs{\partial_z^3 \left(\dfrac{\bar C^2(t)b_0^z\partial_z\bar R}{b_0^z}\dfrac{1}{\bar R^3}\right)-\partial_z^3\left(\dfrac{b_0^z\partial_z \bar \zeta_i}{b_0^z}\right)\pa_{\zeta_i}^{\bar\a}q}_{1/2}\abs{\mathcal{V}}_{-1/2}\\\leq & C(M)P\left(\sup_{t\in [0,T]}\mathfrak{E}(t)\right).
\end{split}
	\label{j}
\end{equation}
Here we used the proposition \ref{pr}, the condition $\abs{b_0^z}\geq \delta$ on $\Gamma$ and the trace theorem
\begin{equation*}
\abs{\partial_z^3(b_0^z\partial_z\bar \zeta)}_{1/2}\leq C\norm{b_0\cdot\nabla\bar \zeta}_4
\end{equation*}
to control the last term on the RHS of \eqref{j}. Moreover, we also estimate $\norm{g_3}_0$ as
\begin{equation*}
\begin{split}
\norm{g_3}_0&\leq \norm{C_i(\nu)}_0+\norm{\bp^4\left(\dfrac{v^r}{\bar R}\right)-\dfrac{\bp^4v^r-\bp^4\bar R\partial_{R}^{\bar\a}v^r-\bp^4\bar Z\partial_{R}^{\bar\a}v^r}{\bar R}}_0\\&\leq C(M)P\left(\sup_{t\in [0,T]}\mathfrak{E}(t)\right)+\norm{-\dfrac{v^r\bp^4\bar R}{\bar R^2}+\dfrac{\bp^4\bar R\partial_{R}^{\bar\a}v^r}{\bar R}+\dfrac{\bp^4\bar Z\partial_{Z}^{\bar\a}v^r}{\bar R}}_0
\\&\leq C(M)P\left(\sup_{t\in [0,T]}\mathfrak{E}(t)\right)+\norm{\bp^4\bar R\partial_R^{\bar\a}\left(\dfrac{v^r}{\bar R}\right)}_0+\norm{\bp^4\bar Z}_0\norm{\dfrac{\partial_Z^{\bar\a}v^r}{\bar R}}_{L^\infty}
\\&\leq C(M)P\left(\sup_{t\in [0,T]}\mathfrak{E}(t)\right)
\end{split}
\end{equation*}
by using Hardy's inequality.

Combining \eqref{fd}-\eqref{j}, we arrive at
\begin{equation*}
\sup_{t\in[0,T]}\left(\norm{\mathcal{V}}_0^2+\norm{\partial_z^4(b_0\cdot\nabla\zeta)}_0^2\right) \leq M_0+C(M)TP\left(\sup_{t\in [0,T]}\mathfrak{E}(t)\right).
\end{equation*}
Then by the definition of $\mathcal{V}$ and the first equation of \eqref{sub21}, we prove the proposition.
\end{proof}
\subsection{Normal estimates for $\nu=(v^r, v^z)$}\label{curle}
In this subsection, we control the normal derivatives of $\nu=(v^r, v^z)$ by using the equaton \eqref{divValpha}, and the curl equation
\begin{equation}
	\begin{split}
	&	\partial_t(\pa^{\bar\a}_Z v^r-\pa^{\bar\a}_R v^z)-b_0\cdot\nabla\left(\pa^{\bar\a}_Z(b_0\cdot\nabla R)-\pa^{\bar\a}_R(b_0\cdot\nabla Z)\right)\\&=\left[\pa^{\bar\a}_Z, b_0\cdot\nabla\right](b_0\cdot\nabla R)-\left[\pa^{\bar\a}_R, b_0\cdot\nabla\right](b_0\cdot\nabla Z)+\left[\partial_t,\pa^{\bar\a}_Z\right] v^r-\left[\partial_t,\pa^{\bar\a}_R\right] v^z
\end{split}
	\label{curlequ}
\end{equation}
We denote $\curl \nu:=\partial_z v^r-\partial_r v^z, \curl b:=\partial_z(b_0\cdot\nabla R)-\partial_r(b_0\cdot\nabla Z)$, $\curl_{\bar\a}\nu:=\partial_Z^{\bar\a}v^r-\partial_R^{\bar\a}v^z$, $\curl_{\bar\a} b:=\partial_Z^{\bar\a}(b_0\cdot\nabla R)-\partial_R^{\bar\a}(b_0\cdot\nabla Z)$ and begin with the energy estimates for \eqref{curlequ}.
\begin{proposition}
	\label{curlprop}
For $t\in [0,T]$ with $T\le T_\kappa$, it holds that
\begin{equation}
\label{curlest}
\norm{\curl v(t)}_{3}^2+\norm{\curl b(t)}_{3}^2\leq M_0+C(M)TP\left(\sup_{t\in[0,T]} \mathfrak{E}(t)\right).
\end{equation}
\end{proposition}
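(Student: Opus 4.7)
My plan is to derive the proposition by an $H^3$ energy estimate on the curl equation \eqref{curlequ}, following the same template as the $L^2$ basic estimate \eqref{00estimate} but carrying three additional spatial derivatives, and then converting bounds on $\curl_{\bar\a}\nu$ and $\curl_{\bar\a} b$ back to bounds on $\curl\nu$ and $\curl b$ via the closeness of $\bar\a$ to the identity from \eqref{inin3}.

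Concretely, for each multi-index $\alpha$ with $|\alpha|\le 3$, I apply $D^\alpha$ to \eqref{curlequ}, test the result against $D^\alpha(\curl_{\bar\a}\nu)$ in $L^2_{r,z}(\Omega)$, and integrate by parts the magnetic term $b_0\cdot\nabla D^\alpha(\curl_{\bar\a} b)$. The boundary contribution vanishes because $b_0^r=0$ on $\Gamma$ by \eqref{bcond}. Next, I exploit the parallel structure coming from $\partial_t(b_0\cdot\nabla\zeta)=b_0\cdot\nabla\nu$ (a consequence of $\partial_t\zeta=\nu$) to rewrite $\int D^\alpha(\curl_{\bar\a} b)\,b_0\cdot\nabla D^\alpha(\curl_{\bar\a}\nu)\,dx$ as $\tfrac12\tfrac{d}{dt}\|D^\alpha\curl_{\bar\a} b\|_0^2$ modulo commutators of $\partial_t$ with $\pa^{\bar\a}_Z,\pa^{\bar\a}_R$ and of $b_0\cdot\nabla$ with $D^\alpha$. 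This yields
\begin{equation*}
\tfrac12\tfrac{d}{dt}\bigl(\|D^\alpha\curl_{\bar\a}\nu\|_0^2+\|D^\alpha\curl_{\bar\a} b\|_0^2\bigr)\le \mathcal R_\alpha,
\end{equation*}
where $\mathcal R_\alpha$ collects the $D^\alpha$-differentiated right-hand side of \eqref{curlequ} together with the commutators produced above. In every such term the most singular factor carries at most three spatial derivatives of $\bar\a$ or $b_0$ (equivalently four derivatives of $\bar\zeta$), multiplied by a factor in $H^4_{r,z}$; by \eqref{co0}-\eqref{co2} and the a priori bound \eqref{apbound},
\begin{equation*}
\|\mathcal R_\alpha\|_0\le C(M)P\bigl(\sup_{[0,T]}\mathfrak E(t)\bigr).
\end{equation*}

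Summing over $|\alpha|\le 3$ and integrating in time, then undoing the $\bar\a$-twist via $\curl\nu-\curl_{\bar\a}\nu=(\delta_{ij}-\bar\a_{ij})\partial_{a_j}\nu_i$ and the analogous identity for $b$, combined with $\|\bar\a-I\|_{L^\infty}\le 1/8$ from \eqref{inin3} and $\|\bar\a\|_4\le C(M)$, converts the $\curl_{\bar\a}$ bound into the desired estimate \eqref{curlest} with only an extra additive contribution of type $TC(M)P(\mathfrak E)$, which is already built into the right-hand side.

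The main technical obstacle is verifying that no commutator forces a fourth derivative of $\bar\a$ or $b_0$ (equivalently a fifth derivative of $\bar\zeta$) to appear. In the worst case a commutator produces a schematic term of the form $D^3\bar\a\cdot D(b_0\cdot\nabla\zeta)$ or $D^3 b_0\cdot D\zeta$; both are safely estimated by $\|\bar\a\|_4\|b_0\cdot\nabla\zeta\|_4\le C(M)P(\mathfrak E)$ thanks to the symmetric commutator estimate \eqref{co2}. The only other subtle point is the replacement of $\curl_{\bar\a}$ by $\curl$: because $\bar\a-I$ is small pointwise, no absorption or smallness of $T$ is needed for this step, and the extra volume term produced by the axisymmetric weight $r$ during integration by parts in $b_0\cdot\nabla$ is controlled by Hardy's inequality \eqref{hardy} applied to $b_0\cdot\nabla\zeta$, which satisfies the required vanishing at $r=0$ coming from \eqref{bcond}.
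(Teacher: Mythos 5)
Your argument is essentially the paper's: an $H^3$ energy estimate on \eqref{curlequ} tested against $D^3(\curl_{\bar\a}\nu)$, with the magnetic term integrated by parts (no boundary term since $b_0^r=0$ on $\Gamma$) to produce $\tfrac{d}{dt}\norm{D^3(\curl_{\bar\a}b)}_0^2$ up to commutators controlled by \eqref{co0}--\eqref{co2}, followed by a passage from $\curl_{\bar\a}$ back to $\curl$. The one place your justification is off is that final passage: the pointwise bound $\abs{\bar\a-I}\le 1/8$ from \eqref{inin3} neither controls the $H^3$ norm of $(\bar\a-I)D\nu$ nor produces the factor $T$ you claim for the extra additive contribution; you must instead write $\bar\a-I=\int_0^t\partial_t\bar\a\,d\tau$ (using $\bar\a(0)=I$), so that the correction is bounded by $TC(M)P\left(\sup_{t\in[0,T]}\mathfrak{E}(t)\right)$ --- this is exactly the fundamental-theorem-of-calculus step the paper uses. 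With that correction the proof closes as you describe.
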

\begin{proof}
Apply $D^3$ to \eqref{curlequ} to get
\begin{equation}\label{curl2}
	\partial_t(D^3(\curl_{\bar\a} v))-b_0\cdot\nabla\left(D^3\left(\curl_{\bar\a} b\right)\right)=F,
\end{equation}
with
\begin{equation*}
	\begin{split}
	F:=&[D^3,b_0\cdot\nabla]\left(\curl_{\bar\a} b\right)\\&+D^3\left([\pa^{\bar\a}_Z, b_0\cdot\nabla](b_0\cdot\nabla R)-[\pa^{\bar\a}_R, b_0\cdot\nabla](b_0\cdot\nabla Z)+[\partial_t,\pa^{\bar\a}_Z] v^r-[\partial_t,\pa^{\bar\a}_R] v^z\right).
\end{split}
\end{equation*}
Taking the $L^2$ inner product of \eqref{curl2} with $D^3(\pa^{\bar\a}_Z v^r-\pa^{\bar\a}_R v^z)$, by the integration by parts, we get
\begin{align}\label{j00}
	&\dfrac{1}{2}\dfrac{d}{dt}\int_{\Omega}\abs{D^3(\curl_{\bar\a} v)}^2\,dx+\underbrace{\int_{\Omega}D^3(\curl_{\bar\a}b) D^3(\pa^{\bar\a}_Z (b_0\cdot\nabla v^r)-\pa^{\bar\a}_R(b_0\cdot\nabla v^z))}_{\mathcal{J}_1}\\&\quad=\underbrace{\int_\Omega F D^3(\pa^{\bar\a}_Z v^r-\pa^{\bar\a}_R v^z)}_{\j_2}+\underbrace{\int_{\Omega}D^{3}(\curl_{\bar\a}b)(\left[D^3\pa^{\bar\a}_Z, b_0\cdot\nabla\right] v^r-\left[D^3\pa^{\bar\a}_Z, b_0\cdot\nabla\right]v^z)}_{\j_3}.\nonumber
\end{align}
Recalling \eqref{deflag}, we have
\begin{equation}
\label{j0}
\begin{split}
	\j_1=&\dfrac{1}{2}\dfrac{d}{dt}\int_{\Omega}\abs{D^3\left(\curl_{\bar\a}b\right)}^2\,dx
	-\underbrace{\dfrac{1}{2}\int_{\Omega}D^3\left(\curl_{\bar\a}b\right)D^3\left(\partial_t\bar\a D(b_0\cdot\nabla \zeta)\right)}_{\j_{1a}}.
\end{split}
\end{equation}
By the identity \eqref{partialF}, we have
\begin{equation*}
\j_{1a}
\ls  \norm{\bar\a}_3\norm{D(b_0\cdot\nabla \zeta)}_3\norm{\partial_t\bar\a}_3\norm{D(b_0\cdot\nabla \zeta)}_3
\le C(M)P\left(\sup_{t\in[0,T]} \mathfrak{E}(t)\right).
\end{equation*}
Hence, we obtain
\begin{equation}
\label{j1}
\j_1\ge \dfrac{1}{2}\dfrac{d}{dt}\int_{\Omega}\abs{D^3\left(\curl_{\bar\a}b\right)}^2
-C(M)P\left(\sup_{t\in[0,T]} \mathfrak{E}(t)\right).
\end{equation}
We now turn to estimate the right hand side of \eqref{j00}. By the identity \eqref{partialF}, we may have
\begin{equation}
\label{j2}
\begin{split}
	\j_2\leq  \norm{F}_0 \norm{\curl_{\bar\a}v}_3
	\leq C(M)P\left(\sup_{t\in[0,T]} \mathfrak{E}(t)\right).
\end{split}
\end{equation}
Similarly,
\begin{equation}
	\label{j3}
	\begin{split}
	\j_3&\ls \norm{D^{3}(\curl_{\bar\a}b))}_0\norm{\left[D^3\pa^{\bar\a}_Z, b_0\cdot\nabla\right] v^r-\left[D^3\pa^{\bar\a}_Z, b_0\cdot\nabla\right]v^z}_0\\&
\leq C(M)P\left(\sup_{t\in[0,T]} \mathfrak{E}(t)\right).
\end{split}
\end{equation}
Consequently, plugging the estimates \eqref{j1}--\eqref{j3} into \eqref{j00}, we obtain
\begin{equation} \label{kllj}
	\dfrac{d}{dt}\int_{\Omega}\abs{D^3(\curl_{\bar\a}v)}^2+\abs{D^3\left(\curl_{\bar\a}b\right)}^2\,dx
		\le C(M)P\left(\sup_{t\in[0,T]} \mathfrak{E}(t)\right).
\end{equation}
Integrating \eqref{kllj} directly in time, and applying the fundamental theorem of calculous,
\begin{equation}
	\norm{\curl  f(t)}_3\le \norm{\curl_{\bar\a}f(t)}_3+\norm{\int_0^t\partial_t\bar\a d\tau D f(t)}_3,
\end{equation}
we then conclude the proposition.
\end{proof}
We now derive the divergence estimates. We denote $\Div v:=\partial_rv^r+\dfrac{1}{r}v^r+\partial_zv^z, \Div b:=\partial_r(b_0\cdot\nabla R)+\dfrac{1}{r}(b_0\cdot\nabla R)+\partial_z(b_0\cdot\nabla Z)$.
\begin{proposition}
	\label{divprop}
For $t\in [0,T]$ with $T$, it holds that
\begin{equation}
\label{divest}
\norm{\Div \nu(t)}_3^2+\norm{\Div b(t)}_3^2 \leq C(M)TP\left(\sup_{t\in[0,T]} \mathfrak{E}(t)\right).
\end{equation}
\end{proposition}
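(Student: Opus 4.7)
My plan is to prove both divergence estimates by exploiting a single underlying principle: $\Div \nu$ and $\Div b$ vanish identically at $t=0$ (the former by the compatibility assumption $\partial_r v_0^r + v_0^r/r + \partial_z v_0^z = 0$, the latter by \eqref{bcond}), and every geometric quantity in \eqref{sub21} departs from its $t=0$ value by an $O(T)$ amount in view of \eqref{apbound}. The idea is to rewrite each divergence so that the $T$-smallness is algebraically manifest via integration in time, and then close with the product and commutator estimates already available.

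For $\Div\nu$, I use the Eulerian constraint $\Div_{\bar\a}\nu=0$ from \eqref{sub21} in the form $\bar\a_{ij}\partial_{a_j}\nu_i = -v^r/\bar R$, and subtract it from $\Div\nu = \delta_{ij}\partial_{a_j}\nu_i + v^r/r$ to obtain the identity
\begin{equation*}
\Div \nu = (\delta_{ij} - \bar\a_{ij})\partial_{a_j}\nu_i + v^r\,\frac{\bar R - r}{r\bar R}.
\end{equation*}
Both factors measuring the deviation from the identity are $O(T)$ in $L^\infty_t H^3$: $\bar\a - I = \int_0^t \partial_t \bar\a\,d\tau$ directly from \eqref{apbound}, and $(\bar R - r)/r = \int_0^t (\bar v^r/r)\,d\tau$ after applying Hardy's Lemma \ref{hardy} thanks to $\bar v^r|_{r=0}=0$. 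The factors $1/\bar R$ and $v^r/r$ are absorbed by \eqref{inin3} and Hardy (using $v^r|_{r=0}=0$), respectively. The product estimate \eqref{co0} then delivers $\norm{\Div\nu}_3^2 \le T^2 C(M)\mathfrak{E} \le TC(M)P(\sup_{[0,T]}\mathfrak{E})$.

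For $\Div b$, expanding $b = (b_0\cdot\nabla R,\, b_0\cdot\nabla Z)$ (the $\theta$-component annihilating axisymmetric quantities) and using \eqref{bcond} in the form $\partial_r b_0^r + b_0^r/r + \partial_z b_0^z = 0$, a direct computation gives the identity
\begin{equation*}
\Div b = \partial_z b_0^z(\partial_z Z - \partial_r R) + \bigl(\partial_r b_0^z + \tfrac{b_0^z}{r}\bigr)\partial_z R + (\partial_z b_0^r)\partial_r Z + b_0\cdot\nabla(\partial_r R + \partial_z Z - 2),
\end{equation*}
where the constant $-2$ is legitimate because $b_0\cdot\nabla 2 = 0$. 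The first three summands factor through one of $\partial_z Z - \partial_r R$, $\partial_z R$, $\partial_r Z$, each written as $\int_0^t(\cdots)\,d\tau$ with the integrand bounded in $H^3$ by $\norm{\nu}_4\le\sqrt{\mathfrak{E}}$, contributing at most $TC(M)\sqrt{\mathfrak{E}}$. For the fourth summand I apply the Leibniz-type identity
\begin{equation*}
b_0\cdot\nabla f = \partial_r(b_0^r f) + \partial_z(b_0^z f) + (b_0^r/r)f,
\end{equation*}
which is an algebraic consequence of \eqref{bcond}, with $f = \partial_r R + \partial_z Z - 2 = \int_0^t(\partial_r v^r + \partial_z v^z)\,d\tau$. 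This converts the critical piece into divergence-form expressions in which the top-order spatial derivatives land on $b = b_0\cdot\nabla\zeta \in H^4$ (controlled by $\mathfrak{E}$) while the remaining factor retains the $O(T)$ smallness of $f$.

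The main technical obstacle is precisely the fourth summand $b_0\cdot\nabla(\partial_r R + \partial_z Z - 2)$: naively, $b_0\cdot\nabla$ would demand a fifth-order derivative of $\zeta$ which $\mathfrak{E}$ does not control. The rearrangement above circumvents this loss by distributing one spatial derivative onto $b_0^r$, $b_0^z$ (both in $H^4\subset\mathfrak E$-style bounds through \eqref{apbound}) and onto $b$ itself, whose full $H^4$-bound is built into $\mathfrak{E}$. Hardy's inequality is invoked throughout near the axis to tame the $1/r$ singularities, relying on the axisymmetric vanishings $b_0^r|_{r=0}=v^r|_{r=0}=0$ and $\bar R|_{r=0}=0$; the commutator estimates \eqref{co1}--\eqref{co2} handle the lower-order pieces. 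Combining everything yields $\norm{\Div b}_3^2 \le TC(M)P(\sup_{[0,T]}\mathfrak{E})$, completing the proposition.
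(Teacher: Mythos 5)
Your treatment of $\Div\nu$ is correct and is essentially the paper's own argument: subtracting the exact constraint $\Div_{\bar\a}\nu=0$ from $\Div\nu$ and writing $\delta_{ij}-\bar\a_{ij}$ and $1/r-1/\bar R$ as time integrals yields the factor $T$ with no loss of derivatives.

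The $\Div b$ part, however, has a genuine gap at the summand $b_0\cdot\nabla(\partial_rR+\partial_zZ-2)$ (and already at the piece $\tfrac{b_0^z}{r}\partial_zR$). To extract the factor $T$ you must write $f:=\partial_rR+\partial_zZ-2=\int_0^t(\partial_rv^r+\partial_zv^z)\,d\tau$, so that $b_0\cdot\nabla f=\int_0^tb_0\cdot\nabla(\partial_rv^r+\partial_zv^z)\,d\tau$; bounding this in $H^3$ requires $\norm{b_0\cdot\nabla D\nu}_3$, i.e.\ either $\norm{\nu}_5$ or $\norm{b_0\cdot\nabla\nu}_4$, neither of which is controlled by $\mathfrak{E}$ or by \eqref{apbound}. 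The proposed rewriting $b_0\cdot\nabla f=\partial_r(b_0^rf)+\partial_z(b_0^zf)+\tfrac{b_0^r}{r}f$ does not repair this: $\norm{\partial_r(b_0^rf)}_3$ requires $\norm{b_0^rf}_4$, and since $b_0^rf$ is a product of $b_0^r$ with \emph{first} derivatives of $\zeta$ (it is not a component of $b_0\cdot\nabla\zeta$), four derivatives of it again cost five derivatives of $\zeta$, equivalently five of $\nu$ after the time integration. The alternative of commuting, $b_0\cdot\nabla\partial_rR=\partial_r(b_0\cdot\nabla R)-(\partial_rb_0)\cdot\nabla R$, does keep the derivative count at four but destroys the $O(T)$ smallness, since $\partial_r(b_0\cdot\nabla R)+\partial_z(b_0\cdot\nabla Z)$ does not vanish at $t=0$. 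The same tension appears in $\tfrac{b_0^z}{r}\partial_zR$: Hardy's inequality applied to $b_0^z\partial_zR$ (or to $\int_0^tb_0^z\partial_zv^r\,d\tau$) costs $\norm{R}_5$ (resp.\ $\norm{v^r}_5$). The paper avoids all of this by never estimating $\Div b$ through a pointwise algebraic identity: it applies $b_0\cdot\nabla$ to the identity $\Div_{\bar\a}\nu=0$, so that only the commutator $[\Div_{\bar\a},b_0\cdot\nabla]\nu$ (first order in $\nu$ and in $b_0$) survives, and thereby obtains the transport equation \eqref{divb} for $\Div_{\bar\a}(b_0\cdot\nabla\zeta)$ whose right-hand side is controlled by $C(M)P(\mathfrak{E})$ in $H^3$ and whose initial datum vanishes by \eqref{bcond}; the factor $T$ then comes from integrating this equation in time. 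You would need this (or an equivalent) cancellation to close the second estimate.
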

\begin{proof}
	From the third equation of \eqref{sub21} and $\bar\a|_{t=0}=I$, we see that
\begin{equation}
	\label{fpe}
	\partial_r v^r+\dfrac{1}{r}v^r+\partial_z v^z=-\int_0^t\partial_t\bar\a_{ij}\,d\tau\partial_{a_j}\nu_i
	+\int_0^t\dfrac{r\partial_t\bar R}{\bar R^2}\,d\tau\dfrac{v^r}{r}.
\end{equation}
Hence, it is clear that by the identity \eqref{partialF},
\begin{equation}\label{divv1}
\norm{\partial_r v^r+\dfrac{1}{r}v^r+\partial_z v^z}_{3}^2\le C(M)TP\left(\sup_{t\in[0,T]} \mathfrak{E}(t)\right).
\end{equation}
From the third equation of \eqref{sub21} again, we have
\begin{equation*}
	\Div_{\bar\a}b_0\cdot\nabla \nu=\left[\Div_{\bar\a},b_0\cdot\nabla\right]\nu.
\end{equation*}
This together with the equation $\nu=\partial_t \zeta$, we have
\begin{equation}
\label{divb}
\begin{split}
	\partial_t\left(\Div_{\bar\a}(b_0\cdot\nabla \zeta)\right)=&\left[\Div_{\bar\a},b_0\cdot\nabla\right]\nu+ \partial_t \bar\a_{i\ell} \partial_{a_\ell}\left(b_0\cdot\nabla \zeta_i\right)+\partial_t\dfrac{1}{\bar R}b_0\cdot\nabla R.
\end{split}
\end{equation}
This implies that, by doing the $D^3$ energy estimate and using the identity \eqref{partialF},
\begin{equation}\label{di2}
\norm{\Div_{\bar\a}(b_0\cdot\nabla \zeta)}_{3}^2
\leq  C(M)TP\left(\sup_{t\in[0,T]} \mathfrak{E}(t)\right).
\end{equation}
And then applying the fundamental theorem of calculous, and a similar equality as \eqref{fpe}, we arrive at
\begin{equation}\label{divv12}
\norm{\Div b}_{3}^2
\leq  C(M)TP\left(\sup_{t\in[0,T]} \mathfrak{E}(t)\right).
\end{equation}

Consequently, we conclude the proposition by the estimates \eqref{divv1} and \eqref{divv12}.
\end{proof}
Now we show how to get normal derivatives of $\nu=(v^r, v^z)$ and $b_0\cdot\nabla\zeta$ by using Proposition \ref{curlprop} and Proposition \ref{divprop}:
\begin{proposition}\label{nore}
For $t\in [0,T]$, it holds that
\begin{equation}
\label{teedde}
\norm{D^4 \nu (t)}_0^2+\norm{D^4(b_0\cdot\nabla \zeta)(t)}_0^2\leq M_0+C(M)TP\left(\sup_{t\in[0,T]}\mathfrak{E}(t)\right).
\end{equation}
\end{proposition}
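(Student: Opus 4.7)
The strategy is a standard div--curl decomposition: combine the pure tangential estimate of Proposition \ref{te} with the curl estimate of Proposition \ref{curlprop} and the divergence estimate of Proposition \ref{divprop}, and recover every mixed derivative $\pa_r^k\pa_z^{4-k}$ by turning $\pa_r$ into a tangential derivative plus curl/div plus Hardy terms. Since the argument is identical for $\nu$ and for $b_0\cdot\nabla\zeta$, I describe it only for $\nu$; the magnetic version uses $\curl b$ and $\Div b$ in place of $\curl\nu$ and $\Div\nu$, together with the boundary condition $b_0^r|_{\Gamma}=0$ and the axis condition $(b_0\cdot\nabla R)|_{r=0}=0$ inherited from \eqref{bcond} and $R(0,z,t)=0$.

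\emph{Step 1 (Pointwise algebraic identities).} From the definitions of $\curl\nu$ and $\Div\nu$ one has
\begin{equation*}
\pa_r v^z=\pa_z v^r-\curl\nu,\qquad \tfrac{1}{r}\pa_r(r v^r)=\Div\nu-\pa_z v^z,
\end{equation*}
so that each application of $\pa_r$ to a component of $\nu$ is expressible as a tangential $\pa_z$-derivative on the other component plus a divergence or curl. Using the second identity in the form $\pa_r(rv^r)=r(\Div\nu-\pa_z v^z)$ avoids having to invoke a top-order Hardy bound on $v^r/r$.

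\emph{Step 2 (Induction on the number of normal derivatives).} Set $\phi_k:=\|\pa_r^k\pa_z^{4-k}\nu\|_0^2$ for $k=0,\dots,4$. The base case $\phi_0$ is exactly Proposition \ref{te}. For $k\ge 1$, apply $\pa_r^{k-1}\pa_z^{4-k}$ to the identities in Step 1. This expresses $\pa_r^k\pa_z^{4-k}v^z$ in terms of $\pa_r^{k-1}\pa_z^{5-k}v^r$ (which has at most $k-1$ normal derivatives and hence is bounded by $\phi_{k-1}$) and $\pa_r^{k-1}\pa_z^{4-k}\curl\nu$ (three derivatives on $\curl\nu$, controlled by Proposition \ref{curlprop}). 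Likewise, $\pa_r^k\pa_z^{4-k}(rv^r)$ is bounded by $\|\Div\nu\|_3+\phi_{k-1}$ up to commutator terms whose coefficients are smooth in $r$. From $\pa_r^k\pa_z^{4-k}(rv^r)$ one recovers $\pa_r^k\pa_z^{4-k}v^r$ by the Leibniz rule and Lemma \ref{hardy}, using that $v^r(0,z,t)=0$ (a consequence of $R(0,z,t)\equiv 0$, which follows by integrating $\pa_t R=v^r$ together with the initial data $R(0,z,0)=0$ and the regularity of $R$). In this way $\phi_k$ is controlled by $\phi_{k-1}$, $\|\curl\nu\|_3$, $\|\Div\nu\|_3$, and lower-order terms.

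\emph{Step 3 (Summation).} Summing the inductive bound from $k=0$ to $k=4$ and invoking the three previous propositions yields
\begin{equation*}
\|D^4\nu\|_0^2\le \sum_{k=0}^{4}\phi_k \le M_0+C(M)TP\!\left(\sup_{t\in[0,T]}\mathfrak{E}(t)\right),
\end{equation*}
and an identical induction on $\phi_k^b:=\|\pa_r^k\pa_z^{4-k}(b_0\cdot\nabla\zeta)\|_0^2$, starting from the tangential bound in Proposition \ref{te} and using Propositions \ref{curlprop}--\ref{divprop}, gives the analogous estimate for $b_0\cdot\nabla\zeta$. Combining the two gives \eqref{teedde}.

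\emph{Main obstacle.} The delicate point is the Hardy term at top order: if one naively substitutes $\pa_r v^r=\Div\nu-\pa_z v^z-v^r/r$ and estimates $\|v^r/r\|_3$ by $\|v^r\|_4$, the bound is circular. The plan avoids this by working with the combination $\frac{1}{r}\pa_r(rv^r)$ coming directly out of $\Div\nu$, so that the borderline factor $1/r$ is paired with $\pa_r(rv^r)$, which vanishes on the axis, before Lemma \ref{hardy} is applied. The same remark applies to the pair $b_0\cdot\nabla R$, $(b_0\cdot\nabla R)/r$ on the magnetic side.
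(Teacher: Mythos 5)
Your overall architecture (the tangential bound of Proposition \ref{te} as the base case, then an induction trading one $\pa_z$ for one $\pa_r$ via the curl and divergence bounds of Propositions \ref{curlprop} and \ref{divprop}) is exactly the paper's, and the $v^z$ branch of your induction is correct. The gap is in the $v^r$ branch, precisely at the ``main obstacle'' you name. What the divergence estimate gives at the first stage is control of $\pa_z^3\bigl(\pa_r v^r+\tfrac{v^r}{r}\bigr)$ in $L^2(r\,dr\,dz)$; the task is to extract $\pa_z^3\pa_r v^r$ \emph{and} $\pa_z^3(v^r/r)$ separately (the latter is also needed at the next step of the induction). Your proposed mechanism does not achieve this. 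If you Leibniz-expand $\pa_r^{k}\pa_z^{4-k}(rv^r)$, you only control $r\,\pa_r^{k}\pa_z^{4-k}v^r$ in the weighted $L^2$, and the extra factor of $r$ cannot be removed near the axis: $\int |f|^2 r^3\,dr<\infty$ does not imply $\int |f|^2 r\,dr<\infty$. If instead you invoke Lemma \ref{hardy} on $g=\pa_r(rv^r)$, that lemma costs one derivative ($\norm{g/r}_{s-1}\ls\norm{g}_s$), so controlling $\norm{\tfrac1r\pa_r(rv^r)}_3$ would require $\norm{\pa_r(rv^r)}_4\sim\norm{r\pa_r v^r}_4$, i.e.\ fifth derivatives of $v^r$ --- exactly the circularity you set out to avoid. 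So ``pairing $1/r$ with $\pa_r(rv^r)$'' does not by itself resolve the problem.

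The paper closes this step by a different device: it expands the square, as in \eqref{sln},
\begin{equation*}
\norm{\pa_z^3\pa_r v^r+\pa_z^3(\tfrac{v^r}{r})}_0^2=\norm{\pa_z^3\pa_r v^r}_0^2+\norm{\pa_z^3(\tfrac{v^r}{r})}_0^2+2\int_{\Omega}\pa_z^3\pa_r\bigl(r\tfrac{v^r}{r}\bigr)\,\pa_z^3(\tfrac{v^r}{r})\,dx,
\end{equation*}
and shows by an integration by parts in $r$ against the measure $r\,dr\,dz$ that the cross term is \emph{nonnegative} (an interior square plus a boundary contribution on $r=R_0$, see \eqref{sfo}). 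Hence both $\norm{\pa_z^3\pa_r v^r}_0^2$ and $\norm{\pa_z^3(v^r/r)}_0^2$ are individually dominated by the divergence combination, with no Hardy inequality and no derivative loss; the same sign argument is repeated at each subsequent step (\eqref{sln2}--\eqref{sfo2}) and for $b_0\cdot\nabla R$. To make your plan work you must either supply this positivity argument or replace Lemma \ref{hardy} by a sharp same-order one-dimensional weighted Hardy inequality of the form $\int_0^{R_0} r^{-3}\bigl(\int_0^r sA(s)\,ds\bigr)^2dr\ls\int_0^{R_0}A(s)^2 s\,ds$, which is not what Lemma \ref{hardy} provides. (Your side remark deducing $v^r(0,z,t)=0$ from $R(0,z,t)=0$ is also backwards as stated, but that is minor.)
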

\begin{proof}
First, by using the Proposition \ref{curlprop} and the Proposition \ref{te}, we have
\begin{equation}
\label{fefe}
\begin{split}
&\norm{\partial_z^3\partial_rv^z}_0^2\leq M_0+C(M)TP\left(\sup_{t\in[0,T]} \mathfrak{E}(t)\right),\\
&\norm{\partial_z^3(\partial_rv^r+\dfrac{1}{r}v^r)}_0^2\leq M_0+C(M)TP\left(\sup_{t\in[0,T]} \mathfrak{E}(t)\right).
\end{split}
\end{equation}
By direct calculation, we have
\begin{equation}
	\norm{\partial_z^3\partial_r v^r+\partial_z^3(\dfrac{v^r}{r})}_0^2=\norm{\partial_z^3\partial_r v^r}_0^2+\norm{\partial_z^3(\dfrac{v^r}{r})}_0^2+2\int_{\Omega}\partial_z^3\partial_r(r\dfrac{v^r}{r})\partial_z^3(\dfrac{v^r}{r})\,dx
	\label{sln}
\end{equation}
and by integration-by-parts, the last term of the above equality can be calculated as
\begin{equation}	2\int_{\Omega}\partial_z^3\partial_r(r\dfrac{v^r}{r})\partial_z^3(\dfrac{v^r}{r})\,dx=\int_{\Omega}|\partial_z^3(\dfrac{v^r}{r})|^2\,dx+\int_{\mathbb T}\abs{\partial_z^3(\dfrac{v^r}{r})}^2(R_0,z)R_0^2\,dz
	\label{sfo}
\end{equation}
Thus, we arrive at
\begin{equation}
	\norm{\partial_z^3\partial_rv^r}_0^2+\norm{\partial_z^3(\dfrac{v^r}{r})}_0^2\leq M_0+C(M)TP\left(\sup_{t\in[0,T]} \mathfrak{E}(t)\right).
	\label{divesty}
\end{equation}
Next,  we use the Proposition \ref{curlprop} and \eqref{divesty} to obtain
\begin{equation}
\norm{\partial_z^2\partial_r^2v^z}_0^2\leq M_0+C(M)TP\left(\sup_{t\in[0,T]} \mathfrak{E}(t)\right),
\end{equation}
and use the Proposition \ref{divprop} and \eqref{fefe} to obtain
\begin{equation*}
\norm{\partial_z^2\partial_r(\partial_r v^r+\dfrac{1}{r}v^r)}_0^2\leq M_0+C(M)TP\left(\sup_{t\in[0,T]} \mathfrak{E}(t)\right),
\end{equation*}
By direct calculation, we have
\begin{equation}
	\norm{\partial_z^2\partial_r^2 v^r+\partial_z^2\partial_r(\dfrac{v^r}{r})}_0^2=\norm{\partial_z^2\partial_r^2 v^r}_0^2+\norm{\partial_z^2\partial_r(\dfrac{v^r}{r})}_0^2+2\int_{\Omega}\partial_z^2\partial_r^2(r\dfrac{v^r}{r})\partial_z^2\partial_r(\dfrac{v^r}{r})\,dx
	\label{sln2}
\end{equation}
and by integration-by-parts, the last term of the above equality can be calculated as
\begin{equation}	2\int_{\Omega}\partial_z^2\partial_r^2(r\dfrac{v^r}{r})\partial_z^2\partial_r(\dfrac{v^r}{r})\,dx=3\int_{\Omega}|\partial_z^2\partial_r(\dfrac{v^r}{r})|^2\,dx+\int_{\mathbb T}\abs{\partial_z^2\partial_r(\dfrac{v^r}{r})}^2(R_0,z)R_0^2\,dz.
	\label{sfo2}
\end{equation}
Thus, we arrive at
\begin{equation}
	\norm{\partial_z^2\partial_r^2v^r}_0^2+\norm{\partial_z^2\partial_r(\dfrac{v^r}{r})}_0^2\leq M_0+C(M)TP\left(\sup_{t\in[0,T]} \mathfrak{E}(t)\right).
	\label{divesty2}
\end{equation}
Last, we can repeat the above process inductively to bound the $\norm{\partial_z\partial_r^3(v^r,v^z)}_0^2+\norm{\partial_r^4(v^r,v^z)}_0^2$ by
$M_0+C(M)TP\left(\sup\limits_{t\in[0,T]} \mathfrak{E}(t)\right)$.

The estimates for $b_0\cdot\nabla R, b_0\cdot\nabla Z$ can be obtained by a similar way and the proposition is proved.
\end{proof}
Combining the Proposition \ref{basic} and the Proposition \ref{nore}, we arrive at the Proposition \ref{th412}.
\subsection{Solvability of the system \eqref{sub21}}\label{solv21}
With the above a priori estimates, we can solve the system \eqref{sub21} by applying the artificial viscosity approach used in \cite[Section 5.1]{GW_16}. Here we just give a sketch and the reader can refer to \cite{GW_16} for the details.

Firstly, we construct the linear $\varepsilon$-approximate problem by adding artificial viscosity:
 \begin{equation}
 \label{epsapproximate}
 	\begin{cases}
		\partial_t \zeta= \nu+\epsilon (b_0\cdot\nabla)^2 \zeta\,\,&\text{in}\,\,\Omega,\\
\partial_t \nu+\nabla_{\bar\a} q-(b_0\cdot\nabla)^2\zeta=
\left(\dfrac{(\bar v^{\theta})^2}{\bar R}-\bar R(b_0\cdot\nabla \bar\Theta)^2,0\right)\,\,&\text{in}\,\,\Omega,\\
\Div_{\bar\a}\nu=0\,\,&\text{in}\,\,\Omega,\\
q=\dfrac{1}{2}\dfrac{\bar C^2(t)}{\bar R^2}\,\,&\text{on}\,\,\Gamma,\\
(\zeta, \nu)|_{t=0}=(r, z, v_0^r, v_0^z).\,\,&\text{in}\,\,\Omega.
\end{cases}
 \end{equation}

Secondly,  we solve this artificial viscosity system \eqref{epsapproximate} by using a fixed point argument which is based on the solvability of three linear problems, i.e., \eqref{eqforeta}, \eqref{eqforv} and \eqref{eqforq}.
\subsubsection{Three linear problems}
The first linear problem is the following linear degenerate parabolic problem of $\zeta$ with given $\mathfrak{f}^1$:
\begin{equation}
\label{eqforeta}
\begin{cases}
\partial_t\zeta-\epsilon(b_0\cdot\nabla)^2\zeta=\mathfrak{f}^1 \,\, &\text{in } \Omega,\\
\zeta|_{t=0}=(r,z).
\end{cases}
\end{equation}
\begin{proposition}\label{f1}
Given $\mathfrak{f}^1 \in L^{2}(0,T; H_{r,z}^4(\Omega))$, then the problem \eqref{eqforeta}
admits a unique solution $(R, Z)$ that achieves
the initial data and satisfies
\begin{equation}\label{b1}
\begin{split}
\norm{  \zeta(t)}_4+ \varepsilon\norm{ b_0\cdot\nabla\zeta(t) }_4^2 + &\varepsilon^2\int_0^t\norm{ ( b_0\cdot\nabla)^2\zeta }_4^2d\tau
\\&\ls  \norm{(r, z)}_4+ \varepsilon\norm{ b_0\cdot\nabla(r, z) }_4^2  +\int_0^t\norm{\mathfrak{f}^1}_4^2d\tau.
\end{split}
\end{equation}
\end{proposition}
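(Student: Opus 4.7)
The plan is to prove Proposition \ref{f1} in two stages: first establish the a priori estimate \eqref{b1} for sufficiently smooth solutions, then construct solutions by a Galerkin approximation whose uniform bounds come from the same estimate. The crucial structural observation is that, by \eqref{bcond}, the first-order operator $b_0\cdot\nabla$ is both \emph{tangential} to $\Gamma$ (because $b_0^r=0$ there) and \emph{divergence-free with respect to the weighted measure} $r\,dr\,dz$ (because $\partial_r(rb_0^r)+r\partial_z b_0^z=0$). Consequently $-(b_0\cdot\nabla)^2$ acts as a symmetric nonnegative operator on $L^2(\Omega)$ requiring no boundary conditions, and integration against it by parts produces no boundary contribution at $r=R_0$.

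For the basic dissipation, I would apply $D^\alpha$ with $|\alpha|=k\leq 4$ to \eqref{eqforeta} to obtain
\begin{equation*}
\partial_t D^\alpha \zeta-\epsilon(b_0\cdot\nabla)^2 D^\alpha \zeta =D^\alpha \mathfrak{f}^1+\epsilon\bigl[D^\alpha,(b_0\cdot\nabla)^2\bigr]\zeta,
\end{equation*}
and test against $D^\alpha\zeta$. The diffusion term yields $\epsilon\norm{b_0\cdot\nabla D^\alpha\zeta}_0^2$ via the integration by parts above, while the forcing is handled by Cauchy--Schwarz. The commutator I split as $[D^\alpha,(b_0\cdot\nabla)^2]\zeta=(b_0\cdot\nabla)[D^\alpha,b_0\cdot\nabla]\zeta+[D^\alpha,b_0\cdot\nabla](b_0\cdot\nabla\zeta)$; in the first piece I integrate by parts again so the outer $b_0\cdot\nabla$ lands on $D^\alpha\zeta$, bound the resulting product by $\norm{b_0}_4\norm{\zeta}_4\cdot\norm{b_0\cdot\nabla D^\alpha\zeta}_0$ via \eqref{co0}--\eqref{co1}, and absorb a small fraction of the dissipation with Young's inequality. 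The second piece is a bilinear product bounded directly by \eqref{co1}.

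To produce the $\epsilon^2$ term on the left-hand side of \eqref{b1}, I test the $D^\alpha$-differentiated equation against $-\epsilon(b_0\cdot\nabla)^2 D^\alpha\zeta$. Integration by parts turns the time-derivative contribution into $\tfrac{\epsilon}{2}\tfrac{d}{dt}\norm{b_0\cdot\nabla D^\alpha\zeta}_0^2$, and the diffusion contribution into $\epsilon^2\norm{(b_0\cdot\nabla)^2 D^\alpha\zeta}_0^2$; the forcing and commutator terms are absorbed by Young's inequality at the price of constants depending on $\norm{b_0}_4$. Combining the two identities, summing over $|\alpha|\leq 4$, and applying Gr\"onwall gives \eqref{b1}. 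Existence is then obtained by a standard Galerkin scheme on a Hilbert basis of $H^4_{r,z}(\Omega)$: the Galerkin ODE system is linear and globally solvable, the above a priori estimates apply uniformly to the finite-dimensional approximants, and weak-$\ast$ limits furnish a solution at the stated regularity which attains the initial data; uniqueness follows from the $L^2$ identity applied to the difference of two solutions.

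The main delicate point of the whole argument is the top-order commutator $[D^\alpha,(b_0\cdot\nabla)^2]\zeta$ with $|\alpha|=4$: naively it involves a fifth derivative of $\zeta$, which \eqref{b1} does not directly control. The splitting and extra integration by parts described above repackage the excess derivative into the dissipation $\epsilon\norm{b_0\cdot\nabla D^\alpha\zeta}_0^2$, and this is clean only because the boundary contribution produced by the integration by parts vanishes, i.e., precisely because of the tangentiality coming from \eqref{bcond}. Beyond this, the only bookkeeping is ensuring the weighted measure $r\,dr\,dz$ is carried consistently through the integrations by parts—which is automatic since $b_0\cdot\nabla$ preserves it—and using Lemma \ref{hardy} to deal with any $1/r$ factors that appear from differentiation in $r$.
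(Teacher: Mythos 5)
The paper never actually proves Proposition \ref{f1}: it is stated without proof and the solvability of the three linear problems is deferred wholesale to \cite[Section 5.1]{GW_16}. Your argument is the standard one for this degenerate parabolic regularization and is essentially correct: the key structural facts — that \eqref{bcond} makes $b_0\cdot\nabla$ skew-symmetric on $L^2(r\,dr\,dz)$ with no boundary contribution at $r=R_0$, so that $-(b_0\cdot\nabla)^2$ is symmetric nonnegative and needs no boundary condition — are exactly right, and the splitting $[D^{\alpha},(b_0\cdot\nabla)^2]\zeta=(b_0\cdot\nabla)[D^{\alpha},b_0\cdot\nabla]\zeta+[D^{\alpha},b_0\cdot\nabla](b_0\cdot\nabla\zeta)$ followed by one more integration by parts is the correct way to avoid the fifth derivative. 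Two points deserve tightening. First, in the second energy identity (testing against $-\epsilon(b_0\cdot\nabla)^2D^{\alpha}\zeta$) the same commutator reappears and its first piece still contains $b_0\cdot\nabla D^{\alpha}\zeta$ with $|\alpha|=4$; there is no dissipation of the form $\epsilon\norm{b_0\cdot\nabla D^{\alpha}\zeta}_0^2$ available in that identity to absorb it, so you must rewrite $b_0\cdot\nabla D^{\alpha}\zeta=D^{\alpha}(b_0\cdot\nabla\zeta)-[D^{\alpha},b_0\cdot\nabla]\zeta$ and close the resulting $\epsilon^2\norm{b_0\cdot\nabla\zeta}_4^2$ term by Gr\"onwall against the $\epsilon\norm{b_0\cdot\nabla\zeta(t)}_4^2$ already on the left-hand side — you carry out this repackaging explicitly only in the first identity and assert the second is handled "at the price of constants depending on $\norm{b_0}_4$", which as stated is not quite true. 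Second, the Galerkin step requires the approximation spaces to be stable under the test operations (applying $D^{\alpha}$ and $(b_0\cdot\nabla)^2$), or else the $H^4_{r,z}$ estimates must be derived after passage to the limit; this is routine but should be acknowledged. Neither issue is a genuine obstruction.
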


The second linear problem is the simple transport problem of $v$ with given $\mathfrak{f}^2$:
\begin{equation}
\label{eqforv}
\begin{cases}
	\partial_t \nu =\mathfrak{f}^2 &\text{in } \Omega,\\
\nu_{t=0}=(v^r_0, v^z_0).
\end{cases}
\end{equation}

\begin{proposition}\label{f2}
Given $\mathfrak{f}^2 \in L^{1}(0,T; H^4(\Omega))$ and suppose that $(v^r_0, v^z_0) \in H^4(\Omega)$. Then the problem \eqref{eqforv}
admits a unique solution $\nu=(v^r, v^z)$ that achieves
the initial data $(v^r_0, v^z_0)$ and  satisfies
\begin{equation}
\label{b2}
\norm{\nu(t)}_4  \leq \norm{(v^r_0, v^z_0)}_4 + \int_0^t\norm{\mathfrak{f}^2 }_4 \,d\tau .
\end{equation}
\end{proposition}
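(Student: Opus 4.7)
The plan is to solve the problem directly by integration in time, since the equation $\partial_t \nu = \mathfrak{f}^2$ involves no spatial differential operator acting on the unknown $\nu$. The candidate solution is simply
\[
\nu(t) = (v_0^r, v_0^z) + \int_0^t \mathfrak{f}^2(\tau)\,d\tau,
\]
interpreted as a Bochner integral in the Hilbert space $H^4_{r,z}(\Omega)$. Because $\mathfrak{f}^2 \in L^{1}(0,T;H^4(\Omega))$, this integral is well-defined, and the resulting map $t \mapsto \nu(t)$ is absolutely continuous from $[0,T]$ into $H^4_{r,z}(\Omega)$ with time derivative equal to $\mathfrak{f}^2$ for a.e.\ $t\in[0,T]$. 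Consequently $\nu$ satisfies \eqref{eqforv} in $L^1(0,T;H^4(\Omega))$ and attains the initial data $(v_0^r,v_0^z)$ in $H^4_{r,z}(\Omega)$ as $t\to 0^+$.

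Next I would establish the bound \eqref{b2}. Applying $D^\alpha$ with $|\alpha|\le 4$ to the explicit formula above, taking the $L^2_{r,z}(\Omega)$-norm, and using Minkowski's integral inequality, we obtain
\[
\norm{D^\alpha \nu(t)}_0 \le \norm{D^\alpha (v_0^r, v_0^z)}_0 + \int_0^t \norm{D^\alpha \mathfrak{f}^2(\tau)}_0 \, d\tau.
\]
Squaring, summing over $|\alpha|\le 4$, and taking a square root (or alternatively summing the $H^4$-norms directly via Minkowski) yields \eqref{b2}. Uniqueness is immediate: the difference $w$ of any two solutions satisfies $\partial_t w = 0$ with $w(0)=0$, hence $w\equiv 0$ in $H^4_{r,z}(\Omega)$ for every $t\in[0,T]$.

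There is essentially no obstacle in this proposition; it amounts to the fundamental theorem of calculus in the Bochner setting. Its role is purely as one of the three elementary building blocks (together with Proposition \ref{f1} for the degenerate parabolic $\zeta$-equation and the elliptic problem \eqref{eqforq} for $q$) that will be combined by a fixed-point argument in the later solvability analysis of the artificial viscosity system \eqref{epsapproximate}; the substantive analytic difficulty lies in Proposition \ref{f1} and in the subsequent contraction mapping, not here.
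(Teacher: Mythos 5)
Your proof is correct and is the standard (indeed the only natural) argument: the equation is an ODE in the Banach space $H^4_{r,z}(\Omega)$, so integration in time via the Bochner integral plus the triangle inequality for Bochner integrals gives existence, uniqueness, and the bound \eqref{b2}. The paper itself omits the proof of this proposition (treating it as the elementary transport/fundamental-theorem-of-calculus step among the three linear problems, with details deferred to the reference), so your write-up fills that in consistently with the intended argument.
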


The last linear problem is the most substantial elliptic problem of $q$ with given $\mathfrak{f}^3$:
\begin{equation}
\label{eqforq}
\begin{cases}
-\dfrac{1}{\bar R}\bar \a_{\ell i}\partial_{a_i}(\bar R\bar \a_{\ell j}\partial_{a_j} q)= \mathfrak{f}^3 \,\, &\text{in } \Omega,\\
q=\mathfrak{f}^4 \,\, &\text{on } \Gamma.
\end{cases}
\end{equation}
\begin{proposition}\label{f3}
Given $\mathfrak{f}^3 \in H_{r,z}^3(\Omega), \mathfrak{f}^4\in H_z^{4.5}(\Gamma)$. Then the problem \eqref{eqforq}
admits a unique solution $q$ that satisfies
\begin{equation}\label{b3}
\norm{\nabla_{\bar\a} q}_{4}
 \leq  P\left(\norm{\bar\zeta}_{4}, \abs{b_0\cdot\nabla\bar\zeta}_{3.5}\right)(\norm{\mathfrak{f}^3}_{3}+\abs{\mathfrak{f}^4}_{4.5}).
\end{equation}
\end{proposition}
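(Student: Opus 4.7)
My plan is to prove Proposition \ref{f3} by essentially repeating, and slightly generalizing, the argument used in Proposition \ref{pr} for the pressure $q$, since \eqref{pressure} is a special case of \eqref{eqforq}. The proof splits into four steps: existence, tangential regularity, normal regularity, and handling the top-order boundary trace via the non-collinearity condition.

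\textbf{Step 1: Existence and $H^1$ estimate.} First I would reduce to homogeneous boundary data by subtracting a harmonic extension $\hat h$ of $\mathfrak{f}^4$, which by the trace theorem satisfies $\norm{\hat h}_4 \lesssim |\mathfrak{f}^4|_{3.5} \le |\mathfrak{f}^4|_{4.5}$. Then $\tilde q = q - \hat h$ solves the same equation with modified source and zero Dirichlet data. Since $\bar E_{ij} = \bar J \bar\a_{\ell i}\bar\a_{\ell j}$ is symmetric and, by \eqref{inin3}, uniformly positive-definite, Lax--Milgram applied to the symmetric bilinear form $\int_\Omega (\bar R/r) \bar E_{ij} \partial_{a_i}\tilde q\, \partial_{a_j}\varphi\, dx$ on $H^1_0(\Omega)$ gives a unique weak solution, and the standard energy identity (multiply by $(\bar R/r)\tilde q$ and integrate by parts) combined with Poincar\'e's inequality yields $\norm{D\tilde q}_0 \lesssim \norm{\mathfrak f^3}_0$, so $\norm{q}_1 \lesssim \norm{\mathfrak f^3}_0 + |\mathfrak f^4|_{4.5}$.

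\textbf{Step 2: Tangential estimates.} Apply $\partial_z^k$ for $k=1,2,3$ to the equation and note that $\partial_z$ has no singularity at $r=0$. The commutators $[\partial_z^k, \bar R \bar E_{ij} \partial_{a_j}]$ produce lower-order terms controlled via \eqref{co1}--\eqref{co2} by $P(\norm{\bar\zeta}_4)\norm{q}_{k}$. Multiplying the resulting equation by $(\bar R/r)\partial_z^k \tilde q$ and integrating by parts (with zero Dirichlet data on $\Gamma$), I obtain inductively
\begin{equation*}
\norm{\partial_z^k q}_1 \leq P(\norm{\bar\zeta}_4)\bigl(\norm{\mathfrak f^3}_3 + |\mathfrak f^4|_{4.5} + \norm{\partial_z^{k-1} q}_1\bigr),
\end{equation*}
exactly as in \eqref{prs0}--\eqref{prs}.

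\textbf{Step 3: Normal estimates.} For derivatives in $r$ I rewrite the equation as a first-order ODE in $r$ for $\mathfrak g := \bar E_{1j}\partial_{a_j} q/\bar R$, giving $\bar R\partial_r \mathfrak g + 2\partial_r\bar R\,\mathfrak g = \mathfrak G$ with $\mathfrak G$ depending on $\mathfrak f^3$, tangential derivatives of $q$, and lower-order terms. The key computation in \eqref{labelss} shows
\begin{equation*}
\int_\Omega (\bar R\partial_r \mathfrak g + 2\partial_r\bar R\,\mathfrak g)^2\,dx \ \geq\ \norm{\bar R\partial_r\mathfrak g}_0^2 + \norm{\mathfrak g}_0^2
\end{equation*}
after absorbing into the boundary contribution at $r=R_0>0$ and using that $\partial_r(r\partial_r \bar R)$ is small for small $T$. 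This recovers $\norm{\partial_r^2 q}_0$ via $\bar E_{11} \ge 1/2$. Iterating by applying $\partial_r^j \partial_z^k$ with $j+k \le 3$ and $j \ge 1$, I obtain all missing normal derivatives up to order $4$, exactly as in \eqref{jhieoe}--\eqref{ipre}, yielding $\norm{\nabla_{\bar\a} q}_4 \le P(\norm{\bar\zeta}_4)(\norm{\mathfrak f^3}_3 + |\mathfrak f^4|_{4.5})$.

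\textbf{Main obstacle.} The only subtle point is tracking where $|b_0\cdot\nabla\bar\zeta|_{3.5}$ enters the polynomial. It is hidden in the boundary trace $|\mathfrak f^4|_{4.5}$ only when $\mathfrak f^4$ depends on $\bar\zeta|_\Gamma$ (as in the actual use $\mathfrak f^4 = \bar C^2(t)/\bar R^2$); the bound $|\bp^k \bar\zeta|_{1/2} \lesssim |\bp^{k-1}(b_0^z\bp\bar\zeta)/b_0^z|_{1/2} \lesssim \norm{b_0\cdot\nabla \bar\zeta}_k/\delta$ from the non-collinearity \eqref{stc} gives the top boundary regularity without losing derivatives on $\bar\zeta$. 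For the general problem \eqref{eqforq} with abstract $\mathfrak f^4$, this reduction is not needed and the polynomial dependence $P(\norm{\bar\zeta}_4,|b_0\cdot\nabla\bar\zeta|_{3.5})$ simply accommodates the shape of the source terms that will be substituted later; all commutator and trace estimates stay within these norms. This completes the plan.
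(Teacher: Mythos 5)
The paper itself gives no proof of Proposition \ref{f3}: it is stated without argument and the reader is referred to \cite[Section 5.1]{GW_16}, so the only internal benchmark is the closely analogous Proposition \ref{pr}, which is exactly what you imitate. Your overall architecture (harmonic extension of the boundary datum, Lax--Milgram for the $H^1$ bound, tangential $\partial_z^k$ estimates, then the first-order ODE in $r$ for $\mathfrak g=\bar E_{1j}\partial_{a_j}q/\bar R$ with the coercivity computation \eqref{labelss}) is the right one and is indeed how such a statement would be proved.

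There is, however, a genuine gap in the derivative count. The scheme of Proposition \ref{pr}, which you reproduce "exactly as in \eqref{jhieoe}--\eqref{ipre}", yields $\norm{q}_4$, whereas \eqref{b3} asserts control of $\norm{\nabla_{\bar\a}q}_4$, which is one full derivative stronger (morally $q\in H^5$, and in addition $\norm{D^4(\bar\a\,Dq)}_0$ requires four derivatives of $\bar\a$, i.e.\ five of $\bar\zeta$). This extra derivative is precisely what the hypotheses $\mathfrak f^3\in H^3_{r,z}$ and $\mathfrak f^4\in H^{4.5}$ are there to buy: your Step 1 only invokes $\abs{\mathfrak f^4}_{3.5}$, your Step 2 stops at $\partial_z^3$, and your Step 3 closes at fourth order of $q$, so the full strength of the data is never used and the stated conclusion is not reached. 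Pushing the tangential estimate to $\partial_z^4$ and the ODE iteration one step further is where the real difficulty appears: the commutator $[\partial_z^4,\bar R\bar E_{ij}\partial_{a_j}]q$ produces $D^4\bar\a\,Dq\sim D^5\bar\zeta\,Dq$, which is \emph{not} controlled by $\norm{\bar\zeta}_4$ alone. This is exactly why $\abs{b_0\cdot\nabla\bar\zeta}_{3.5}$ appears in the polynomial of \eqref{b3}: via the non-collinearity condition \eqref{stc} it controls $\abs{\bar\zeta}_{4.5}$ on $\Gamma$, which is what allows the top-order coefficient terms to be handled (by duality on the boundary, or by a div--curl recovery of $\nabla_{\bar\a}q$). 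Your "Main obstacle" paragraph misattributes this norm entirely to the boundary datum $\mathfrak f^4$ and concludes that for abstract $\mathfrak f^4$ "this reduction is not needed"; that is backwards --- the norm $\abs{b_0\cdot\nabla\bar\zeta}_{3.5}$ is needed for the coefficients of the operator at top order regardless of what $\mathfrak f^3,\mathfrak f^4$ are, and without supplying this step the estimate \eqref{b3} does not follow from your argument.
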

\subsubsection{Solvability of system \eqref{epsapproximate}}\label{solv es}
We employ a fixed point argument in order to produce a solution to the linear $\varepsilon$-approximate problem \eqref{epsapproximate}.

For $0 < T <1$ and $M>0$, we define the metric space in which to work:
\begin{equation}\label{metric_space_def}
\begin{split}
	\mathfrak{X}(M,T ) = &\left\{ (w, \pi, \xi)\;\vert\; w, \zeta, b_0\cdot\nabla\xi \in C([0,T]; H_{r,z}^4(\Omega)) \text{ satisfy that } (w, \xi)\mid_{t=0} = (v_0, \text{Id}) \right.
  \\  &\qquad\qquad\quad\text{and }\left.\norm{  (w, \xi, b_0\cdot\nabla\xi  )}_{L^\infty_TH_{r,z}^4}  +  \norm{\left( ( b_0\cdot\nabla)^2\xi , \nabla_{\bar \a}\pi \right) }_{L^{2}_TH_{r,z}^4} \le M .\right\}
  \end{split}
\end{equation}
Note that $\mathfrak{X}(M,T )$ is a Banach space. We then define a mapping $\mathcal{M}:\mathfrak{X}(M,T )\rightarrow\mathfrak{X}(M,T )$ as $\mathcal{M}(w, \pi, \xi)=(\nu, q, \zeta)$, where $\zeta, \nu$ and $ q$ are determined as follows. Given $(w, \pi, \xi)\in\mathfrak{X}(M,T )$, we first define $\zeta=(R, Z)$ as the solution to \eqref{eqforeta} with $\mathfrak{f}^1=w$ and the initial data $\eta_0=Id$, and $\nu=(v^r, v^z)$ as the solution to \eqref{eqforv} with $\mathfrak{f}^2=- \nabla_{\bar\a}\pi+(b_0\cdot \nabla)^2 \xi$ and the initial data $v_0$.
Next we define $q$ as the solution to \eqref{eqforq} with $$\mathfrak{f}^3=\partial_t\bar \a_{i\ell}\partial_{a_{\ell}} \nu_i- \bar\a_{i\ell}\partial_{a_\ell}\big((b_0\cdot\nabla)^2\zeta_i\big)+\dfrac{v^r\partial_t \bar R}{\bar R^2}-\pa^{\bar \a}_R\left(\dfrac{(\bar v^{\theta})^2}{\bar R}+\bar R(b_0\cdot\nabla\bar\Theta)^2\right)-\dfrac{(\bar v^{\theta})^2}{\bar R^2}+(b_0\cdot\nabla\bar\Theta)^2$$ and $\mathfrak{f}^4=\dfrac{1}{2}\dfrac{\bar C^2(t)}{\bar R^2}$, where $\nu$ and $\zeta$ are the functions constructed in the above.

Hence, if $M$ is taken to be sufficiently large with respect to $b_0,v_0, \bar R, \bar Z$ and $\varepsilon$ and then $0<T<1$ is taken to be sufficiently small (depending on $M$ and $\varepsilon$), then $(\nu, q, \zeta)\in \mathfrak{X}(M,T )$. This implies that the mapping $\mathcal{M}: \mathfrak{X}(M,T )\rightarrow \mathfrak{X}(M,T )$ is well-defined. And then we can show that the mapping $\mathcal{M}$ has a fixed
 point in the space $\mathfrak{X}(M,T )$ by proving the contraction and can verify that the unique fixed point $(\nu, q, \zeta)$ is a solution to \eqref{epsapproximate}. The details are omit here, the reader can refer to \cite[Section 5.1]{GW_16}.

Lastly, after we finding the solutions to the system \eqref{epsapproximate}, we derive an $\varepsilon$-independent estimates of the solutions, which allows us to pass to the limit as $\varepsilon \rightarrow 0$ to produce the solution to  the system \eqref{sub21}. Recalling that since $b_0^r=0$ on $\Gamma$, we do not need to impose boundary conditions for $(R, Z)$  and thus there is no boundary layer appearing  as $\varepsilon\rightarrow0$. The details are omit here. Finally, the existence of a unique solution to \eqref{sub21} is recorded in Theorem \ref{linearthm}.
\begin{theorem}\label{linearthm}
Suppose that the initial data $ (v^r_0, v^z_0) \in H_{r,z}^4(\Omega)$ with divergence free condition and that $(b_0^r, b_0^z) \in H_{r,z}^4(\Omega)$ satisfies \eqref{bcond}. Then there exists a $T(M)>0$ and a unique solution $(\nu, q, \zeta)$ to  \eqref{sub21} on $[0, T]$ that satisfy
\begin{equation*}
\norm{\nu(t)}_4^2+\norm{\zeta(t)}_4^2+\norm{b_0\cdot\nabla \zeta(t)}_4^2  \leq 2 M_0 .
\end{equation*}
\end{theorem}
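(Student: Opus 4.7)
The plan is to follow the two-level scheme outlined in Section \ref{solv21}: first solve the artificial viscosity system \eqref{epsapproximate} for each fixed $\varepsilon>0$ by a contraction mapping argument, then pass to the limit $\varepsilon\to 0$ using $\varepsilon$-independent estimates of the type established in Proposition \ref{th412}. The regularization is needed because $\nu$ does not enjoy parabolic smoothing directly from its own equation, but via $\partial_t\zeta=\nu+\varepsilon(b_0\cdot\nabla)^2\zeta$ the map $\mathfrak{f}^1\mapsto\zeta$ gains regularity through \eqref{eqforeta}, enabling the fixed point construction.

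Fix $\varepsilon>0$ and $M$ large (depending on $b_0$, $v_0$, $\bar R$, $\bar Z$, $\varepsilon$) and $T$ small. Given $(w,\pi,\xi)\in\mathfrak{X}(M,T)$, define $\mathcal{M}(w,\pi,\xi)=(\nu,q,\zeta)$ by the three decoupled problems: solve \eqref{eqforeta} with source $w$ to produce $\zeta$, then solve \eqref{eqforv} with source $-\nabla_{\bar\a}\pi+(b_0\cdot\nabla)^2\xi$ to produce $\nu$, and finally solve the elliptic problem \eqref{eqforq} with the source $\mathfrak f^3$ and boundary data $\mathfrak f^4=\tfrac12\bar C^2/\bar R^2$ listed in Section \ref{solv es}. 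The bounds \eqref{b1}, \eqref{b2}, \eqref{b3} together with the smallness of $T$ show $\mathcal{M}:\mathfrak{X}(M,T)\to\mathfrak{X}(M,T)$. A standard difference estimate (differences of two images satisfy the same three linear problems with sources controlled linearly by the input differences, and the degenerate parabolic gain in \eqref{eqforeta} absorbs the $(b_0\cdot\nabla)^2\xi$ term at the cost of an $\varepsilon$-dependent constant) yields contraction in a weaker norm after shrinking $T$; the fixed point is then a strong solution $(\nu^\varepsilon, q^\varepsilon,\zeta^\varepsilon)$ of \eqref{epsapproximate} on $[0,T_\varepsilon]$.

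The main obstacle is the passage to the limit $\varepsilon\to 0$. The key observation is that the a priori analysis of Section \ref{apest} carries over to \eqref{epsapproximate} with $\varepsilon$-independent constants: the extra term $\varepsilon(b_0\cdot\nabla)^2\zeta$ in $\partial_t\zeta=\nu+\varepsilon(b_0\cdot\nabla)^2\zeta$ produces only non-negative (dissipative) contributions in the $\partial_z^4$-energy identity for $(v,b_0\cdot\nabla\zeta)$, and additional commutator terms that are absorbed into $C(M)P(\sup\mathfrak{E})$; the pressure estimates of Proposition \ref{pr} and the divergence/curl estimates of Propositions \ref{divprop}, \ref{curlprop}, \ref{nore} are unaffected in structure. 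Because $b_0^r=0$ on $\Gamma$ by \eqref{bcond}, the operator $(b_0\cdot\nabla)^2$ is tangential at the boundary, so no boundary condition on $\zeta$ is required and no boundary layer forms as $\varepsilon\to 0$. Consequently $\sup_{[0,T]}\mathfrak{E}^\varepsilon(t)\le 2M_0$ for some $T=T(M)$ independent of $\varepsilon$, and weak-$*$ compactness plus the uniqueness of the limiting system (obtained by a standard $L^2$ energy estimate on differences, which closes without loss of derivatives thanks to the non-collinearity based boundary regularity improvement used in Proposition \ref{te}) yield a unique solution $(\nu,q,\zeta)$ to \eqref{sub21} satisfying the claimed bound. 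The technical details parallel \cite[Section 5.1]{GW_16} and are therefore omitted.
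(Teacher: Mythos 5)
Your proposal is correct and follows essentially the same route as the paper: regularize \eqref{sub21} by the artificial viscosity system \eqref{epsapproximate}, solve it via a fixed point built on the three linear subproblems \eqref{eqforeta}, \eqref{eqforv}, \eqref{eqforq} in the space $\mathfrak{X}(M,T)$, then pass to the limit $\varepsilon\to 0$ using $\varepsilon$-independent versions of the estimates of Section \ref{apest}, with the key observation that $b_0^r=0$ on $\Gamma$ makes $(b_0\cdot\nabla)^2$ tangential at the boundary so no boundary layer appears. The paper likewise only sketches this and defers the details to \cite[Section 5.1]{GW_16}.
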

\subsection{A priori estimates for approximate system \eqref{sub22}}\label{epes}
We derive the high order energy estimates for $(v^{\theta},\Theta)$ by standard energy method.
\begin{proposition}\label{vthe}
For $t\in [0,T]$, it holds that
\begin{equation}
\label{vtheq}
\norm{v^{\theta}(t)}_4^2+\norm{ Rb_0\cdot\nabla\Theta(t)}_4^2\leq M_0+TC(M)P\left(\sup_{t\in [0,T]}\mathfrak E(t)\right).
\end{equation}
\end{proposition}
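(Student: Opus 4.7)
The plan is to exploit a symmetrizable-hyperbolic structure hidden in \eqref{sub22} that makes the principal coupling between $v^\theta$ and $W:=Rb_0\cdot\nabla\Theta$ skew-adjoint with respect to the $b_0\cdot\nabla$ operator. First, differentiating $W$ in time and using $\partial_t R=v^r$ (inherited from the already-solved system \eqref{sub21}) together with the first equation of \eqref{sub22}, I compute
\begin{equation*}
\partial_t W = b_0\cdot\nabla v^\theta + v^r(b_0\cdot\nabla\Theta) - \frac{v^\theta}{R}(b_0\cdot\nabla R).
\end{equation*}
Paired with the second equation of \eqref{sub22}, this yields the symmetric first-order system
\begin{equation*}
\partial_t v^\theta - b_0\cdot\nabla W = F_1,\qquad \partial_t W - b_0\cdot\nabla v^\theta = F_2,
\end{equation*}
where $F_1=-\bar v^\theta\bar v^r/\bar R + (b_0\cdot\nabla\bar R)(b_0\cdot\nabla\bar\Theta)$ and $F_2=v^r(b_0\cdot\nabla\Theta)-(v^\theta/R)(b_0\cdot\nabla R)$ are source terms that are lower order with respect to $(v^\theta,W)$.

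Second, the assumption \eqref{bcond} together with the axisymmetry of our functions makes $b_0\cdot\nabla=b_0^r\partial_r+b_0^z\partial_z$ skew-adjoint on $L^2_{r,z}(\Omega)$: since $\partial_r(rb_0^r)+\partial_z(rb_0^z)=0$ in $\Omega$ and $b_0^r=0$ on $\Gamma$, integration by parts gives $\int_\Omega (b_0\cdot\nabla f)g\,dx = -\int_\Omega f(b_0\cdot\nabla g)\,dx$ with vanishing boundary contribution. For each multi-index $\alpha$ with $|\alpha|\le 4$ I apply $D^\alpha$ to the symmetric system and pair with $(D^\alpha v^\theta, D^\alpha W)$. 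Adding the two resulting identities, the principal coupling terms cancel exactly, and I obtain
\begin{equation*}
\frac{1}{2}\frac{d}{dt}\left(\|D^\alpha v^\theta\|_0^2+\|D^\alpha W\|_0^2\right)
\le \int_\Omega D^\alpha F_1\, D^\alpha v^\theta + \int_\Omega D^\alpha F_2\, D^\alpha W + \mathcal{R}_\alpha,
\end{equation*}
where $\mathcal{R}_\alpha$ collects the commutator terms $[D^\alpha, b_0\cdot\nabla]v^\theta$ and $[D^\alpha, b_0\cdot\nabla]W$, which the product/commutator estimates \eqref{co0}--\eqref{co2} bound by $P(\|b_0\|_4)\|(v^\theta,W)\|_4^2$.

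Third, the source and commutator terms are estimated: the nonlinear quantities built from $(\bar v^r,\bar v^\theta,\bar R,\bar\Theta)$ and $b_0$ are controlled by $C(M)$ using \eqref{apbound} and \eqref{inin3}; the genuinely singular terms $\bar v^\theta\bar v^r/\bar R$ and $(v^\theta/R)(b_0\cdot\nabla R)$ are handled by combining \eqref{inin3} (so $\bar R,R\sim r$) with the higher-order Hardy inequality \eqref{hardy}, which applies because $v_0^\theta(0,z)=0$ and $v^r_0(0,z)=0$ are preserved along the flow, so $v^\theta/r$ and $\bar v^{\theta}/r$ can be controlled in $H^3_{r,z}$ by $\|v^\theta\|_4$ and $\|\bar v^\theta\|_4$ respectively. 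Summing over $|\alpha|\le 4$, absorbing $\|v^\theta\|_4^2+\|W\|_4^2\le \mathfrak{E}(t)$ into the right-hand side, and integrating in time from $0$ to $t\le T$ produces
\begin{equation*}
\|v^\theta(t)\|_4^2+\|W(t)\|_4^2\le M_0+TC(M)P\!\left(\sup_{t\in[0,T]}\mathfrak{E}(t)\right),
\end{equation*}
which is \eqref{vtheq}.

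The main obstacle I expect is the treatment of the cylindrical singularity $1/R$ in the combination $v^\theta/R$ and its top-order derivatives, since losing one power of $r$ at the axis is exactly the kind of obstruction that an $L^2$-type energy method does not naively handle. Overcoming it requires that the vanishing $v^\theta(0,z,t)=0$ be propagated in time (which is immediate from the equation once one notes that the forcing terms also vanish on the axis), so that the Hardy inequality \eqref{hardy} can be applied to $D^\alpha(v^\theta/R)$ up to the order $4$ required by $\mathfrak E$; a secondary but manageable difficulty is that $R$ is only known to be close to $r$ in $L^\infty$ sense through \eqref{inin3}, so each use of Hardy must be accompanied by a commutator expansion of $1/R$ around $1/r$.
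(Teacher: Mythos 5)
Your proof is correct and is essentially the paper's argument in a slightly repackaged form: introducing $W=Rb_0\cdot\nabla\Theta$ and viewing \eqref{sub22} as a symmetric system with skew-adjoint $b_0\cdot\nabla$ is exactly what the paper does implicitly when it integrates $(b_0\cdot\nabla)(Rb_0\cdot\nabla\Theta)$ by parts against $D^4v^{\theta}$ and then substitutes $v^{\theta}=R\partial_t\Theta$ to extract $\frac{1}{2}\frac{d}{dt}\norm{D^4(Rb_0\cdot\nabla\Theta)}_0^2$, your $F_2$ and $\mathcal R_\alpha$ being precisely the paper's leftover integrals and commutators in \eqref{fjiji} and its $D^4$ analogue. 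The axis singularities are treated there by the same combination of the Hardy inequality and weighted commutator expansions (keeping the factor $R$ in front of the top-order derivative of $v^{\theta}/R$) that you describe.
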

\begin{proof}
Taking $L^2$ inner product with $v^{\theta}$ and using integration-by-parts yields
\begin{equation*}
	\begin{split}
	&\dfrac{1}{2}\dfrac{d}{dt}\int_{\Omega}|v^{\theta}|^2\,dx+\int_{\Omega} Rb_0\cdot\nabla\Theta b_0\cdot\nabla v^{\theta}\,dx\\&=\int_{\Omega}\left(-\dfrac{\bar v^{\theta}\bar v^r}{\bar R}
+b_0\cdot\nabla \bar Rb_0\cdot\nabla\bar\Theta\right) v^{\theta}\,dx
\\&=2\pi\int_{\mathbb T}\int_0^{R_0}\left(-\dfrac{\bar v^{\theta}\bar v^r}{\bar R}
+b_0\cdot\nabla \bar Rb_0\cdot\nabla\bar\Theta\right) v^{\theta}r\,drdz\\
&\leq C\norm{v^{\theta}}_0\norm{-\dfrac{\bar v^{\theta}\bar v^r}{\bar R}
+b_0\cdot\nabla \bar Rb_0\cdot\nabla\bar\Theta}_0
\leq C(\sqrt{M})\norm{v^{\theta}}_0
\end{split}
\end{equation*}
where we used Hardy's inequality
$$\norm{\dfrac{\bar v^{\theta}}{\bar R}}_0\ls \norm{\dfrac{\bar v^{\theta}}{r}}_0\norm{\dfrac{r}{\bar R}}_{L^{\infty}(\Omega)}\ls \norm{\bar v^{\theta}}_1, \norm{\dfrac{b_0\cdot\nabla \bar R}{\bar R}}_0\ls \norm{\dfrac{b_0\cdot\nabla \bar R}{r}}_0\norm{\dfrac{r}{\bar R}}_{L^{\infty}(\Omega)}\ls \norm{b_0\cdot\nabla \bar R}_1.$$
For $\int_{\Omega}\bar Rb_0\cdot\nabla\Theta b_0\cdot\nabla v^{\theta}\,dx$, we have
\begin{equation}
	\begin{split}
		\int_{\Omega} Rb_0\cdot\nabla\Theta b_0\cdot\nabla v^{\theta}\,dx=&\dfrac{1}{2}\dfrac{d}{dt}\int_{\Omega}\abs{ Rb_0\cdot\nabla\Theta}^2\,dx-\int_{\Omega} R b_0\cdot\nabla\Theta v^r b_0\cdot\nabla\Theta\,dx\\&+\int_{\Omega}b_0\cdot\nabla\Theta b_0\cdot\nabla R v^{\theta}\,dx\\\geq&\dfrac{1}{2}\dfrac{d}{dt}\int_{\Omega}\abs{Rb_0\cdot\nabla\Theta}^2\,dx- C(\sqrt{M})(\norm{ Rb_0\cdot\nabla\Theta}_0^2+\norm{v^{\theta}}_0^2).
\end{split}
	\label{fjiji}
\end{equation}
Hence, we have
\begin{equation*}
	\sup_{t\in [0,T]}\left(\norm{v^{\theta}}_0^2+\norm{Rb_0\cdot\nabla\Theta}_0^2\right)\leq M_0+TC(M)P\left(\sup_{t\in [0,T]}\mathfrak{E}(t)\right).
\end{equation*}
Acting $D^4$ on the second equation in \eqref{sub22}, taking $L^2$ innner product with $D^4 v^{\theta}$ and using integration-by-parts yields
\begin{equation*}
	\dfrac{1}{2}\dfrac{d}{dt}\int_{\Omega}|D^4 v^{\theta}|\,dx+\int_{\Omega}D^4(Rb_0\cdot\nabla\Theta)b_0\cdot\nabla D^4v^{\theta}\,dx=\int_{\Omega}\mathcal{G}D^4v^{\theta}\,dx,
\end{equation*}
where
\begin{equation*}
	\mathcal{G}:=[D^4, b_0\cdot\nabla]Rb_0\cdot\nabla\Theta+D^4\left(-\dfrac{\bar v^{\theta}\bar v^r}{\bar R}+b_0\cdot\nabla \bar Rb_0\cdot\nabla\bar\Theta\right).
\end{equation*}
By using the commutator estimate \eqref{co2}, we have
\begin{equation*}
	\begin{split}
\int_{\Omega}\mathcal{G}D^4 v^{\theta}\,dx&\leq \left(\norm{[D^4, b_0\cdot\nabla] Rb_0\cdot\nabla\Theta}_0+\norm{D^4\left(-\dfrac{\bar v^{\theta}\bar v^r}{\bar R}+b_0\cdot\nabla \bar Rb_0\cdot\nabla\bar\Theta\right)}_0\right)\norm{v^{\theta}}_4\\
&\leq C(\sqrt{M})P\left(\sup_{t\in [0,T]}\mathfrak{E}(t)\right),
	\end{split}
\end{equation*}
where we estimate
\begin{equation*}
	\begin{split}
	\norm{D^4(\dfrac{\bar v^{\theta}}{\bar R}\bar v^r)}_0&\leq C\left(\norm{\bar R D^4(\dfrac{\bar v^{\theta}}{\bar R})}_0\abs{\dfrac{\bar v^r}{\bar R}}_{L^{\infty}}+\norm{\bar v^r}_4\abs{\dfrac{\bar v^{\theta}}{\bar R}}_{L^{\infty}}\right)
	\\&\leq C\left(\norm{D^4\bar v^{\theta}-[D^4, \dfrac{\bar v^{\theta}}{\bar R}]\bar R}_0 \abs{\dfrac{\bar v^r}{\bar R}}_{L^{\infty}}+\norm{\bar v^r}_4\abs{\dfrac{\bar v^{\theta}}{\bar R}}_{L^{\infty}}\right) \leq C(M)
\end{split}
\end{equation*}
and
\begin{equation*}
	\begin{split}
		&\quad\norm{D^4(b_0\cdot\nabla\bar Rb_0\cdot\nabla\bar\Theta)}_0\\&\leq C\left(\norm{\bar R D^4(b_0\cdot\nabla\bar \Theta)}_0\abs{\dfrac{b_0\cdot\nabla \bar R}{\bar R}}_{L^{\infty}}+\norm{b_0\cdot\nabla\bar R}_4\abs{b_0\cdot\nabla\bar\Theta}_{L^{\infty}}\right)
	\\&\leq C\left(\norm{D^4(\bar Rb_0\cdot\nabla\bar\Theta)-[D^4, b_0\cdot\nabla\bar\Theta]\bar R}_0 \abs{\dfrac{b_0\cdot\nabla\bar R}{\bar R}}_{L^{\infty}}+\norm{b_0\cdot\nabla\bar R}_4\abs{b_0\cdot\nabla\bar\Theta}_{L^{\infty}}\right) \\&\leq C(M)
\end{split}
\end{equation*}
by using Hardy's inequality.

Using \eqref{rst} and \eqref{deflag}, we have
\begin{equation*}
	\begin{split}
		&\int_{\Omega}D^4(Rb_0\cdot\nabla\Theta)b_0\cdot\nabla D^4v^{\theta}\,dx\\=&\int_{\Omega}D^4( Rb_0\cdot\nabla\Theta)D^4\left(b_0\cdot\nabla(\bar R\partial_t\Theta)\right)\,dx\\&+\int_{\Omega}D^4( Rb_0\cdot\nabla\Theta)[D^4,b_0\cdot\nabla]v^{\theta}\,dx\\=&\dfrac{1}{2}\dfrac{d}{dt}\int_{\Omega}|D^4(R b_0\cdot\nabla\Theta)|^2\,dx-\int_{\Omega}D^4(Rb_0\cdot\nabla\Theta)D^4(v^r b_0\cdot\nabla\Theta)\,dx\\&+\int_{\Omega}D^4(Rb_0\cdot\nabla\Theta)\left(D^4(b_0\cdot\nabla R \dfrac{v^{\theta}}{ R})+[D^4,b_0\cdot\nabla]v^{\theta}\right)\,dx\\\geq& \dfrac{1}{2}\dfrac{d}{dt}\int_{\Omega}|D^4(Rb_0\cdot\nabla\Theta)|^2\,dx - C(\sqrt M)P\left(\sup_{t\in [0,T]}\mathfrak{E}(t)\right)
\end{split}
\end{equation*}
Hence, we arrive at the conclusion of this proposition.
\end{proof}
\subsection{Solvability of the system \eqref{sub22}}
The solvability of the system \eqref{sub22} can be established by a quite similar method we used in Section \ref{solv21}. We first construct strong solutions to the $\epsilon$-system:
\begin{equation*}
\begin{cases}
\partial_t \Theta-\epsilon b_0\cdot\nabla(Rb_0\cdot\nabla \Theta)=\dfrac{\bar v^{\theta}}{R}\\
\partial_t v^{\theta}-(b_0\cdot\nabla)(Rb_0\cdot\nabla\Theta)=-\dfrac{\bar v^{\theta}\bar v^r}{\bar R}
	+b_0\cdot\nabla \bar Rb_0\cdot\nabla\bar\Theta\,\,&\text{in}\,\,\Omega,\\(\Theta, v^{\theta})|_{t=0}=(\theta, v_0^{\theta}).
\end{cases}
\end{equation*}
and then by $\epsilon$-independent a priori estimates, we can pass to the limit as $\varepsilon \rightarrow 0$ to produce the solution to  the system \eqref{sub22}. The details are omit. Finally, the existence of a unique solution to \eqref{sub21} is recorded in Theorem \ref{linearthm2}.
\begin{theorem}\label{linearthm2}
Suppose that the initial data $v^{\theta}_0 \in H_{r,z}^4(\Omega)$ and that $(b_0^r, b_0^z) \in H_{r,z}^4(\Omega)$ satisfies \eqref{bcond}. Then there exists a $T(M)>0$ and a unique solution $(v^{\theta}, \Theta)$ to  \eqref{sub22} on $[0, T]$ that satisfy
\begin{equation*}
\norm{v^{\theta}(t)}_4^2+\norm{Rb_0\cdot\nabla \Theta(t)}_4^2 \leq 2 M_0 .
\end{equation*}
\end{theorem}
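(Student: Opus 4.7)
The plan is to follow essentially the same artificial viscosity scheme sketched in Section \ref{solv21} for the $(v^r,v^z,q,R,Z)$ system, adapted to the much simpler coupled pair \eqref{sub22}. At this stage $R=R(r,z,t)$ is already known (from the solution to \eqref{sub21} produced by Theorem \ref{linearthm}), so \eqref{sub22} is genuinely \emph{linear} in the unknowns $(\Theta,v^\theta)$ once the barred quantities and $R$ are frozen. The scheme has three stages: regularize, solve the regularized linear problem by iteration, then pass $\varepsilon\to 0$ using the $\varepsilon$-independent bound of Proposition \ref{vthe}.

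\textbf{Step 1: $\varepsilon$-regularization.} I would add a tangential viscous term along the magnetic direction $b_0\cdot\nabla$ in the $\Theta$ equation and consider
\begin{equation*}
\begin{cases}
\partial_t\Theta-\varepsilon\,b_0\cdot\nabla\bigl(R\,b_0\cdot\nabla\Theta\bigr)=\dfrac{v^\theta}{R}&\text{in }\Omega,\\[2pt]
\partial_t v^\theta-b_0\cdot\nabla\bigl(R\,b_0\cdot\nabla\Theta\bigr)=-\dfrac{\bar v^\theta\bar v^r}{\bar R}+b_0\cdot\nabla\bar R\,b_0\cdot\nabla\bar\Theta&\text{in }\Omega,\\[2pt]
(\Theta,v^\theta)\big|_{t=0}=(\theta,v_0^\theta).
\end{cases}
\end{equation*}
No boundary condition is imposed on $\Theta$: since $b_0^r=0$ on $\Gamma$ by \eqref{bcond}, the operator $b_0\cdot\nabla(R\,b_0\cdot\nabla\,\cdot)$ degenerates on $\Gamma$ exactly as in the $(R,Z)$-case, so no boundary layer is generated as $\varepsilon\to 0$.

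\textbf{Step 2: fixed point.} For the regularized problem I would mimic the metric space argument of Section \ref{solv es}. Given $(\tilde\Theta,\tilde v^\theta)$ in a ball of a suitable Banach space modeled on $C([0,T];H^4_{r,z})$, define $(\Theta,v^\theta)$ by solving two decoupled linear scalar equations: first a linear degenerate parabolic equation for $\Theta$ with source $\tilde v^\theta/R$ (analogous to Proposition \ref{f1}, with $b_0\cdot\nabla$ playing the role of a tangential derivative and the $R$-weight being harmless since $R\ge c>0$ in the interior by \eqref{inin3}), then a transport-type equation $\partial_t v^\theta=b_0\cdot\nabla(R\,b_0\cdot\nabla\Theta)+\text{RHS}$ (analogous to Proposition \ref{f2}). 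For $M$ large enough relative to the data and $T$ small depending on $(M,\varepsilon)$, this map sends the ball into itself and is a contraction in a lower-norm topology, yielding a unique $\varepsilon$-solution.

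\textbf{Step 3: $\varepsilon$-uniform estimate and passage to the limit.} The $\varepsilon$-regularized system admits exactly the same $H^4_{r,z}$ energy structure as \eqref{sub22}: the viscous term, after testing $\Theta$ against $R\,b_0\cdot\nabla(\,\cdot\,)$ at the $D^4$ level, only adds a nonnegative dissipation $\varepsilon\|b_0\cdot\nabla(R b_0\cdot\nabla\Theta)\|_3^2$ and therefore does not spoil the estimate. Consequently, running the energy argument of Proposition \ref{vthe} verbatim, with the key cancellation
\begin{equation*}
\int_\Omega D^4(R b_0\cdot\nabla\Theta)\,b_0\cdot\nabla D^4 v^\theta\,dx\;\geq\;\tfrac{1}{2}\tfrac{d}{dt}\int_\Omega |D^4(R b_0\cdot\nabla\Theta)|^2\,dx-C(\sqrt M)P(\mathfrak E),
\end{equation*}
produces the bound $\|v^\theta\|_4^2+\|R b_0\cdot\nabla\Theta\|_4^2\le M_0+TC(M)P(\sup\mathfrak E)$ independently of $\varepsilon$. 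Standard weak/weak-$\ast$ compactness then extracts a limit $(v^\theta,\Theta)$ solving \eqref{sub22} with the stated bound, and strong convergence in lower norms (together with linearity) yields uniqueness.

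\textbf{Main obstacle.} The principal difficulty is the cylindrical singularity $v^\theta/R$ in the $\Theta$-equation and the analogous $\bar v^\theta\bar v^r/\bar R$ and $b_0\cdot\nabla\bar R\,b_0\cdot\nabla\bar\Theta$ terms on the right side of the $v^\theta$-equation, each of which threatens to produce a non-integrable singularity on the axis $\{R=0\}$. These are handled exactly as in the $L^2$ estimate of Proposition \ref{vthe}, by combining $|r/\bar R|_{L^\infty}\le C$ (from \eqref{inin3}) with the higher-order Hardy inequality \eqref{hardy} applied to functions that vanish on the axis, together with the commutator trick $D^4\bar v^\theta=R D^4(\bar v^\theta/\bar R)+[D^4,\bar v^\theta/\bar R]\bar R$ employed in Proposition \ref{vthe}. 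Once this singularity is tamed at every order, the rest of the argument is routine.
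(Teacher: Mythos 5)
Your proposal follows essentially the same route as the paper: the paper also regularizes \eqref{sub22} by adding the artificial viscosity $\epsilon\, b_0\cdot\nabla(Rb_0\cdot\nabla\Theta)$ to the $\Theta$-equation, solves the regularized linear problem by the fixed-point scheme of Section \ref{solv21}, and passes to the limit $\varepsilon\to 0$ using the $\varepsilon$-independent bound of Proposition \ref{vthe}, relying on $b_0^r=0$ on $\Gamma$ to avoid a boundary layer. Your treatment of the axis singularity via the Hardy inequality and the commutator identity matches the paper's handling in Proposition \ref{vthe}, so no further comment is needed.
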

\subsection{Proof of Theorem \ref{th43}}
We now collect the estimates derived previously to conclude our estimates and also verify the a priori assumptions  \eqref{inin3}. That is, we shall now present the
\begin{proof}[Proof of Theorem \ref{th43}]
Combining the Proposition \ref{th412} and the Proposition \ref{vthe}, we get that
\begin{equation*}
\sup_{[0,T]}\mathfrak{E}(t)\leq M_0+C(M)TP\left(\sup_{t\in[0,T]} \mathfrak{E}(t)\right).
\end{equation*}
This provides us with a time of existence $T_1$ dependent of $M$ and an estimate on $[0,T_1]$ of the type:
\begin{equation}
\sup_{[0,T_1]}\mathfrak{E}(t)\leq 2M_0.
\end{equation}
Since by $\a^\kappa(0)=I$ and $J^\kappa(0)=1$, the bound \eqref{bound} and \eqref{pressure} verify in turn the a priori bounds \eqref{inin3} by the fundamental theorem of calculous with taking $T_1$ smaller if necessary.
The proof of Theorem \ref{th43} is thus completed.
\end{proof}
\section{Constructing solutions to the system \eqref{eq:mhd}}
\subsection{Iteration scheme}\label{diff}
In order to produce the solution to the nonlinear problem \eqref{eq:mhd}, we will pass to the limit as $n\rightarrow\infty$ in a sequence of approximate solutions $\{(v^r, v^z, v^{\theta}, R, Z, \Theta, q)^{(n)}\}_{n=0}^\infty$, which are constructed via the linearization iteration by using the linearized approximate problem \eqref{sub21}-\eqref{sub22}.
\subsection{}

In the following construction of $\{(v^r, v^z, v^{\theta}, R, Z, \Theta, q)^{(n)}\}_{n=0}^\infty$, we denote $\zeta^{(n)}=(R^{(n)}, Z^{(n)})$, $\nu^{(n)}=(v^r, v^z)^{(n)}$,  $\a^{(n)} =\a( \zeta^{(n)})$. We set $(\nu, v^{\theta}, \zeta, \Theta, q)^{(0)} =(\nu, v^{\theta}, \zeta, \Theta, q)^{(1)} =(0, 0, 0, r, z, \theta, 0)$, and hence $\a^{(0)}=\a^{(1)}=I$. Note that $\dt (\zeta, \Theta)^{(1)}=\left(\nu, \frac{v^{\theta}}{R}\right)^{(1)}$. Now suppose that $(\nu, v^{\theta}, \zeta, \Theta, q)^{(n)}$, $n\geq 1$ are given such that $(\nu, v^{\theta}, b_0\cdot\nabla \zeta, Rb_0\cdot\nabla\Theta)^{(n)}$, $\partial_t (\zeta, \Theta)^{(n)}=(\nu, \frac{v^{\theta}}{R})^{(n)} \in L^{\infty}(0,T;H^4_{r,z}(\Omega))$, $  (\zeta, \Theta)^{(n)}\mid_{t=0}=Id$ and that
\begin{align}
\abs{ J^{(n)}(t)-1}\leq \dfrac{1}{8} \text{ and } \abs{\a_{ij}^{(n)}(t)-\delta_{ij}}\leq \dfrac{1}{8} \text{ in }\Omega.
\end{align}
Then by Theorem \ref{linearthm} and Theorem \ref{linearthm2}, we can construct $(v^r, v^z, v^{\theta}, R, Z, \Theta, q)^{(n+1)}$ as the solution to \eqref{sub21} and \eqref{sub22} with $(\bar v^r, \bar v^z, \bar v^{\theta}, \bar R, \bar Z, \bar \Theta)=(\nu, v^{\theta}, \zeta, \Theta)^{(n)}$. That is,
\begin{equation}
	\begin{cases}
		\partial_t \zeta^{(n+1)}= \nu^{(n+1)}\,\,&\text{in}\,\,\Omega,\\
		%\partial_t Z^{(n+1)}=v^{z(n+1)}\,\,&\text{in}\,\,\Omega,\\
\partial_t \nu^{(n+1)}+\nabla_{\a^{(n)}} q^{(n+1)}-(b_0\cdot\nabla)^2\zeta^{(n+1)}
=\left(\dfrac{(v^{\theta(n)})^2}{ R^{(n)}}- R^{(n)}(b_0\cdot\nabla \Theta^{(n)})^2,0\right)\,\,&\text{in}\,\,\Omega,\\
\Div_{\a^{(n)}}\nu^{(n+1)}=0\,\,&\text{in}\,\,\Omega,\\
q^{(n+1)}=\dfrac{1}{2}\dfrac{ C^{(n)2}(t)}{R^{(n)2}}\,\,&\text{on}\,\,\Gamma,\\
(\zeta^{(n+1)}, \nu^{(n+1)})|_{t=0}=(r, z, v_0^r, v_0^z).\,\,&\text{in}\,\,\Omega.
\end{cases}
	\label{its1}
\end{equation}
and
\begin{equation}
	\begin{cases}
		\partial_t \Theta^{(n+1)}=\dfrac{v^{\theta(n+1)}}{R^{(n+1)}}\,\,&\text{in}\,\,\Omega,\\
	\partial_t v^{\theta(n+1)}-(b_0\cdot\nabla)(R^{(n+1)}b_0\cdot\nabla\Theta^{(n+1)})=-\dfrac{v^{\theta(n)}v^{r(n)}}{R^{(n)}}
	+b_0\cdot\nabla R^{(n)}b_0\cdot\nabla\Theta^{(n)}\,\,&\text{in}\,\,\Omega,\\(\Theta^{(n+1)}, v^{\theta(n+1)})|_{t=0}=(\theta, v_0^{\theta}).
\end{cases}
	\label{its2}
\end{equation}

We define a higher order energy functional
\begin{equation}
\mathfrak{E}^{(n)}(t):=\norm{(v^r, v^z, v^{\theta})^{(n)}(t)}_4^2+\norm{(b_0\cdot\nabla R, b_0\cdot\nabla Z, Rb_0\cdot\nabla \Theta)^{(n)}(t)}_4^2.
\end{equation}
We claim that by taking $T$ sufficiently small (only depends on $M_0$), it holds that
\begin{equation}\label{claim1}
\sup_{t\in[0,T]}\mathfrak{E}^{(n)}(t)\leq 2M_0.
\end{equation}
We shall prove the claim \eqref{claim1} by the induction. First it holds for $n=0,1$. Now suppose that it holds for $ n\le m$ for $m\ge 1$, then we prove that it holds for $n=m+1$. First, it holds from the induction assumption that
\begin{equation}
\norm{\left(\partial_t R^{(m)}, \partial_t Z^{(m)}\right)}_{L^\infty_TH_{r,z}^4}\le P\left(M_0 \right).
\end{equation}
Then from the estimates \eqref{nueq} and \eqref{vtheq}, we obtain
\begin{equation}
\sup_{t\in[0,T]}\mathfrak{E}^{(m+1)}(t)\leq M_0+\sqrt TP\left(\sup_{t\in[0,T]}\mathfrak{E}^{(m+1)}(t)\right)P\left(M_0 \right).
\end{equation}
Taking $T$ sufficiently small (only depends on $M_0$), we conclude the claim \eqref{claim1}.

Now we will prove that the sequence $\{(v^r, v^z, v^{\theta}, R, Z, \Theta, q)^{(n)}\}_{n=0}^\infty$ converges in certain strong norm. Let $n\ge 3$ and denote the differences:
\begin{equation}
\begin{split}
&\tilde \nu^{(n)}=\nu^{(n+1)}-\nu^{(n)}, \quad\tilde v^{\theta}=v^{\theta(n+1)}-v^{\theta(n)},\quad\tilde q^{(n)}=q^{(n+1)}-q^{(n)}, \\&\tilde \zeta^{(n)}=\zeta^{(n+1)}-\zeta^{(n)}, \quad\tilde \Theta^{(n)} =\Theta^{(n+1)}-\Theta^{(n)}.
\end{split}
\end{equation}
Also, we denote
\begin{equation}
\tilde\a^{(n)}=\a^{(n)}-\a^{(n-1)},\quad \tilde C^{(n)}=C^{(n)}-C^{(n-1)}.
\end{equation}
We find that
\begin{equation}
\label{diffe}
\begin{cases}
\partial_t\tilde \zeta^{(n)}=\tilde \nu^{(n)} &\text{in } \Omega\\
\partial_t\tilde \nu^{(n)}_i+\a^{(n)}_{ij}\partial_{a_j}\tilde q^{(n)} =(b_0\cdot \nabla)^2 \tilde \zeta^{(n)}_i-\tilde\a^{(n)}_{ij}\partial_{a_j} q^{(n)}+\tilde I_i^{(n-1)} &\text{in } \Omega,\\
\Div_{\a^{(n)}}\tilde \nu^{(n)}=-\tilde\a^{(n)}_{ij}\partial_{a_j} \nu^{(n)}_i+\dfrac{\tilde R^{(n-1)}v^{r(n)}}{R^{(n)}R^{(n-1)}} &\text{in } \Omega,\\
\tilde q^{(n)}=\dfrac{1}{2}\left(\dfrac{\tilde C^{(n-1)}}{R^{(n)}}-\dfrac{\tilde R^{(n-1)}C^{(n-1)}}{R^{(n)}R^{(n-1)}}\right)\left(\dfrac{C^{(n)}}{R^{(n)}}+\dfrac{C^{(n-1)}}{R^{(n-1)}}\right) &\text{on }\Gamma,\\
 (\tilde \nu, \tilde \zeta)^{(n)}|_{t=0}=0.
\end{cases}
\end{equation}
and
\begin{equation}
\label{diffe2}
	\begin{cases}
		\partial_t \tilde\Theta^{(n)}=\dfrac{\tilde v^{\theta(n)}R^{(n)}-\tilde R^{(n)}v^{\theta(n)}}{R^{(n)}R^{(n+1)}}\,\,&\text{in}\,\,\Omega,\\
	\partial_t \tilde v^{\theta(n)}-(b_0\cdot\nabla)\left(R^{(n+1)}b_0\cdot\nabla\tilde\Theta^{(n)}-\tilde R^{(n)}b_0\cdot\nabla\Theta^{(n)}\right)=\tilde I_{3}^{(n-1)}\,\,&\text{in}\,\,\Omega,\\(\tilde\Theta^{(n)}, \tilde v^{\theta(n)})|_{t=0}=0,
\end{cases}
\end{equation}
where
\begin{equation}
\begin{split}
\tilde I_1^{(n-1)}=&-\dfrac{\tilde R^{(n-1)}(v^{\theta(n)})^2}{R^{(n)}R^{(n-1)}}+\dfrac{\tilde v^{\theta(n-1)}(v^{\theta(n)}+v^{\theta(n-1)})}{R^{(n-1)}}\\&-\tilde R^{(n-1)}(b_0\cdot\nabla\Theta^{(n)})^2-b_0\cdot\nabla \tilde\Theta^{(n-1)}R^{(n)}\dfrac{R^{(n-1)}}{R^{(n)}}(b_0\cdot\nabla\Theta^{(n-1)}+b_0\cdot\nabla\Theta^{(n)}),\\
\tilde I_2^{(n-1)}=& 0,\\
\tilde I_3^{(n-1)}=&-\dfrac{\tilde v^{\theta(n-1)}v^{r(n)}+v^{\theta(n-1)}\tilde v^{r(n-1)}}{R^{(n)}}+\dfrac{\tilde R^{(n-1)}v^{\theta(n-1)}v^{r(n-1)}}{R^{(n)}R^{(n-1)}}\\
	&+b_0\cdot\nabla \tilde R^{(n-1)}b_0\cdot\nabla\Theta^{(n)}+b_0\cdot\nabla R^{(n-1)}b_0\cdot\nabla\tilde \Theta^{(n-1)}.
\end{split}
\end{equation}

We will now estimate the differences in $H^3_{r,z}$ norm. We will use a similar strategy which was used in Sections \ref{apest} and \ref{epes}, and we divide our estimates into several steps.

{\it Step 1: preliminary estimates of $\tilde\a^{(n)}$ and $\tilde C^{(n)}$.}  First, note that
\begin{equation}
\begin{split}
\quad\tilde\a^{(n)}_{ij}(t) &= \int_0^t\partial_t\left (\a^{(n)}_{ij}-\a^{(n-1)}_{ij}\right )\,d\tau
 \\&=\int_0^t \bigg(\tilde\a^{(n)}_{i\ell}\left(\partial_t\partial_{a_\ell}\zeta^{(n)}_i\a^{(n)}_{ij}\right)
+\a^{(n-1)}_{i\ell}\left(\partial_t\partial_{a_\ell}\tilde \zeta_i^{(n-1)}\a^{(n)}_{ij}\right)\\
&\quad+\a^{(n-1)}_{i\ell}\left(\partial_t\partial_{a_\ell}\zeta_i^{(n-1)}\tilde\a^{(n)}_{ij}\right) \bigg)\,d\tau.
\end{split}
\end{equation}
Hence, we have, by the first equation of \eqref{diffe},
\begin{equation}\label{dest1}
\begin{split}
\norm{\tilde\a^{(n)}_{ij}(t)}_2^2
&\le P(M_0)T^2\left( \norm{\tilde\a^{(n)}_{ij}}_{L^{\infty}_TH_{r,z}^2}^2+\norm{\partial_t \tilde \zeta^{(n-1)}}_{L^{\infty}_TH_{r,z}^3}^2\right)
\\&\le P(M_0)T^2\left(\norm{\tilde\a^{(n)}_{ij}}_{L^{\infty}_TH_{r,z}^2}^2+\norm{\tilde \nu^{(n-1)}}_{L^{\infty}_TH_{r,z}^3}^2\right).
\end{split}
\end{equation}
Next, we estimate $\tilde C^{(n)}(t)$. Recalling the definition of $C^{(n)}(t), C^{(n-1)}(t)$ and using Taylor's formula, we have
\begin{equation}
\begin{split}
\tilde C^{(n)}(t)&=C(0)(e^{\int_0^tA^{(n)}(\tau)\,d\tau}-e^{\int_0^tA^{(n-1)}(\tau)\, d\tau})\\&=C(0)e^{\alpha(t)}\int_0^t\left(A^{(n)}- A^{(n-1)}\right)\,d\tau\\&\leq C(0)e^{\alpha(t)}\int_0^tC(M_0)\norm{\tilde \nu^{(n-1)},\tilde R^{(n-1)}}_2^2
\end{split}
\end{equation}
where $$\sup_{t\in[0,T]}\abs{\alpha(t)}\leq \sup_{t\in[0,T]}\abs{A^{(n)}(t)}+\sup_{t\in[0,T]}\abs{A^{(n-1)}(t)}\leq C(M_0).$$
Consequently, we have
\begin{equation}
\label{cest}
\sup_{t\in [0,T]}\abs{\tilde C^{(n)}(t)}^2\leq P(M_0)T^2\norm{\tilde \nu^{(n-1)}, \tilde R^{(n-1)}}_2^2.
\end{equation}
%Consequently, combining \eqref{in1}, we get
%\begin{equation}
%\label{dest1}
%\norm{\tilde\a^{(n)}(t)}_2^2\le P(M_0) T^2\left(\norm{\left(\tilde\a^{(n)}, \tilde\a^{(n-1)}\right)}_{L^{\infty}_TH^2}^2+\norm{\left(\tilde v^{(n-1)}, \tilde v^{(n-2)}, \tilde\eta^{(n-2)}\right)}_{L^{\infty}_TH^3}^2\right).
%\end{equation}

{\it Step 2: estimates of $\tilde \zeta^{(n)}$.}  By the first equation of \eqref{diffe}, we have
\begin{equation}
\label{dest2}
\norm{\tilde \zeta^{(n)}(t)}_3^2\leq T^2\norm{\tilde \nu^{(n)}}_{L^{\infty}_TH_{r,z}^3}^2.
\end{equation}

{\it Step 3: estimates of $\tilde q^{(n)}$.} Applying $J^{(n)}\Div_{\a^{(n)}}$ to the second equation of \eqref{diffe}, we have
\begin{equation}
\begin{split}
&\dfrac{1}{R^{(n)}}\partial_{a_\ell}(R^{(n)}J^{(n)}\a^{(n)}_{i\ell}\a^{(n)}_{ij}\partial_{a_j} \tilde q^{(n)})\\=&-J^{(n)}\partial_t\a^{(n)}_{i\ell}\partial_{a_\ell} \tilde \nu^{(n)}_i+\dfrac{J^{(n)}v^{r(n)}\tilde v^{r(n)}}{R^{(n)2}}-J^{(n)}\partial_t\left(\tilde\a^{(n)}_{i\ell}\partial_{a_\ell} \nu^{(n)}_i+\dfrac{\tilde R^{(n-1)}v^{r(n)}}{R^{(n)}R^{(n-1)}}\right)
\\&-J^{(n)}\a^{(n)}_{i\ell}\partial_{a_\ell}\left(\tilde\a^{(n)}_{ij}\partial_{a_j}q^{(n)}\right)-\dfrac{J^{(n)}}{R^{(n)}}\tilde\a^{(n)}_{1j}\partial_{a_j}q^{(n)}
+\dfrac{J^{(n)}}{R^{(n)}}\left((b_0\cdot\nabla)^2\tilde R^{(n)}\right)\\&+\a^{(n)}_{i\ell}\partial_{a_\ell}\left((b_0\cdot\nabla)^2\tilde \zeta^{(n)}\right)+\left(\dfrac{J^{(n)}}{R^{(n)}}+J^{(n)}\a^{(n)}_{1\ell}\partial_{a_\ell}\right)\tilde I_1^{(n-1)}.
\end{split}
\end{equation}
By the similar arguments in Section \ref{pressure1}, we obtain
\begin{equation}
\label{dpr}
\begin{split}
\norm{\tilde q^{(n)}}_3^2\le P(M_0) \bigg( &\norm{\left( \tilde \nu^{(n)},b_0\cdot\nabla\tilde \zeta^{(n)}, \tilde R^{(n-1)}, \tilde v^{\theta(n-1)}, R^{(n)}b_0\cdot\nabla\tilde\Theta^{(n-1)}\right) }_{3}^2+\norm{\tilde\a^{(n)}}_{2}^2\bigg).
\end{split}
\end{equation}

{\it Step 4: tangential energy estimates.} We shall again use Alinac's good unknowns:
\begin{equation*}
\begin{split}
\mathcal V^{(n+1)}=\bp^3\nu^{(n+1)}-\bp^3\zeta_i^{(n)}\a^{(n)}_{ij}\partial_{a_j} \nu^{(n+1)},\quad
\mathcal Q^{(n+1)}=\bp^3q^{(n+1)}-\bp^3\zeta_i^{(n)}\a^{(n)}_{ij}\partial_{a_j} q^{(n+1)}.
\end{split}
\end{equation*}
We denote $\tilde{\mathcal V}^{(n)}=\mathcal V^{(n+1)}-\mathcal V^{(n)}, \tilde{\mathcal Q}^{(n)}=\mathcal Q^{(n+1)}-\mathcal Q^{(n)}$, then we have that
\begin{equation}
\label{p1}
\partial_t\tilde{\mathcal V}^{(n)}_i+\a^{(n)}_{ij}\partial_{a_j} \tilde{\mathcal Q}^{(n)}-(b_0\cdot\nabla)\bp^3(b_0\cdot\nabla\tilde \zeta_i^{(n)})=-\tilde\a^{(n)}_{ij}\partial_{a_ j}\mathcal Q^{(n)}+ f_i^{(n)}
\end{equation}
and
\begin{equation*}
\Div_{\a^{(n)}} \tilde{\mathcal V}=-\tilde\a^{(n)}_{ij}\partial_{a_j} \mathcal V_i^{(n)}+g^{(n)},
\end{equation*}
where
\begin{align*}
f_i^{(n)}=&\partial_t\left(\bp^3{\tilde\zeta^{(n-1)}}_m\a^{(n)}_{mj}\partial_{a_j}\nu^{(n+1)}_i
+\bp^3\zeta^{(n-1)}_m\tilde\a^{(n)}_{mj}\partial_{a_j}\nu^{(n+1)}_i+\bp^3\zeta^{(n-1)}_m\a^{(n-1)}_{mj}\partial_{a_j}\tilde \nu^{(n)}_i\right)
\\&+\tilde\a^{(n)}_{mj}\partial_{a_j}\left(\a^{(n)}_{i\ell}\partial_{a_\ell} q^{(n+1)}\right)\bp^3\zeta^{(n)}_m
+\a^{(n-1)}_{mj}\partial_{a_j}\left(\tilde\a^{(n)}_{i\ell}\partial_{a_\ell} q^{(n+1)}\right)\bp^3\zeta^{(n)}_m
\\&+\a^{(n-1)}_{mj}\partial_{a_j}\left(\a^{(n-1)}_{i\ell}\partial_{a_\ell}\tilde q^{(n)}\right)\bp^3\zeta^{(n)}_m+\a^{(n-1)}_{mj}\partial_{a_j}\left(\a^{(n-1)}_{i\ell}\partial_{a_\ell} q^{(n)}\right)\bp^3\tilde\zeta^{(n-1)}_m
\\&-\left[\bp^{2},\tilde\a^{(n)}_{mj}\a^{(n)}_{i\ell}\bp\right]\partial_{a_\ell}\zeta^{(n)}_m\partial_{a_j} q^{(n+1)}
-\left[\bp^{2},\a^{(n-1)}_{mj}\tilde\a^{(n)}_{i\ell}\bp\right]\partial_{a_\ell}\zeta^{(n)}_m\partial_{a_j} q^{(n+1)}
\\
&-\left[\bp^{2},\a^{(n-1)}_{mj}\a^{(n-1)}_{i\ell}\bp\right]\partial_{a_\ell}\tilde\eta^{(n-1)}_m\partial_{a_j} q^{(n+1)}-\left[\bp^{2},\a^{(n-1)}_{mj}\a^{(n-1)}_{i\ell}\bp\right]\partial_{a_\ell}\zeta^{(n-1)}_m\partial_{a_j} \tilde q^{(n)}
\\
&-\left[\bp^3, \tilde\a^{(n)}_{ij},\partial_{a_j}\right] q^{(n+1)}-\left[\bp^3, \a^{(n-1)}_{ij},\partial_{a_j}\right] \tilde q^{(n)}+\left[\bp^3, b_0\cdot\nabla\right]b_0\cdot\nabla\tilde\zeta^{(n)}_i+\bp^3\tilde I_i^{(n-1)}
\end{align*}
and
\begin{align*}
g^{(n)}=&-\left[\bp^2, \tilde\a^{(n)}_{mj}\a^{(n)}_{i\ell}\bp\right]\partial_{a_\ell}\zeta^{(n)}_m\partial_{a_j} \nu^{(n+1)}_i-\left[\bp^2, \a^{(n-1)}_{mj}\tilde\a^{(n)}_{i\ell}\bp\right]\partial_{a_\ell}\zeta^{(n)}_m\partial_{a_j}\nu^{(n+1)}_i
\\&-\left[\bp^2, \a^{(n-1)}_{mj}\a^{(n-1)}_{i\ell}\bp\right]\partial_{a_\ell}\tilde\zeta^{(n-1)}_m\partial_{a_j}\nu^{(n+1)}_i-\left[\bp^2, \a^{(n-1)}_{mj}\a^{(n-1)}_{i\ell}\bp\right]\partial_{a_\ell}\zeta^{(n-1)}_m\partial_{a_j} \tilde\nu^{(n)}_i
\\&-\left[\bp^3, \tilde\a^{(n)}_{ij},\partial_{a_j}\right]\nu^{(n+1)}_i-\left[\bp^3, \a^{(n-1)}_{ij},\partial_{a_j}\right]\tilde \nu^{(n)}_i+\tilde\a^{(n)}_{mj}\partial_{a_j}\left(\a^{(n)}_{i\ell}\partial_{a_\ell}\nu^{(n+1)}_i\right)\bp^3\zeta^{(n)}_m
\\
&+\a^{(n-1)}_{mj}\partial_{a_j}\left(\tilde\a^{(n)}_{i\ell}\partial_{a_\ell}\nu^{(n+1)}_i\right)\bp^3\zeta^{(n)}_m+\a^{(n-1)}_{mj}\partial_{a_j}\left(\a^{(n-1)}_{i\ell}\partial_{a_\ell}\tilde\nu^{(n)}_i\right)\bp^3\zeta^{(n)}_m\\&+\a^{(n-1)}_{mj}\partial_{a_j}\left(\a^{(n-1)}_{i\ell}\partial_{a_\ell} \nu^{(n)}_i\right)\bp^3\tilde\zeta^{(n-1)}_m+\bp^3\dfrac{\tilde R^{(n-1)}v^{r(n)}}{R^{(n)}R^{(n-1)}}.
\end{align*}
Then we find
\begin{equation}\label{ttttt1}
\begin{split}
 &\dfrac{1}{2}\dfrac{d}{dt}\int_{\Omega}\abs{\tilde{\mathcal V}^{(n)}}^2+\int_{\Omega} \nabla_{\a^{(n)}} \tilde{\mathcal Q}^{(n)}\cdot \tilde{\mathcal V}^{(n)} +\int_{\Omega} \bp^3\left(b_0\cdot\nabla\tilde\zeta^{(n)}_i\right)   b_0\cdot\nabla \tilde{\mathcal V}^{(n)}_i
 \\&\quad=\int_{\Omega} \left(f_i^{(n)}-\tilde\a^{(n)}_{ij}\partial_{a_j}\mathcal Q^{(n)}\right)\tilde{\mathcal V}^{(n)}_i.
 \end{split}
\end{equation}

We estimate in an elementary way as usual to deduce that
 \begin{align}
 \label{hg}
&\int_{\Omega} \left(f_i^{(n)}-\tilde\a^{(n)}_{ij}\partial_{a_j}\mathcal Q^{(n)}\right)\tilde{\mathcal V}^{(n)}_i
\\& \quad\le P(M_0) \bigg(
 \norm{\left(\tilde \nu^{(n)}, b_0\cdot\nabla\tilde\zeta^{(n)}, \tilde \nu^{(n-1)}, \tilde\zeta^{(n-1)}, \tilde v^{\theta(n-1)}, R^{(n)}b_0\cdot\nabla\tilde\Theta^{(n-1)}\right)}_{3}^2+\norm{\tilde\a^{(n)}}_{2}^2\bigg).
\nonumber
 \end{align}
By the integration by parts, recalling the boundary condition for $\tilde q^{(n)}, q^{(n)}$ and using the estimate \eqref{cest} and \eqref{dpr}, we obtain
\begin{align}
\nonumber
& \int_{\Omega} \nabla_{\a^{(n)}} \tilde{\mathcal Q}^{(n)}\cdot \tilde{\mathcal V}^{(n)}\\\nonumber&\quad =2\pi\int_{\mathbb T}R_0\tilde{\mathcal Q}^{(n)}\a^{(n)}_{i1}\tilde{\mathcal V}^{(n)}_i \,dz-\int_{\Omega} \tilde{\mathcal Q}^{(n)} \Div_{\a^{(n)}}\tilde{\mathcal V}^{(n)}-\int_{\Omega}\pa_{a_\ell} \left(\dfrac{r}{R^{(n)}}\a^{(n)}_{i\ell} \right)\tilde{\mathcal Q}^{(n)}\dfrac{R^{(n)}}{r}\tilde{\mathcal V}^{(n)}_i \\\nonumber&\quad=
- \int_{\mathbb T} R_0\left(\bp^3\tilde q^{(n)}-\bp^3\zeta_m^{(n-1)}\a_{mj}^{(n)}\partial_{a_j}\tilde q^{(n)}\right)\a^{(n)}_{i1} \tilde{\mathcal V}^{(n)}_i\,dz\\\nonumber&\qquad+\int_{\mathbb T} R_0\left(\bp^3\tilde\zeta_m^{(n-1)}\a_{mj}^{(n)}\partial_{a_j}q^{(n+1)}+\bp^3\zeta_m^{(n-1)}\tilde\a_{mj}^{(n)}\partial_{a_j}q^{(n+1)}\right) \a^{(n)}_{i1} \tilde{\mathcal V}^{(n)}_i \,dz\\\nonumber&\qquad+\int_{\Omega} \left(\tilde{\mathcal{Q}}^{(n)}\tilde\a^{(n)}_{i\ell}\partial_{a_\ell}\mathcal V^{(n)}_i-\tilde{\mathcal Q}^{(n)}g^{(n)}-\pa_{a_\ell} \left(\dfrac{r}{R^{(n)}}\a^{(n)}_{i\ell} \right)\tilde{\mathcal Q}^{(n)} \dfrac{R^{(n)}}{r}\tilde{\mathcal V}^{(n)}_i\right)
\\&\nonumber\quad\le P(M_0) \left(\abs{\bp^2(b_0\cdot\nabla\tilde\zeta)^{(n-1)}+\tilde C^{(n-1)}}_{1/2}\abs{\tilde{\mathcal V}^{(n)}}_{-1/2}+\norm{\tilde\a^{(n)}}_2\abs{\tilde{\mathcal V}^{(n)}}_{-1/2}\right.
\\&\nonumber\qquad\qquad\qquad\left.+\norm{\left(\tilde \nu^{(n)}, b_0\cdot\nabla\tilde\zeta^{(n)},\tilde\zeta^{(n-1)},\tilde v^{\theta(n-1)}, R^{(n)}b_0\cdot\nabla\tilde\Theta^{(n-1)}\right)}_{3}^2+\norm{\tilde\a^{(n)}}_{2}^2\right)
\\&\quad\le P(M_0)\left(\norm{\left(\tilde \nu^{(n)}, b_0\cdot\nabla\tilde\zeta^{(n)},\tilde\nu^{(n-1)},\tilde\zeta^{(n-1)},b_0\cdot\nabla\tilde\zeta^{(n-1)},\tilde v^{\theta(n-1)}, R^{(n)}b_0\cdot\nabla\tilde\Theta^{(n-1)}\right)}_{3}^2\right.
\\&\nonumber\qquad\qquad\qquad\left.+\norm{\tilde\a^{(n)}}_{2}^2\right).
\label{hg2}
\end{align}
By using the first equation of \eqref{diffe}, we have
\begin{equation}
\begin{split}
 &\int_{\Omega}  \bp^3(b_0\cdot\nabla\tilde\zeta^{(n)}_i)    b_0\cdot\nabla \tilde{\mathcal V}^{(n)}_i
\\ &\quad=\hal \dfrac{d}{dt}\int_{\Omega}\abs{\bp^3(b_0\cdot\nabla\tilde\eta^{(n)})}^2
\\&\qquad+\int_{\Omega} \bp^3(b_0\cdot\nabla\tilde\zeta^{(n)}_i) \left([\bp^3,b_0\cdot\nabla] \tilde \nu^{(n)}_i+b_0\cdot\nabla\left(\bp^3\tilde\zeta^{(n-1)}_m\a^{(n)}_{mj}\partial_{a_j}\nu^{(n+1)}_i\right)\right)
\\&\qquad+\int_{\Omega} \bp^3(b_0\cdot\nabla\tilde\zeta^{(n)}_i) b_0\cdot\nabla\left(
\bp^3\zeta^{(n-1)}_m\tilde\a^{(n)}_{mj}\partial_{a_j}\nu^{(n+1)}_i+\bp^3\zeta^{(n-1)}_m\a^{(n-1)}_{mj}\partial_{a_j} \tilde \nu^{(n)}_i\right)
\\ &\quad\ge\hal \dfrac{d}{dt}\int_{\Omega}\abs{\bp^3(b_0\cdot\nabla\tilde\zeta^{(n)})}^2
- P(M_0)\bigg(\norm{\left(\tilde \nu^{(n)},b_0\cdot\nabla\tilde\zeta^{(n)}, b_0\cdot\nabla\tilde\zeta^{(n-1)}\right)}_3^2
+\norm{\tilde\a^{(n)}}_2^2\bigg).
\end{split}
\label{hg3}
\end{equation}
Hence, integrating \eqref{ttttt1} in time directly  and then using the estimates \eqref{hg}--\eqref{hg3}, we obtain
\begin{align*}
\nonumber&\norm{\tilde{\mathcal V}^{(n)}(t)}_0^2+\norm{\bp^3 (b_0\cdot\nabla \tilde\zeta^{(n)})(t)}_0^2
\\&\leq P(M_0) T^2\left(\norm{\left(\tilde \nu^{(n)}, b_0\cdot\nabla\tilde\zeta^{(n)},\tilde\nu^{(n-1)},\tilde\zeta^{(n-1)},b_0\cdot\nabla\tilde\zeta^{(n-1)},\tilde v^{\theta(n-1)}, R^{(n)}b_0\cdot\nabla\tilde\Theta^{(n-1)}\right)}_{L^{\infty}_TH_{r,z}^3}^2\right.
\\&\nonumber\qquad\qquad\qquad\left.+\norm{\tilde\a^{(n)}}_{L^{\infty}_TH_{r,z}^2}^2\right).
\end{align*}
By the definition of $\tilde{\mathcal{V}}^{(n)}$, we have
\begin{align}\label{dest3}
\nonumber&\norm{\bp^3\tilde \nu^{(n)}(t)}_0^2+\norm{\bp^3(b_0\cdot\nabla\tilde\zeta^{(n)})(t)}_0^2\\&\leq P(M_0) T^2\left( \norm{\left(\tilde \nu^{(n)}, b_0\cdot\nabla\tilde\zeta^{(n)},\tilde\nu^{(n-1)},\tilde\zeta^{(n-1)},b_0\cdot\nabla\tilde\zeta^{(n-1)},\tilde v^{\theta(n-1)}, R^{(n)}b_0\cdot\nabla\tilde\Theta^{(n-1)}\right)}_{L^{\infty}_TH^3_{r,z}}^2\right.
\\&\qquad\qquad\qquad\left.+\norm{\tilde\a^{(n)}}_{L^{\infty}_TH^2_{r,z}}^2\right).\nonumber
\end{align}

{\it Step 5: curl and divergence estimates.} It follows from \eqref{diffe} that
\begin{align*}
&\Div_{\a^{(n)}}\tilde \nu^{(n)}=-\tilde\a^{(n)}_{ij}\partial_{a_j} \nu^{(n)}_i+\dfrac{\tilde R^{(n-1)}v^{r(n)}}{R^{(n)}R^{(n-1)}},\\
&\partial_t\left(\Div_{\a^{(n)}}b_0\cdot\nabla\tilde\zeta^{(n)}\right)=h^{(n)},
\\&
 \partial_t\left(\curl_{\a^{(n)}}\tilde \nu^{(n)}\right)-b_0\cdot\nabla\left(\curl_{\a^{(n)}}\left(b_0\cdot\nabla \tilde\zeta^{(n)}\right)\right) =\phi^{(n)},
\end{align*}
where
\begin{equation*}\nonumber
\begin{split}
h^{(n)}=&\partial_t \a^{(n)}_{ij}\partial_{a_j} \left(b_0\cdot\nabla\tilde\zeta^{(n)}_i\right)+\left[\Div_{\a^{(n)}}, b_0\cdot\nabla\right] \tilde\nu^{(n)}-b_0\cdot\nabla\left(\tilde\a^{(n)}_{ij}\partial_{a_j} \nu^{(n)}_i-\dfrac{\tilde R^{(n-1)}v^{r(n)}}{R^{(n)}R^{(n-1)}}\right),
\\
\phi^{(n)}=&\left(\left[\epsilon_{3j\ell}\a^{(n)}_{jm}\partial_{a_m}, b_0\cdot\nabla\right](b_0\cdot\nabla \tilde\zeta^{(n)}_{\ell})\right)+\epsilon_{3j\ell} \partial_t\a^{(n)}_{jm} \partial_{a_m}\tilde \nu^{(n)}_\ell-\epsilon_{3j\ell} \tilde\a^{(n)}_{jm} \partial_t\partial_{a_m}\tilde \nu^{(n)}_\ell\\&+b_0\cdot\nabla\left(\epsilon_{3j\ell}\tilde\a^{(n)}_{jm} \partial_{a_m}\left(b_0\cdot\nabla\zeta^{(n)}_\ell\right)\right)+\epsilon_{3j\ell}\left[\tilde\a^{(n)}_{jm}\partial_{a_m}, b_0\cdot\nabla\right]b_0\cdot\nabla\zeta^{(n)}_\ell\\&+\epsilon_{3j\ell}\a^{(n)}_{jm}\partial_{a_m}\tilde I_{\ell}^{(n-1)}+\epsilon_{3j\ell}\tilde\a^{(n)}_{jm}\partial_{a_m}\left(\dfrac{(v^{\theta(n-1)})^2}{R^{(n-1)}}-R^{(n-1)}(b_0\cdot\nabla\Theta^{(n-1)})^2\right)
\end{split}
\end{equation*}
where $\epsilon_{3j\ell}=0$ for $j=\ell$ and $\epsilon_{312}=1, \epsilon_{321}=-1$.
Then following a similar argument in Section \ref{curle}, we can obtain%By following the estimates in subsection \ref{curle}, we obtain
\begin{align}
\label{dest4}
&\norm{\left(\Div \tilde\nu^{(n)} ,\curl \tilde \nu^{(n)},\Div (b_0\cdot\nabla\tilde\zeta^{(n)}) ,\curl(b_0\cdot\nabla\tilde\zeta^{(n)})\right)(t)}_2^2\nonumber\\&\leq P(M_0) T^2\left( \norm{\left(\tilde \nu^{(n)}, b_0\cdot\nabla\tilde\zeta^{(n)},\tilde\nu^{(n-1)},\tilde\zeta^{(n-1)},b_0\cdot\nabla\tilde\zeta^{(n-1)},\tilde v^{\theta(n-1)}, R^{(n)}b_0\cdot\nabla\tilde\Theta^{(n-1)}\right)}_{L^{\infty}_TH_{r,z}^3}^2\right.
\\&\quad\qquad\qquad\qquad\left.+\norm{\tilde\a^{(n)}}_{L^{\infty}_TH_{r,z}^2}^2\right).\nonumber
\end{align}
{\it Step 6: estimates of $(\tilde \nu^{(n)}, b_0\cdot\nabla\tilde\zeta^{(n)})$.} Finally, summing up the estimates \eqref{dest1}, \eqref{dest2}, \eqref{dpr}, \eqref{dest3}, and \eqref{dest4}, by using the similar argument used in Proposition \ref{fefe}, we obtain
\begin{equation}
\label{dp23}
\begin{split}
&\norm{\left(\tilde v^{r(n)}, \tilde v^{z(n)}, \tilde R^{(n)}, \tilde Z^{(n)}, b_0\cdot\nabla\tilde R^{(n)}, b_0\cdot\nabla\tilde Z^{(n)}\right)(t)}_3^2+\norm{\tilde\a^{(n)}(t)}_2^2\\&\leq P(M_0) T^2\left( \norm{\left(\tilde \nu^{(n)}, b_0\cdot\nabla\tilde\zeta^{(n)},\tilde\nu^{(n-1)},\tilde\zeta^{(n-1)},b_0\cdot\nabla\tilde\zeta^{(n-1)},\tilde v^{\theta(n-1)}, R^{(n)}b_0\cdot\nabla\tilde\Theta^{(n-1)}\right)}_{L^{\infty}_TH_{r,z}^3}^2\right.
\\&\quad\qquad\qquad\qquad\left.+\norm{\tilde\a^{(n)}}_{L^{\infty}_TH_{r,z}^2}^2\right).
\end{split}
\end{equation}
{\it Step 7: estimates for $\tilde v^{\theta(n)}, b_0\cdot\nabla\tilde\Theta^{(n)}$.} By doing a standard $D^3$ energy estimates for the system \eqref{diffe2}(see Section \ref{epes}), we can obatin
\begin{equation}
\label{thetaest}
\begin{split}
&\norm{\tilde v^{\theta(n)}, R^{(n+1)}b_0\cdot\nabla\tilde \Theta^{(n)}}_3^2+\norm{\tilde\Theta^{(n)}}_2^2\\&\leq P(M_0) T^2\left( \norm{\left(\tilde \nu^{(n)}, b_0\cdot\nabla\tilde\zeta^{(n)},\tilde\nu^{(n-1)},\tilde\zeta^{(n-1)},b_0\cdot\nabla\tilde\zeta^{(n-1)},\tilde v^{\theta(n-1)}, R^{(n)}b_0\cdot\nabla\tilde\Theta^{(n-1)}\right)}_{L^{\infty}_TH_{r,z}^3}^2\right.
\\&\quad\qquad\qquad\qquad\left.+\norm{\tilde\a^{(n)}}_{L^{\infty}_TH_{r,z}^2}^2\right).
\end{split}
\end{equation}
{\it Step 8: synthesis.}
We denote
\begin{equation}\label{normnorm}
\Psi^{(n)}:=\norm{\left(\tilde \nu^{(n)},\tilde\zeta^{(n)}, b_0\cdot\nabla\tilde\zeta^{(n)}, \tilde v^{\theta(n)}, R^{(n+1)}b_0\cdot\nabla\tilde\Theta^{(n)}\right)}_{L^{\infty}_TH_{r,z}^3}^2+\norm{\tilde\a^{(n)}, \tilde\Theta^{(n)}}_{L^{\infty}_TH_{r,z}^2}^2.
\end{equation}
Then by \eqref{dp23}, \eqref{thetaest} and taking $T$ sufficiently small (only depends on $M_0$), we have
\begin{equation}
\Psi^{(n)}\le \frac{1}{8}\left(\Psi^{(n-1)}+\Psi^{(n-2)}\right).
\end{equation}
This implies $\Psi^{(n)}\le P(M_0) 2^{-n}$, which yields the convergence of the sequence \newline $\{(v^r, v^z, v^{\theta}, R, Z, \Theta, q)^{(n)}\}_{n=0}$ to a limit $\{(v^r, v^z, v^{\theta}, R, Z, \Theta, q)\}$ in the norm of \eqref{normnorm} as $n\rightarrow\infty$.
\subsection{Local well-posedness of \eqref{eq:mhd}}
\begin{proof}[Proof of Theorem \ref{mainthm}]
The strong convergence of $\{(v^{(n)}, q^{(n)}, \eta^{(n)})\}$ is more than sufficient to pass to the limit as $n \rightarrow 0$ in \eqref{its1} and \eqref{its2} to produce a solution to  \eqref{eq:mhd}. The estimate \eqref{enesti} follows from \eqref{claim1}, while the uniqueness follows by the exactly same arguments as that of showing the convergence.
\end{proof}

\section*{Acknowledgement:}

The author would like to thank Professor Zhen Lei for his helpful discussion. The author would also like to thank the hospitality of the Shanghai Center of Mathematical Sciences. The author was in supported by NSFC (grant No. 11601305).

\end{document}